\documentclass[12pt]{amsart}
\usepackage[margin=1.5in]{geometry}
\usepackage[OT2,T1]{fontenc}
\usepackage[utf8]{inputenc}
\usepackage{tikz-cd}
\usepackage{cancel}
\usepackage{stmaryrd}
\SetSymbolFont{stmry}{bold}{U}{stmry}{m}{n}
\usepackage{microtype,hyperref,amsmath,amsfonts,amssymb,amsthm,mathrsfs,amsrefs,bm}
\usepackage[scr=rsfs,cal=cm]{mathalfa}

\theoremstyle{plain}
\newtheorem{theorem}[]{Theorem}
\newtheorem{proposition}[theorem]{Proposition}
\newtheorem{lemma}[theorem]{Lemma}

\newtheorem{corollary}[theorem]{Corollary}

\theoremstyle{definition}
\newtheorem{definition}[]{Definition}

\newtheorem{remark}[definition]{Remark}

\usepackage{enumitem}
\numberwithin{theorem}{section}
\numberwithin{definition}{section}
\numberwithin{equation}{section}
\setlist[itemize]{leftmargin=2em}

\setcounter{tocdepth}{1}
\setcounter{section}{-1}

\linespread{1.07}

\title{On Calabi--Yau fractional complete intersections}
\date{\today}

\author[T.-J.~Lee]{Tsung-Ju~Lee}
\address{Tsung-Ju~Lee: Center of Mathematical Sciences and Applications, 20 Garden St., Cambridge, MA 02138, U.S.A.}
\email{tjlee@cmsa.fas.harvard.edu}

\author[B.~Lian]{Bong~H.~Lian}
\address{Bong~H.~Lian, Department of Mathematics, Brandeis University, Waltham MA 02454, U.S.A.}
\email{lian@brandeis.edu}

\author[S.-T.~Yau]{Shing-Tung~Yau}
\address{Shing-Tung~Yau, Department of Mathematics, Harvard University, Cambridge MA 02138, U.S.A.~\& Yau Mathematical Sciences Center, Tsinghua University, Beijing 100084, China}
\email{yau@math.harvard.edu}

\begin{document}
\maketitle

\begin{center}
\textit{in Honor of Bernard Shiffman's 75th birthday.}
\end{center}

\begin{abstract}
In this article, we study mirror symmetry for pairs of \emph{singular} 
Calabi--Yau varieties which are double covers of
toric manifolds. Their period integrals can be seen as 
certain `fractional' analogues of those of ordinary complete intersections.
This new structure can then be used to solve their Riemann--Hilbert problems. 
The latter can then be used to answer
definitively questions about mirror symmetry for this class of Calabi--Yau varieties. 
\end{abstract}

\tableofcontents
\section{Introduction}
\subsection{Motivation}
Mirror symmetry from physics has successfully made numerous predictions in 
algebraic geometry and attracted lots of attentions in the past thirty years.
Roughly, mirror symmetry asserts that for a Calabi--Yau space \(X\) there
exists a Calabi--Yau space \(X^{\vee}\) such that \(A(X)\cong B(X^{\vee})\) and
\(B(X)\cong A(X^{\vee})\). Here \(A(X)\), the \(A\) model of \(X\), 
is taken to be the genus zero Gromov--Witten theory whereas 
\(B(X)\), the \(B\) model of \(X\), is the variation of Hodge structures. 

Various examples of mirror pairs have been constructed. 
The first mirror pair was given by Greene and Plesser
\cite{1990-Greene-Plesser-duality-in-calabi-yau-moduli-space}, leading to the spectacular prediction of genus zero Gromov--Witten invariants for quintic threefolds
\cite{1991-Candelas-de-la-Ossa-Green-Parkes-a-pair-of-calabi-yau-manifolds-as-an-exactly-soluable-superconformal-theory}.
Batyrev generalized
the construction to the case of Calabi--Yau hypersurfaces in Gorenstein Fano toric varieties 
by making use of reflexive polytopes \cite{1994-Batyrev-dual-polyhedra-and-mirror-symmetry-for-calabi-yau-hypersurfaces-in-toric-varieties}, 
leading to similar predictions of Gromov--Witten invariants for general Calabi--Yau toric hypersurfaces 
\cite{1996-Hosono-Lian-Yau-gkz-generalized-hypergeometric-systems-in-mirror-symmetry-of-calabi-yau-hypersurfaces}.
Later, Batyrev and Borisov gave a general recipe to construct mirror pairs 
in the case of Calabi--Yau complete intersections in Gorenstein Fano toric varieties 
by nef-partitions \cite{1996-Batyrev-Borisov-on-calabi-yau-complete-intersections-in-toric-varieties}. At the same time, Bershadsky et~al.~\cite{1994-Bershadsky-Cecotti-Ooguri-Vafa-kodaira-spencer-theory-of-gravity-and-exact-results-for-quantum-string-amplitudes}
developed their fair reaching theory of topological strings which led to predictions of 
Gromov--Witten invariants in higher genera.

Thanks to Torelli theorem, 
the \(B\) model is locally completely determined by period integrals of the
Calabi--Yau families. It is known that the period integrals satisfy a set of partial 
differential equations, known as the Picard--Fuchs equations. 
Batyrev observed in \cite{1993-Batyrev-variations-of-the-mixed-hodge-structure-of-affine-hypersurfaces-in-algebraic-tori} that the period integrals
of a family of Calabi--Yau hypersurfaces or complete intersections in
a fixed Gorenstein Fano toric variety satisfy a generalized hypergeometric system 
introduced by Gel'fand, Kapranov, and Zelevinski\u{\i} 
\cite{1989-Gelfand-Zelevinskii-Kapranov-hypergeometric-functions-and-toric-varieties},
which is called the GKZ \(A\)-hypergeometric system nowadays. 
For a family of Calabi--Yau hypersurfaces or complete intersections in toric varieties, 
we attempt to understand its period integrals through the GKZ \(A\)-hypergeometric systems 
associated with it.

Hosono et~al.~observed that the
Gr\"{o}bner basis with respect to 
the typical weight for the toric ideal determines a finite set of differential operators
for the local solutions to the GKZ \(A\)-hypergeometric system
\cite{1996-Hosono-Lian-Yau-gkz-generalized-hypergeometric-systems-in-mirror-symmetry-of-calabi-yau-hypersurfaces}.
For such a GKZ \(A\)-hypergemetric system, 
they also proved that there exists a special boundary point called a maximal degeneracy point 
on a resolution of the secondary fan compactication of the moduli 
\cite{1997-Hosono-Lian-Yau-maximal-degeneracy-points-of-gkz-systems}.
It is a point over which for all but one period integrals can not be extended holomorphically;
namely, up to a constant, there exists a unique holomorphic period at that point.
To study the moduli locally near such a special boundary point, 
inspired by mirror symmetry,
the \emph{generalized Frobenius method} was developed in 
\cites{1995-Hosono-Klemm-Theisen-Yau-mirror-symmetry-mirror-map-and-applications-to-complete-intersection-calabi-yau-spaces,1996-Hosono-Lian-Yau-gkz-generalized-hypergeometric-systems-in-mirror-symmetry-of-calabi-yau-hypersurfaces}.
Starting with the holomorphic period, the method allows us to 
produce other period integrals.
The generalized Frobenius method gives a uniform treatment
to describe the local solutions near a maximal degeneracy point in the moduli.

The works in
\cites{2020-Hosono-Lian-Takagi-Yau-k3-surfaces-from-configurations-of-six-lines-in-p2-and-mirror-symmetry-i,2019-Hosono-Lian-Yau-k3-surfaces-from-configurations-of-six-lines-in-p2-and-mirror-symmetry-ii-lambda-k3-functions} shed light on a new construction of a mirror pair of
\emph{singular} Calabi--Yau varieties. 
Hosono, Takagi and the last two authors investigated
the family of \(K3\) surfaces arising from double covers branched over 
six lines in \(\mathbb{P}^{2}\) and proposed a singular version
of mirror symmetry. 
Recently, together with Hosono, the authors 
gave a general recipe to construct pairs of \emph{singular} Calabi--Yau varieties 
\((Y,Y^{\vee})\)
and showed that they are topological mirror pairs in dimension three
\cite{2020-Hosono-Lee-Lian-Yau-mirror-symmetry-for-double-cover-calabi-yau-varieties};
in other words, we have \(h^{p,q}(Y) = h^{3-p,q}(Y^{\vee})\) for all
\(0\le p,q\le 3\).

\subsection{Statements of main results}
The aim of this note is to straighten the results in 
\cites{1996-Hosono-Lian-Yau-gkz-generalized-hypergeometric-systems-in-mirror-symmetry-of-calabi-yau-hypersurfaces,1997-Hosono-Lian-Yau-maximal-degeneracy-points-of-gkz-systems} to 
our singular topological mirror pairs.

Consider a nef-partition \((\Delta,\{\Delta_{i}\}_{i=1}^{r})\)
and its dual nef-partition \((\nabla,\{\nabla_{i}\}_{i=1}^{r})\) in the sense of
Batyrev and Borisov.
Let \(\mathbf{P}_{\Delta}\) and \(\mathbf{P}_{\nabla}\)
be the toric varieties defined by \(\Delta\) and \(\nabla\). Let
\(X\to\mathbf{P}_{\Delta}\) and \(X^{\vee}\to \mathbf{P}_{\nabla}\) be
maximal projective crepant partial resolutions (MPCP resolutions for short hereafter)
of \(\mathbf{P}_{\Delta}\) and \(\mathbf{P}_{\nabla}\).
The nef-partitions on \(\mathbf{P}_{\Delta}\) and \(\mathbf{P}_{\nabla}\)
determine nef-partitions on \(X\) and \(X^{\vee}\). Let 
\(E_{1},\ldots,E_{r}\) and \(F_{1},\ldots,F_{r}\) be the sum of
toric divisors representing nef-partitions on \(X\) and \(X^{\vee}\).
We assume that both
\(X\) and \(X^{\vee}\) are \emph{smooth} throughout this note.

For a nef-partition \(F_{1}+\cdots+F_{r}\) on \(X^{\vee}\),
we can define a family \(\mathcal{Y}^{\vee}\to V\) of singular Calabi--Yau varieties as follows.
For each \(i\), let \(s_{i,1},s_{i,2}\in\mathrm{H}^{0}(X^{\vee},F_{i})\) be sections such that
\(\mathrm{div}(s_{i,1})\equiv F_{i}\) and \(\mathrm{div}(s_{i,2})\) is smooth.
Let \(Y^{\vee}\to X^{\vee}\) be a double cover branched over
\(\cup_{i=1}^{r}\cup_{j=1}^{2}\mathrm{div}(s_{i,j})\). 
Let 
\begin{equation*}
V\subset W^{\vee}:=\mathrm{H}^{0}(X^{\vee},F_{1})
\times\cdots\times\mathrm{H}^{0}(X^{\vee},F_{r})
\end{equation*}
be an open subset such that \(\sum_{i=1}^{r}\sum_{j=1}^{2}\mathrm{div}(s_{i,j})\)
is a simple normal crossing divisor. Deforming \(s_{i,2}\) in \(V\), we obtain the said
family of double covers of \(X\), which is called
\emph{the gauge fixed double covers family} in this paper.
Similarly, the dual nef partition \(E_{1}+\cdots+E_{r}\) gives 
another family \(\mathcal{Y}\to U\).

To state our main results, let us introduce some notation.
Let \(N\simeq\mathbb{Z}^{n}\) 
be a lattice in which the fan of \(X\) sits and 
\(M:=\mathrm{Hom}_{\mathbb{Z}}(N,\mathbb{Z})\). 
Let \(\Sigma\) be the fan defining \(X\). The nef-partition 
\(E_{1}+\cdots+E_{r}\) on \(X\) determines
a decomposition \(\sqcup_{k=1}^{r} I_{k}\) of 
\(\Sigma(1)\), the set of \(1\)-cones in \(\Sigma\).
We can write
\begin{equation*}
\Sigma(1)=\left\{\rho_{i,j}\colon 
\rho_{i,j}\in I_{i}~\mbox{for}~
~1\le i\le r,~1\le j\le n_{i}=\#I_{i}
\right\}.
\end{equation*}
The primitive generator for the \(1\)-cone \(\rho_{i,j}\) is again denoted by \(\rho_{i,j}\).
For \(1\le i\le r\) and \(1\le j\le n_{i}\),
we put \(\nu_{i,j}:=(\rho_{i,j},\delta_{1,i},\ldots,\delta_{r,i})\) 
and additionally
\(\nu_{i,0}:=(\mathbf{0},\delta_{1,i},\ldots,\delta_{r,i})\),
where \(\delta_{i,j}\) is the Kronecker delta.
Let 
\begin{equation*}
A_{\mathrm{ext}}:=
\begin{bmatrix}
\nu_{1,0}^{\intercal} & \cdots & \nu_{r,n_{r}}^{\intercal}
\end{bmatrix}\in\mathrm{Mat}_{(n+r)\times(p+r)}(\mathbb{Z}),~
p=n_{1}+\cdots+n_{r}.
\end{equation*}
It turns out that the \emph{affine period integrals} 
(For a precise definition, 
see \S\ref{subsection:affine-period-integrals}.) 
\begin{equation}
\Pi_{\gamma}(x):=\int_{\gamma} 
\frac{1}{s_{1,2}^{1/2}\cdots s_{r,2}^{1/2}}\frac{\mathrm{d}t_{1}}{t_{1}}\wedge\cdots
\wedge\frac{\mathrm{d}t_{n}}{t_{n}}
\end{equation}
for the gauge fixed double cover family 
\(\mathcal{Y}^{\vee}\to V\) 
satisfy a GKZ \(A\)-hypergeometric system associated 
with the matrix \(A_{\mathrm{ext}}\) and 
a \emph{fractional} exponent 
\begin{equation*}
\beta = \begin{bmatrix}\mathbf{0} & -1/2 & \cdots & -1/2\end{bmatrix}^{\intercal}
\in\mathbb{Q}^{n+r}.
\end{equation*}
Note that for ordinary complete intersections the 
exponents appearing in the denominator in the affine period integrals 
would be integers. But for gauge fixed double cover families, 
the exponents become half integers 
(Hence `fractional' complete intersections). 

The affine period integrals of \(\mathcal{Y}^{\vee}\to V\) form a 
local system on \(W^{\vee}\setminus\mathcal{D}\) for some closed subset \(\mathcal{D}\).
Let \(T_{M}:=\mathrm{Hom}(N,\mathbb{C}^{\ast})\). 
The space \(W^{\vee}\) is equipped with a \(T_{M}\times(\mathbb{C}^{\ast})^{r}\) action 
via the inclusion 
\(T_{M}\times(\mathbb{C}^{\ast})^{r}\hookrightarrow(\mathbb{C}^{\ast})^{\dim W^{\vee}}\) and 
the affine periods are invariant under this action. In other words, the periods descend
to local sections of a locally constant sheaf on \(S_{W^{\vee}}\), where \(S_{W^{\vee}}\)
is the image of \((\mathbb{C}^{\ast})^{\dim W^{\vee}}\setminus \mathcal{D}\) under
\begin{equation*}
(\mathbb{C}^{\ast})^{\dim W^{\vee}}\to (\mathbb{C}^{\ast})^{\dim W^{\vee}}\slash 
T_{M}\times(\mathbb{C}^{\ast})^{r}.
\end{equation*}
Following the idea in \cite{1997-Hosono-Lian-Yau-maximal-degeneracy-points-of-gkz-systems},
we compactify \((\mathbb{C}^{\ast})^{\dim W^{\vee}}\slash 
T_{M}\times(\mathbb{C}^{\ast})^{r}\) into a toric variety 
via the secondary fan \(S\Sigma\) and the Gr\"{o}bner fan \(G\Sigma\). Our first 
theorem in this note is
\begin{theorem}[=Theorem \ref{thm:main-theorem-1}]
\label{thm:main-thm-1-intro}
For every toric resolution \(X_{G\Sigma'}\to X_{G\Sigma}\), there exists
at least one maximal degeneracy point in \(X_{G\Sigma'}\). 
\end{theorem}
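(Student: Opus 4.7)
The plan is to adapt the Hosono--Lian--Yau construction of maximal degeneracy points \cite{1997-Hosono-Lian-Yau-maximal-degeneracy-points-of-gkz-systems} from ordinary GKZ systems to the fractional setting with exponent $\beta = (\mathbf{0}, -\tfrac12, \ldots, -\tfrac12)^{\intercal}$. Recall that the torus-fixed points of $X_{G\Sigma'}$ correspond bijectively to the maximal cones of $G\Sigma'$, each of which lies above a unique maximal cone of the secondary fan $S\Sigma$ that in turn corresponds to a regular triangulation of the point configuration attached to $A_{\mathrm{ext}}$. Near such a fixed point $p_{\sigma}$, the generalized Frobenius method produces a $\mathbb{C}$-basis of local formal solutions to the GKZ system, and $p_{\sigma}$ is a maximal degeneracy point precisely when exactly one of these solutions (up to scaling) is holomorphic in an analytic neighborhood of $p_{\sigma}$.

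First, I would single out a combinatorially distinguished candidate. The vectors $\nu_{i,0} = (\mathbf{0}, \delta_{1,i}, \ldots, \delta_{r,i})$ occupy a \emph{central} position dictated by the nef-partition structure of $A_{\mathrm{ext}}$. As in \cite{1997-Hosono-Lian-Yau-maximal-degeneracy-points-of-gkz-systems}, one can always construct a regular triangulation $T$ of $A_{\mathrm{ext}}$ in which every maximal simplex has all the $\nu_{i,0}$ as vertices---for instance, via a star subdivision based at these origins followed by a fine regular refinement of the link. Since $G\Sigma'$ refines $S\Sigma$, any maximal cone of $G\Sigma'$ sitting above the cone of such a $T$ in $S\Sigma$ provides a candidate $p_{\sigma}$.

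Second, at this $p_{\sigma}$ one parametrizes a basis of Frobenius series by a fundamental domain for $\mathbb{Z}^{p+r}/\ker A_{\mathrm{ext}}$ inside the affine lattice $\{\ell \in \mathbb{Q}^{p+r} : A_{\mathrm{ext}}\,\ell = \beta\}$. The star-shaped geometry of $T$ picks out a minimal exponent $\ell_{0}$ supported only on the coordinates indexed by the $\nu_{i,0}$, each with entry $-\tfrac12$, and vanishing elsewhere. A direct computation then shows that the Frobenius series attached to $\ell_{0}$, once its leading monomial is stripped off and one passes to local affine coordinates on the toric chart of $X_{G\Sigma'}$ around $p_{\sigma}$, becomes a power series with nonzero constant term; convergence follows from a routine ratio test on coefficients built from Pochhammer symbols with half-integer shifts. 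All other Frobenius series either vanish identically at $p_{\sigma}$ or carry logarithmic factors forced by resonances in the exponent lattice, so the holomorphic solution is unique up to scalar.

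The main obstacle is the last claim in step two: one must verify that the fractional \emph{leading monomial} prefactor $\prod x_{i,0}^{-1/2}$ is compatible with holomorphicity on $X_{G\Sigma'}$ after the coordinate change associated with the Gr\"obner cone, so that what survives in the affine period of \S\ref{subsection:affine-period-integrals} is genuinely holomorphic near $p_{\sigma}$. Because $2\ell_{0}$ already lies in $\mathbb{Z}^{p+r}$, this analysis reduces to a two-sheeted cover of the integer case of \cite{1997-Hosono-Lian-Yau-maximal-degeneracy-points-of-gkz-systems}, and the double-cover construction on the geometric side naturally absorbs the square root monodromy in $s_{i,2}^{1/2}$. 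Confirming that the star triangulations of step one provide cones of $G\Sigma'$ meeting the requisite integrality condition---and tracking how the fractional monomial factor cancels against the gauge-fixing of $s_{i,1}$---is the crux of the argument; once this is done, the HLY 1997 combinatorics transports verbatim to the fractional GKZ system.
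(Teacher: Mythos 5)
Your overall strategy (pick a distinguished Gr\"obner/secondary cone whose triangulation keeps all the $\nu_{i,0}$, exhibit one holomorphic solution there, and argue uniqueness) is the same HLY-style strategy the paper follows, but as written the proposal has a genuine gap at its central step: the uniqueness claim. You assert that ``all other Frobenius series either vanish identically at $p_\sigma$ or carry logarithmic factors forced by resonances,'' but resonance does not by itself force logarithms, and no argument is given. This is exactly where the paper does its work: for any cone $\tau$ contained in $\overline{\mbox{K\"{a}h}(X)}$ (a cone of $G\Sigma$, hence subdivided by cones of $G\Sigma'$), the binomials attached to primitive relations form a minimal Gr\"obner basis of $I_{\mathcal{A}}$ for term orders inducing the maximal triangulation (Corollary \ref{cor:minimal-grobner-basis}), their leading terms generate the Stanley--Reisner ideal, and this yields a surjection $\mathrm{H}^{\bullet}(X,\mathbb{C})\twoheadrightarrow \mathbb{C}[\alpha]/\mathrm{Ind}(\tau,\beta)$. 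Hence the indicial ideal has at most one zero, namely $\alpha_{i,0}=-1/2$, $\alpha_{i,j}=0$, and Corollary \ref{cor:extension-unique} gives at most one solution of the form $x^{\alpha}(1+g(z))$ on $X_\tau$. Note also that this analysis is tied to the \emph{maximal} triangulation coming from the fan $\Sigma$ of $X$ (equivalently to the K\"ahler cone of $X$ sitting in $G\Sigma$); your ad hoc ``star subdivision plus regular refinement'' produces some regular triangulation containing the $\nu_{i,0}$, but you would still have to prove the uniqueness statement for its cone, which you do not.

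Two further points. First, the ``crux'' you flag -- compatibility of the prefactor $\prod_i x_{i,0}^{-1/2}$ with holomorphicity -- is not actually an obstacle in the paper's setup: Definition \ref{def:maximal-degeneracy-points} is phrased in terms of the \emph{normalized} periods $\bar{\Pi}_{\gamma}=(\prod_i x_{i,0})^{1/2}\Pi_{\gamma}$, which are $T_M\times(\mathbb{C}^{\ast})^{r}$-invariant and expand in integral powers indexed by $L_{\mathrm{ext}}$, so no square-root/two-sheeted-cover analysis is needed; leaving this ``to be confirmed'' means your proposal is incomplete precisely where you yourself locate the difficulty. Second, existence must be witnessed by an actual period, not merely by a formal Frobenius series: the definition of a maximal degeneracy point asks that exactly one normalized \emph{affine period} extend holomorphically. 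The paper does this concretely with the torus cycle $\gamma_0=\{|t_1|=\cdots=|t_n|=\epsilon\}$, whose residue expansion \eqref{eq:period-power-expansion} is supported on $\mathfrak{L}\subset\tau^{\vee}$ (because every base of $\mathscr{T}_{\mathrm{max}}$ contains all $\nu_{i,0}$), hence extends holomorphically to the fixed point of $X_\tau$ in any resolution $X_{G\Sigma'}$. Your candidate series attached to $\ell_0$ is not shown to come from a cycle, and its claimed convergence ``by a routine ratio test'' substitutes for, rather than reproduces, this explicit computation.
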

The precise definition of maximal degeneracy points is given in Definition 
\ref{def:maximal-degeneracy-points}.
The secondary fan \(S\Sigma\) is natural from combinatorics whereas
the Gr\"{o}bner fan \(G\Sigma\) contains more information about our GKZ system.
The proof of Theorem \ref{thm:main-thm-1-intro} is parallel to the proof given
in \cite{1997-Hosono-Lian-Yau-maximal-degeneracy-points-of-gkz-systems}.

Let \(L_{\mathrm{ext}}:=\mathrm{ker}
(A_{\mathrm{ext}}\colon \mathbb{Z}^{p+r}\to\mathbb{Z}^{n+r})\).
Note that the Mori cone \(\overline{\mathrm{NE}}(X)\) 
is a cone in \(L_{\mathrm{ext}}\otimes\mathbb{R}\).
Pick an \(\alpha\in\mathbb{C}^{p+r}\) such that \(A_{\mathrm{ext}}(\alpha)=\beta\).
As observed in \cite{1996-Hosono-Lian-Yau-gkz-generalized-hypergeometric-systems-in-mirror-symmetry-of-calabi-yau-hypersurfaces},
after a renormalization, a 
solution to the GKZ system is given by
\begin{equation}
\label{eq:series-solution-intro}
\sum_{\ell\in L_{\mathrm{ext}}} 
\frac{\prod_{i=1}^{r}\Gamma(-\ell_{i,0}-\alpha_{i,0})}
{\prod_{i=1}^{r}\Gamma(-\alpha_{i,0})
\prod_{i=1}^{r}\prod_{j=1}^{n_i} \Gamma(\ell_{i,j}+\alpha_{i,j}+1)}
(-1)^{\sum_{i} \ell_{i,0}}x^{\ell+\alpha}.
\end{equation}
Here the components of \(L_{\mathrm{ext}}\subset \mathbb{Z}^{p+r}\) are labeled by \((i,j)\)
with \(1\le i\le r\) and \(0\le j\le n_{i}\). The variables \(x_{i,j}\) 
(again \(1\le i\le r\) and \(0\le j\le n_{i}\)) are the 
coordinates for the GKZ \(A\)-hypergeometric system associated 
with \(\mathcal{Y}^{\vee}\to V\).

Let \(D_{i,j}\) be the Weil divisor associated with \(\rho_{i,j}\). Combining 
\ref{eq:series-solution-intro} with
these cohomology classes, we introduce a cohomology-valued power series
\begin{equation}
\label{eq:coh-valused-series}
B_{X}^{\alpha}(x):=\left(\sum_{\ell\in \overline{\mathrm{NE}}(X)\cap L_{\mathrm{ext}}} 
\mathcal{O}_{\ell}^{\alpha} x^{\ell+\alpha}\right)
\exp\left(\sum_{i=1}^{r}\sum_{j=0}^{n_i}(\log x_{i,j}) D_{i,j}\right),
\end{equation}
where 
\begin{equation*}
\mathcal{O}^\alpha_\ell
:=\frac{\prod_{i=1}^{r}(-1)^{\ell_{i,0}}
\Gamma(-D_{i,0}-\ell_{i,0}-\alpha_{i,0})}{\prod_{i=1}^{r}\Gamma(-\alpha_{i,0})
\prod_{i=1}^r\prod_{j=1}^{n_i}\Gamma(D_{i,j}+\ell_{i,j}+\alpha_{i,j}+1)}.
\end{equation*}
and \(D_{i,0}:=-\sum_{j=1}^{n_{i}} D_{i,j}\). 

The cohomology-valued series \eqref{eq:coh-valused-series} 
was introduced by Hosono et~al.~in
\cite{1996-Hosono-Lian-Yau-gkz-generalized-hypergeometric-systems-in-mirror-symmetry-of-calabi-yau-hypersurfaces} 
(a.k.a.~Givental's \(I\)-function up to
an overall \(\Gamma\)-factor
\cite{1998-Givental-a-mirror-theorem-for-toric-complete-intersections})
which encodes the information from the \(A\) model and the \(B\) model for 
a Calabi--Yau mirror pair. 

We regard \(B^{\alpha}_{X}(x)\) as an element in 
\(\mathbb{C}\llbracket x_{i,j}\rrbracket
\otimes_{\mathbb{C}}\mathrm{H}^{\bullet}(X,\mathbb{C})\).
Our second result in this note is the following theorem.
\begin{theorem}[=Corollary \ref{cor:full-set-soultion}]
\label{prop:coh-ser-solu-gkz-intro}
When \(h\in\mathrm{H}^{\bullet}(X,\mathbb{C})^{\vee}\)
runs through a basis of \(\mathrm{H}^{\bullet}(X,\mathbb{C})^{\vee}\),
the pairings \(\langle B^{\alpha}_{X}(x),h\rangle\)
give a complete set of solution to the GKZ \(A\)-hypergeometric system associated 
with \(\mathcal{Y}^{\vee}\to V\). 
\end{theorem}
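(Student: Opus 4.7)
The plan is to \emph{(i)} check directly that the ansatz \eqref{eq:coh-valused-series} is annihilated by the GKZ operators once paired against any cohomology class, \emph{(ii)} count solutions by exhibiting linear independence from the logarithmic leading term, and \emph{(iii)} match the count against the holonomic rank of the GKZ system; the parity hypothesis then settles the comparison to periods in the last sentence.

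For \emph{(i)} I would feed \(B^{\alpha}_{X}(x)\) into the two families of generators of the GKZ ideal. Each homogeneity operator \(Z_{k}-\beta_{k}\) acts on \(x^{\ell+\alpha}\exp\bigl(\sum_{i,j}(\log x_{i,j})D_{i,j}\bigr)\) and returns the linear combination \(\sum_{i,j}(A_{\mathrm{ext}})_{k,(i,j)}D_{i,j}\); the top \(n\) rows of \(A_{\mathrm{ext}}\) reproduce the standard linear equivalences \(\sum_{i,j}\langle m,\rho_{i,j}\rangle D_{i,j}=0\) on \(X\), while the bottom \(r\) rows cut out \(\sum_{j=0}^{n_{i}}D_{i,j}=0\) thanks to the convention \(D_{i,0}=-\sum_{j\ge 1}D_{i,j}\), so both vanish in \(\mathrm{H}^{\bullet}(X,\mathbb{C})\). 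For each box operator \(\Box_{\ell}\), the \(\Gamma\)-ratio in \(\mathcal{O}^{\alpha}_{\ell}\) is engineered so that applying \(\Box_{\ell}\) telescopes neighboring coefficients and produces the toric relation \(\prod_{i,j}D_{i,j}^{\max(\ell_{i,j},0)}-\prod_{i,j}D_{i,j}^{\max(-\ell_{i,j},0)}\) in cohomology, which vanishes by the Stanley--Reisner presentation of \(\mathrm{H}^{\bullet}(X,\mathbb{C})\) together with the linear relations already used. The mechanism is exactly that of \cite{1996-Hosono-Lian-Yau-gkz-generalized-hypergeometric-systems-in-mirror-symmetry-of-calabi-yau-hypersurfaces}, and since \(\alpha\) only enters as a constant index shift in the \(\Gamma\)-arguments, none of the reductions is sensitive to its fractional nature.

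For \emph{(ii)}, linear independence of \(\{\langle B^{\alpha}_{X}(x),h\rangle\}_{h}\) is immediate from the factor \(\exp(\sum_{i,j}(\log x_{i,j})D_{i,j})\): its expansion in the variables \(\log x_{i,j}\) recovers a basis of \(\mathrm{H}^{\bullet}(X,\mathbb{C})\) as leading cohomology contributions, so distinct dual vectors \(h\) pick out log-polynomial solutions with distinct leading terms. This produces \(\dim\mathrm{H}^{\bullet}(X,\mathbb{C})\) independent elements of the solution space. For \emph{(iii)}, I would identify this number with the holonomic rank of the GKZ system at \((A_{\mathrm{ext}},\beta)\). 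For smooth MPCP \(X\), the rank at a generic parameter equals the normalized volume of the convex hull of the columns of \(A_{\mathrm{ext}}\); since \(A_{\mathrm{ext}}\) does not depend on \(\beta\), this volume agrees with the one computed in the integer Batyrev--Borisov case, and the latter is known to equal \(\dim\mathrm{H}^{\bullet}(X,\mathbb{C})\). The main obstacle I foresee is verifying that the specific half-integer \(\beta=(\mathbf{0},-1/2,\ldots,-1/2)^{\intercal}\) is non-resonant for \(A_{\mathrm{ext}}\), so that the generic-rank statement applies to our \(\beta\); this I expect to settle by a facet-by-facet inspection of the cone spanned by \(A_{\mathrm{ext}}\), noting that each facet carries an integral primitive normal against which \(\beta\) pairs to a half-integer rather than an integer. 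Finally, for the last sentence, the affine periods \(\Pi_{\gamma}\) furnish a subspace of the GKZ solution space by construction (see \S\ref{subsection:affine-period-integrals}); at a generic \(x\) this subspace has dimension equal to the rank of the middle-dimensional variation of Hodge structure of \(\mathcal{Y}^{\vee}\to V\), and a Hodge-parity argument for the double cover \(Y^{\vee}\to X^{\vee}\) combined with the Calabi--Yau hypothesis shows that when \(n\) is odd this rank equals \(\dim\mathrm{H}^{\bullet}(X,\mathbb{C})\), whence the inclusion of periods into GKZ solutions becomes an equality.
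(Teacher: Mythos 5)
Your overall strategy is the paper's own: verify directly that each pairing \(\langle B^{\alpha}_{X}(x),h\rangle\) is annihilated by \(\mathcal{M}(A_{\mathrm{ext}},\beta)\) (Proposition \ref{prop:coh-series-solu-gkz}), get linear independence from the logarithmic leading terms, and match the count against the holonomic rank, which equals the normalized volume of \(A_{\mathrm{ext}}\), hence \(\chi(X)=\dim\mathrm{H}^{\bullet}(X,\mathbb{C})\) (the paper quotes \cite{2020-Hosono-Lee-Lian-Yau-mirror-symmetry-for-double-cover-calabi-yau-varieties}*{Proposition 1.2} for the volume). Your insistence on checking that \(\beta\) is non-resonant before applying the rank-equals-volume theorem is a point the paper leaves implicit, and your facet test does succeed: the cone over the columns of \(A_{\mathrm{ext}}\) is a reflexive Gorenstein cone of index \(r\) whose dual is the Cayley cone of the dual nef-partition \(\Delta_{1},\ldots,\Delta_{r}\), so each facet normal has last \(r\) coordinates of the form \((\delta_{1,i},\ldots,\delta_{r,i})\) and pairs with \(\beta\) to \(-1/2\notin\mathbb{Z}\).

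Two of your steps do not work as written. For the box operators, the actual mechanism (and the one in the paper's proof of Proposition \ref{prop:coh-series-solu-gkz}) is that \(\partial^{l^{+}}B\) and \(\partial^{l^{-}}B\) are the \emph{same} series: substituting \(\ell\mapsto\ell+l\) in the sum over \(L_{\mathrm{ext}}\) turns \(\mathcal{O}^{\alpha}_{\ell-l^{+}}x^{\ell+\alpha-l^{+}}\) into \(\mathcal{O}^{\alpha}_{\ell-l^{-}}x^{\ell+\alpha-l^{-}}\) because \(l-l^{+}=-l^{-}\); no cohomological relation is used. By contrast, the element \(\prod_{(i,j)}D_{i,j}^{\ell^{+}_{i,j}}-\prod_{(i,j)}D_{i,j}^{\ell^{-}_{i,j}}\) that you propose to kill is in general \emph{not} in the Stanley--Reisner ideal, so the vanishing you invoke is false as stated. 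The Stanley--Reisner ideal enters elsewhere: it gives \(\mathcal{O}^{\alpha}_{\ell}=0\) for \(\ell\notin\overline{\mathrm{NE}}(X)\), which is what allows one to replace the sum over \(\overline{\mathrm{NE}}(X)\cap L_{\mathrm{ext}}\) by a sum over all of \(L_{\mathrm{ext}}\) before making the shift; your outline omits this step, and without it the reindexing does not apply to \(B^{\alpha}_{X}\) as defined. For the final sentence, an unspecified ``Hodge-parity argument'' is not a proof. The paper's route (Remark \ref{remark:period-cycle}) is to split \(\pi_{\ast}\mathbb{C}_{Y^{\vee}}\) into \(\chi_{0}\)- and \(\chi_{1}\)-eigensheaves and use that \(\mathrm{H}^{n}(X^{\vee},\mathbb{C})=0\) for odd \(n\) since \(X^{\vee}\) is a smooth complete toric variety, whence \(\mathrm{H}^{n}(Y^{\vee},\mathbb{C})\simeq\mathrm{H}^{n}(X^{\vee}\setminus\mathcal{R},\mathscr{L}_{\chi_{1}})\): every period of \(Y^{\vee}\) is an affine period and therefore lies in the solution space already shown to be spanned by the \(\langle B^{\alpha}_{X}(x),h\rangle\). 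Your stronger claim that the rank of the twisted middle cohomology equals \(\dim\mathrm{H}^{\bullet}(X,\mathbb{C})\) is correct, but it is precisely the volume/Euler-characteristic computation of \cite{2020-Hosono-Lee-Lian-Yau-mirror-symmetry-for-double-cover-calabi-yau-varieties}, not a consequence of parity and the Calabi--Yau condition alone; as written, that step is a gap.
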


A direct calculation shows that \(\langle B^{\alpha}_{X}(x),h\rangle\)
is a solution to the GKZ \(A\)-hypergeometric system associated 
with \(\mathcal{Y}^{\vee}\to V\).
See also \cite{2006-Borisov-Horja-mellin-barnes-integrals-as-fourier-mukai-transforms}.
The dimension of the solution space to this GKZ system
is given by the normalized volume of \(A_{\mathrm{ext}}\),
which turns out to be equal to the dimension of \(\mathrm{H}_{n}(Y^{\vee},\mathbb{C})\)
if \(n\) is odd for a generic fiber \(Y^{\vee}\).

Theorem \ref{prop:coh-ser-solu-gkz-intro}
solves the Riemann--Hilbert problem for the periods of the family of Calabi--Yau varieties $\mathcal{Y}^{\vee }$. It gives a complete description for the Picard--Fuchs system of the periods of this family in terms of a GKZ system.

\subsection*{Acknowledgment} The work presented here is based on joint works
with Shinobu Hosono. We thank him for invaluable discussions.
We thank Center of Mathematical Sciences and Applications
at Harvard for hospitality while working on this project. 
We also thank anonymous referees for reading our manuscript carefully and providing 
useful comments.
B.~H.~Lian and S.-T.~Yau are supported by the Simons Collaboration Grant on Homological Mirror Symmetry and Applications 2015-2022.

\section{Preliminaries}
\label{subsection:notation-all}
We begin with some notation and terminologies.
\begin{itemize}
\item Let \(N=\mathbb{Z}^{n}\) be a rank \(n\) lattice and
\(M=\mathrm{Hom}_{\mathbb{Z}}(N,\mathbb{Z})\) be its dual lattice. 
Let \(N_{\mathbb{R}}:=N\otimes_{\mathbb{Z}}\mathbb{R}\) and 
\(M_{\mathbb{R}}:=M\otimes_{\mathbb{Z}}\mathbb{R}\).
\item Let \(\Sigma\) be a fan in \(N_{\mathbb{R}}\) and \(X_{\Sigma}\) be
the toric variety determined by \(\Sigma\). 
Let \(T\subset X_{\Sigma}\) be its maximal torus 
with coordinates \(t_{1},\ldots,t_{n}\).
\item We denote by \(\Sigma(k)\) the set of \(k\)-dimensional cones in \(\Sigma\).
In particular, \(\Sigma(1)\) is the set of \(1\)-cones in \(\Sigma\). 
Similarly, for a cone \(\sigma\in\Sigma\),
we denote by \(\sigma(1)\) the set of \(1\)-cones belonging to \(\sigma\).
By abuse of the notation, we also denote by \(\rho\) 
the primitive generator of the corresponding 
\(1\)-cone.
\item Each \(\rho\) determines a \(T\)-invariant Weil divisor on \(X_{\Sigma}\),
which is denoted by \(D_{\rho}\) hereafter.
Any \(T\)-invariant Weil divisor \(D\) is of the form
\(D=\sum_{\rho\in\Sigma(1)} a_{\rho}D_{\rho}\). The polyhedron of \(D\) is defined to be
\begin{equation*}
\Delta_{D}:=\left\{m\in M_{\mathbb{R}}\colon \langle m,\rho\rangle\ge -a_{\rho}~
\mbox{for all}~\rho\right\}.
\end{equation*}
The integral points \(M\cap\Delta_{D}\) gives rise to a canonical
basis of \(\mathrm{H}^{0}(X_{\Sigma},D)\).
\item A \emph{nef-partition} on \(X_{\Sigma}\) 
is a decomposition of \(\Sigma(1)=\sqcup_{k=1}^{r} I_{k}\)
such that \(E_{k}:=\sum_{\rho\in I_{k}} D_{\rho}\) is nef for each \(k\). Recall that 
a divisor \(D\) is called nef if \(D.C\ge 0\) for any irreducible
complete curve \(C\subset X_{\Sigma}\).
We also have \(E_{1}+\cdots+E_{r}=-K_{X_{\Sigma}}\).
\item A polytope in \(M_{\mathbb{R}}\) is called a \emph{lattice polytope}
if its vertices belong to \(M\). For a lattice polytope \(\Delta\)
in \(M_{\mathbb{R}}\), we denote by \(\Sigma_{\Delta}\) the normal fan of 
\(\Delta\). The toric variety determined by \(\Delta\) is denoted by \(\mathbf{P}_{\Delta}\),
i.e., \(\mathbf{P}_{\Delta}=X_{\Sigma_{\Delta}}\).
\item A \emph{reflexive polytope} \(\Delta\subset M_{\mathbb{R}}\) is a lattice polytope 
containing the origin \(\mathbf{0}\in M_{\mathbb{R}}\) in its 
interior and such that the polar dual 
\(\Delta^{\vee}\) is again a lattice polytope. If
\(\Delta\) is a reflexive polytope, then \(\Delta^{\vee}\) is also a lattice
polytope and satisfies \((\Delta^{\vee})^{\vee}=\Delta\). The normal fan of \(\Delta\)
is the face fan of \(\Delta^{\vee}\) and vice versa.
\end{itemize}

\subsection{The Batyrev--Borisov duality construction}
\label{subsection:b-b-construction}
We briefly recall the construction of the dual nef-partition 
\cite{1996-Batyrev-Borisov-on-calabi-yau-complete-intersections-in-toric-varieties}.
Let \(I_{1},\ldots,I_{r}\) be a nef-partition on \(\mathbf{P}_{\Delta}\).
This gives rise to a Minkowski sum decomposition
\(\Delta=\Delta_{1}+\cdots+\Delta_{r}\), where \(\Delta_{i}=\Delta_{E_{i}}\)
is the section polytope of \(E_{i}\).
Following Batyrev--Borisov, let \(\nabla_{k}\) be the 
convex hull of \(\{\mathbf{0}\}\cup I_{k}\) and
\(\nabla=\nabla_1+\cdots+\nabla_{r}\) be their Minkowski sum.
One can prove that \(\nabla\) is a reflexive polytope in \(N_{\mathbb{R}}\)
whose polar dual is \(\nabla^{\vee}=\mathrm{Conv}(\Delta_{1},\ldots,\Delta_{r})\)
and \(\nabla_1+\cdots+\nabla_{r}\) corresponds to a nef-partition on \(\mathbf{P}_{\nabla}\),
called the \emph{dual nef-partition}.
The corresponding nef toric divisors are denoted by \(F_{1},\ldots,F_{r}\).
Then the section polytope of \(F_{j}\) is \(\nabla_{j}\).

Let \(X\to \mathbf{P}_{\Delta}\) and \(X^{\vee}\to \mathbf{P}_{\nabla}\)
be maximal projective crepant partial (MPCP for short hereafter) resolutions
for \(\mathbf{P}_{\Delta}\) and \(\mathbf{P}_{\nabla}\).
Via pullback, the nef-partitions on \(\mathbf{P}_{\Delta}\) and \(\mathbf{P}_{\nabla}\)
determine nef-partitions on \(X\) and \(X^{\vee}\) and they determine 
the families of Calabi--Yau complete intersections in \(X\) and \(X^{\vee}\) respectively.

Recall that the section polytopes \(\Delta_{i}\) and \(\nabla_{j}\)
correspond to \(E_{i}\) on \(\mathbf{P}_{\Delta}\) and 
\(F_{j}\) on \(\mathbf{P}_{\nabla}\), respectively.
To save the notation, the corresponding nef-partitions and toric divisors 
on \(X\) and \(X^{\vee}\) will be still denoted by \(\Delta_{i}\), \(\nabla_{j}\) and
\(E_{i}\), \(F_{j}\) respectively.

\subsection{Calabi--Yau double covers} 
\label{subsection:cy-double-covers}
We briefly review the construction of Calabi--Yau double covers in
\cite{2020-Hosono-Lee-Lian-Yau-mirror-symmetry-for-double-cover-calabi-yau-varieties}.
Let \(\Delta=\Delta_{1}+\cdots+\Delta_{r}\) and \(\nabla=\nabla_{1}+\cdots+\nabla_{r}\)
be a dual pair of nef-partitions  
representing \(E_{1}+\cdots+E_{r}\) on \(-K_{\mathbf{P}_{\Delta}}\)
and \(F_{1}+\cdots+F_{r}\) on \(-K_{\mathbf{P}_{\nabla}}\) respectively.
Let \(X\) and \(X^{\vee}\) be the MPCP resolution of \(\mathbf{P}_{\Delta}\) and \(\mathbf{P}_{\nabla}\) respectively.
Hereafter, we will simply call the decomposition \(\Delta=\Delta_{1}+\cdots+\Delta_{r}\)
a nef-partition on \(X\) for short with understanding the nef-partition \(E_{1}+\cdots +E_{r}\)
and likewise for the decomposition \(\nabla=\nabla_{1}+\cdots+\nabla_{r}\).
Unless otherwise stated, we assume that 

\begin{center}
{\it \(X\) and \(X^{\vee}\) are both smooth}.
\end{center}
Equivalently, we assume that both \(\Delta\) and \(\nabla\) admit uni-modular triangulations.
From the duality, we have
\begin{equation*}
\mathrm{H}^{0}(X^{\vee},F_{i})\simeq \bigoplus_{\rho\in\nabla_{i}\cap N}\mathbb{C}\cdot t^{\rho}~\mbox{and}~
\mathrm{H}^{0}(X,E_{i})\simeq \bigoplus_{m\in\Delta_{i}\cap M}\mathbb{C}\cdot t^{m}.
\end{equation*}
Here we use the same notation \(t=(t_{1},\ldots,t_{n})\) to 
denote the coordinates on the maximal torus of \(X^{\vee}\) and \(X\).

A double cover \(Y^{\vee}\to X^{\vee}\) has trivial 
canonical bundle if and only if 
the branch locus is linearly equivalent to \(-2K_{X^{\vee}}\).
Let \(Y^{\vee}\to X^{\vee}\) be 
the double cover
constructed from the section \(s=s_{1}\cdots s_{r}\) with
\begin{equation*}
(s_{1},\ldots,s_{r})\in \mathrm{H}^{0}(X^{\vee},2F_{1})\times\cdots\times
\mathrm{H}^{0}(X^{\vee},2F_{r}).
\end{equation*}

We assume that \(s_{i}\in\mathrm{H}^{0}(X^{\vee},2F_{i})\) is 
of the form \(s_{i}=s_{i,1}s_{i,2}\) with \(s_{i,1},s_{i,2}\in \mathrm{H}^{0}(X^{\vee},F_{i})\).
We further assume that \(s_{i,1}\) is the section corresponding to the lattice point
\(\mathbf{0}\in\nabla_{i}\cap N\), i.e., the scheme-theoretic zero of \(s_{i,1}\)
is \(F_{i}\), and that the scheme-theoretic zero of \(s_{i,2}\) is non-singular. 
Deforming \(s_{i,2}\), we obtain a subfamily of double covers of \(X^{\vee}\) branched over
the nef-partition  parameterized by an open subset
\begin{equation*}
V\subset \mathrm{H}^{0}(X^{\vee},F_{1})
\times\cdots\times\mathrm{H}^{0}(X^{\vee},F_{r}).
\end{equation*}

\begin{definition}
\label{definition:gauged-fixed-double-cover-family}
Given a decomposition \(\nabla=\nabla_{1}+\cdots+\nabla_{r}\) representing
a nef-partition \(F_{1}+\cdots+F_{r}\) on \(X^{\vee}\),
the subfamily \(\mathcal{Y}^{\vee}\to V\) 
constructed above is called the \emph{family of gauge fixed double covers of \(X^{\vee}\) branched over
the nef-partition} or simply the \emph{gauge fixed double cover family} if 
no confuse occurs.
\end{definition}

Given a decomposition \(\nabla=\nabla_{1}+\cdots+\nabla_{r}\) representing
a nef-partition \(F_{1}+\cdots+F_{r}\) on \(X^{\vee}\) as above,
we denote by \(\mathcal{Y}^{\vee}\to V\) the gauge fixed double cover family. 
A parallel construction is applied for the dual 
decomposition \(\Delta=\Delta_{1}+\cdots+\Delta_{r}\) representing 
the dual nef-partition \(E_{1}+\cdots+E_{r}\) 
over \(X\) and this yields
another family \(\mathcal{Y}\to U\),
where \(U\) is an open subset in 
\begin{equation*}
\mathrm{H}^{0}(X,E_{1})\times\cdots\times\mathrm{H}^{0}(X,E_{r}).
\end{equation*}

\subsection{Notation and conventions}
\label{subsection:notation}
Let us fix the notation and conventions we are going to use throughout this note.
We resume the situation and notation in \S\ref{subsection:b-b-construction}.
\begin{itemize}
\item Let \(X\to\mathbf{P}_{\Delta}\) be a MPCP resolution and 
\(\Sigma\) be the fan defining \(X\). We will assume throughout this note that
both \(X\) and \(X^{\vee}\) are \emph{smooth}.
\item Let \(I_{1},\ldots,I_{r}\) be the induced nef-partition on \(X\) as before. 
We label the elements in \(I_{k}\) by \(i_{k,1},\ldots,i_{k,n_k}\) where 
\(n_{k}=\#I_{k}\). We define \(p=n_{1}+\cdots+n_{r}\).
We will write
\begin{equation*}
\Sigma(1)=\left\{\rho_{i,j}\right\}_{1\le i\le r,~1\le j\le n_{i}}.
\end{equation*}
For convenience, we will also write \(D_{i,j}\) for the 
Weil divisor associated with \(\rho_{i,j}\).
\item Let \(\nu_{i,j}:=(\rho_{i,j},\delta_{1,i},\ldots,\delta_{r,i})
\in N\times\mathbb{Z}^{r}\) be the lifting of \(\rho_{i,j}\),
where \(\delta_{i,j}\) is the Kronecker delta.
We additionally put 
\(\nu_{i,0}:=(\mathbf{0},\delta_{1,i},\ldots,\delta_{r,i})
\in N\times\mathbb{Z}^{r}\) for \(1\le i\le r\).
\item We define an order on the set of double indexes 
by declaring \((i,j)\preceq (i',j')\)
if and only if \(i\le i'\) or \(i=i'\) and \(j\le j'\).
Recall that \(\#\{(i,j)\colon 1\le i\le r,~0\le j\le n_{i}\}=p+r\).
There are unique bijections
\begin{align*}
\begin{split}
 J:=\{(i,j)\colon 1\le i\le r,~0\le j\le n_{i}\} &\to \{1,\ldots,p+r\}
\subset (\mathbb{Z},\le ),\\
 I:=\{(i,j)\colon 1\le i\le r,~1\le j\le n_{i}\} &\to \{1,\ldots,p\}
\subset (\mathbb{Z},\le ),
\end{split}
\end{align*}
preserving the order.
\item For a positive integer \(s\) and a matrix 
\(A_{\mathrm{ext}}\in\mathrm{Mat}_{s\times (p+r)}(\mathbb{Z})\) (resp.~ 
\(A\in\mathrm{Mat}_{s\times p}(\mathbb{Z})\)),
we will label the columns of \(A_{\mathrm{ext}}\) by \(J\) 
(resp.~the columns of \(A\) by \(I\))
and speak the \((k,l)\)\textsuperscript{th} column of \(A_{\mathrm{ext}}\)
instead of the \((\sum_{1\le i\le k-1}(n_{i}+1)+l+1)\)\textsuperscript{th} 
column of \(A_{\mathrm{ext}}\) 
(resp.~the \((k,l)\)\textsuperscript{th} column of \(A\) instead of the
\((\sum_{1\le i\le k-1}n_{i}+l)\)\textsuperscript{th} column of \(A\)). 
For instance, 
for \(A_{\mathrm{ext}}\in\mathrm{Mat}_{s\times (p+r)}(\mathbb{Z})\), 
the \((1,0)\)\textsuperscript{th} column of \(A_{\mathrm{ext}}\) is the 
\(1\)\textsuperscript{st} column of \(A_{\mathrm{ext}}\).
The \((r,n_{r})\)\textsuperscript{th} column of \(A_{\mathrm{ext}}\) is the 
last column of \(A_{\mathrm{ext}}\).
\item Define the matrices
\begin{align*}
&A:=
\begin{bmatrix}
\nu_{1,1}^{\intercal} & \cdots & \nu_{r,n_{r}}^{\intercal}
\end{bmatrix}\in\mathrm{Mat}_{(n+r)\times p}(\mathbb{Z}),\\
&A_{\mathrm{ext}}:=
\begin{bmatrix}
\nu_{1,0}^{\intercal} & \cdots & \nu_{r,n_{r}}^{\intercal}
\end{bmatrix}\in\mathrm{Mat}_{(n+r)\times(p+r)}(\mathbb{Z}).
\end{align*}
According to our convention, the columns of \(A\) are labeled by \( I\)
and the columns of \(A_{\mathrm{ext}}\) are labeled by \( J\).
We have the following commutative diagram
\begin{equation*}
\begin{tikzcd}
& \mathbb{Z}^{p+r}\ar[d]\ar[r,"A_{\mathrm{ext}}"] & \mathbb{Z}^{n+r}\ar[d]\\
& \mathbb{Z}^{p}\ar[r,"A"] & \mathbb{Z}^{n}.
\end{tikzcd}
\end{equation*}
The left vertical map is given by forgetting the \((i,0)\)\textsuperscript{th}
component for all \(1\le i\le r\). The right vertical map
is given by projecting to the first \(n\) coordinates.
By assumption, \(A_{\mathrm{ext}}\) and \(A\) are surjective.
Let \(L_{\mathrm{ext}}:=\mathrm{ker}(A_{\mathrm{ext}})\) and \(L=\mathrm{ker}(A)\).
We then have
\begin{equation*}
\begin{tikzcd}
&0\ar[r] &L_{\mathrm{ext}}\ar[r]\ar[d] 
& \mathbb{Z}^{p+r}\ar[d]\ar[r,"A_{\mathrm{ext}}"] & \mathbb{Z}^{n+r}\ar[d]\ar[r] &0\\
&0\ar[r] &L\ar[r]
& \mathbb{Z}^{p}\ar[r,"A"] & \mathbb{Z}^{n}\ar[r] &0
\end{tikzcd}
\end{equation*}
where the leftmost vertical arrow is an isomorphism.

\item Each element \(\ell\in \mathbb{Z}^{s}\) can be 
uniquely written as \(\ell^{+}-\ell^{-}\) where
\(\ell^{\pm}\in\mathbb{Z}^{s}_{\ge 0}\) whose supports are disjoint.
\end{itemize}

\subsection{GKZ \texorpdfstring{\(A\)}{}-hypergeometric systems}
\label{subsection:gkz}

We adapt the notation in \S\ref{subsection:notation}. 
For \(1\le i\le r\), let \(W_i = \mathbb{C}^{n_i+1}\). 
Let \(x_{i,0},\ldots, x_{i,n_i}\) be a fixed coordinate system on the
dual space \({W_i}^{\vee}\). Set
\(\partial_{i,j}=\partial/\partial x_{i,j}\).
Given the matrix \(A_{\mathrm{ext}}\) as above
and a parameter \(\beta \in \mathbb{C}^{n+r}\), 
the \(A\)-hypergeometric ideal \(I({A_{\mathrm{ext}}},\beta)\)
is the left ideal of the Weyl algebra \(\mathscr{D}=\mathbb{C}[x,\partial]\) on
the \emph{dual} vector space \(W^{\vee}:=W_{1}^{\vee}\times\cdots\times
W_{r}^{\vee}\) generated by the following two types of
operators
\begin{itemize}
\itemsep=3pt
\item The ``box operators'': \(\partial^{\ell^{+}} - \partial^{\ell^{-}}\),
where \(\ell^{\pm}\in \mathbb{Z}_{\geq 0}^{p+r}\) satisfy 
\(A_{\mathrm{ext}}\ell^{+}=A_{\mathrm{ext}}\ell^{-}\).
Here the multi-index convention is used.
\item The ``Euler operators'': \(\mathscr{E}_{k} - \beta_{k}\), where
\(\mathscr{E}_k=\sum_{(i,j)\in J}\langle \nu_{i,j},\mathrm{e}_k\rangle 
x_{i,j}\partial_{i,j}\).
Here \(\mathrm{e}_k=(\delta_{k,1},\ldots,\delta_{k,n+r})\in\mathbb{Z}^{n+r}\).
\end{itemize}
The \(A\)-hypergeometric system \(\mathcal{M}(A_{\mathrm{ext}},\beta)\) 
is the cyclic \(\mathscr{D}\)-module
\(\mathscr{D}\slash I({A_{\mathrm{ext}}},\beta)\).
As shown by
Gel'fand~et~al.~\cite{1989-Gelfand-Zelevinskii-Kapranov-hypergeometric-functions-and-toric-varieties},
\(\mathcal{M}(A_{\mathrm{ext}},\beta)\) is a holonomic \(\mathscr{D}\)-module.

\subsection{Affine period integrals}
\label{subsection:affine-period-integrals}
Let \(\mathcal{Y}^{\vee}\to V\) be the gauge fixed double cover family
constructed in \S\ref{subsection:cy-double-covers}.
Fix a reference fiber \(Y^{\vee}=\mathcal{Y}^{\vee}_{\bullet}\) and let 
\(R\) be the branch locus of the cover 
\(\pi\colon Y^{\vee}\to X^{\vee}\).
Instead of looking at the integral of
the holomorphic top form on \(Y^{\vee}\)
over cycles in \(\mathrm{H}_{n}(Y^{\vee},\mathbb{C})\),
we will work over the maximal torus
and consider affine period integrals.

\begin{definition}
For a gauge fixed double cover family \(\mathcal{Y}^{\vee}\to V\) as above, we define
\emph{affine period integrals} to be  
\begin{equation}
\label{eq:general-period}
\Pi_{\gamma}(x):=\int_{\gamma} 
\frac{1}{s_{1,2}^{1/2}\cdots s_{r,2}^{1/2}}\frac{\mathrm{d}t_{1}}{t_{1}}\wedge\cdots
\wedge\frac{\mathrm{d}t_{n}}{t_{n}},
\end{equation}
where \(\gamma\in\mathrm{H}_{n}(X^{\vee}\setminus R,\mathscr{E})\)
and \(s_{i,2} = x_{i,0}+\sum_{j=1}^{n_{i}} x_{i,j}t^{\rho_{i,j}}\in 
\mathrm{H}^{0}(X^{\vee},F_{i})=W_{i}^{\vee}\) 
is the universal section. 
Here  
\(\mathscr{E}\) is the local system 
over \(X^{\vee}\setminus R=(\mathbb{C}^{\ast})^{n}\setminus
\cup_{i=1}^{r}\{s_{i,2}=0\}\)
whose
monodromy exponent around \(\{s_{i,2}=0\}\)
is \(1/2\).
We also define the \emph{normalized affine period integrals} to be 
\(\bar{\Pi}_{\gamma}(x):=(\textstyle\prod_{i=1}^{r} x_{i,0})^{1/2}
\Pi_{\gamma}(x)\).
\end{definition}
Note that the integrand is also multi-valued. 
The precise meaning of the integral 
\eqref{eq:general-period} is explained in 
{\cite{1990-Gelfand-Kapranov-Zelevinsky-generalized-euler-integrals-and-a-hypergeometric-functions}*{\S2.2}}.
Set \(\mathcal{A}_{\mathrm{ext}}=\{\nu_{i,j}\colon (i,j)\in J\}\).
We identify \(\mathbb{C}^{\mathcal{A}_{\mathrm{ext}}}\) with 
\begin{equation*}
W^{\vee}:=W_{1}^{\vee}\times\cdots\times W_{r}^{\vee}=\mathrm{H}^{0}(X^{\vee},F_{1})
\times\cdots\times\mathrm{H}^{0}(X^{\vee},F_{r}).
\end{equation*}
Then the affine period integrals \eqref{eq:general-period} form
a local system on \(\mathbb{C}^{\mathcal{A}_{\mathrm{ext}}}\setminus\mathcal{D}\) for 
some closed subset \(\mathcal{D}\) and in general have monodromies.

From the explicit form in \eqref{eq:general-period}, it is straightforward to see that
\begin{proposition}
\label{prop:fractional-complete-intersection}
The affine period integrals satisfy the GKZ system 
\(\mathcal{M}(A_{\mathrm{ext}},\beta)\) with 
\begin{equation*}
\beta=
\begin{bmatrix}
\mathbf{0} & -1/2 & \cdots & -1/2
\end{bmatrix}^{\intercal}\in\mathbb{C}^{n+r}.
\end{equation*}
\end{proposition}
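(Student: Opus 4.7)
The plan is to verify directly that $\Pi_\gamma(x)$ is annihilated by each generator of the ideal $I(A_{\mathrm{ext}},\beta)$: the box operators and the Euler operators. The computations are straightforward once one unpacks the combinatorics of $A_{\mathrm{ext}}$, so the main task is bookkeeping. The fact that the integrand $\omega(x) = \prod_{i=1}^r s_{i,2}^{-1/2}\,\frac{dt_1}{t_1}\wedge\cdots\wedge\frac{dt_n}{t_n}$ is multi-valued is handled by interpreting it as a section of the rank-one local system $\mathscr{L}_{\chi_1}$ pulled back to $Y^\vee\setminus\mathcal{R}$, as in \cite{1990-Gelfand-Kapranov-Zelevinsky-generalized-euler-integrals-and-a-hypergeometric-functions}; this is implicit in the definition of the cycle $\gamma$.

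For the box operators, I would exploit the observation that $x_{i,j}$ appears only in $s_{i,2}$, and linearly. Hence a direct induction gives
\[
\partial^{a}\prod_{i=1}^r s_{i,2}^{-1/2}\;=\;\prod_{i=1}^r\Bigl[\tfrac{(-1)^{|a_i|}\,\Gamma(1/2+|a_i|)}{\Gamma(1/2)}\, s_{i,2}^{-1/2-|a_i|}\Bigr]\prod_{(i,j)\in J}t^{a_{i,j}\rho_{i,j}},
\]
with $|a_i|:=\sum_{j=0}^{n_i}a_{i,j}$ and the convention $\rho_{i,0}=\mathbf{0}$. For $\ell\in L_{\mathrm{ext}}$ written as $\ell^{+}-\ell^{-}$, the last $r$ rows of $A_{\mathrm{ext}}$ force $|\ell^{+}_i|=|\ell^{-}_i|$ for each $i$, so the $\Gamma$-coefficients and the $s_{i,2}$-exponents coincide on both sides; the first $n$ rows force $\sum_{i,j}\ell^{+}_{i,j}\rho_{i,j}=\sum_{i,j}\ell^{-}_{i,j}\rho_{i,j}$, which matches the $t$-monomials. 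Thus the integrands of $\partial^{\ell^{+}}\Pi_\gamma$ and $\partial^{\ell^{-}}\Pi_\gamma$ are equal.

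For the Euler operators I treat two cases separately. For $k=n+i$, the operator $\mathscr{E}_{n+i}=\sum_{j=0}^{n_i}x_{i,j}\partial_{i,j}$ is the Euler vector field for the block $V_i^\vee$; since $s_{i,2}$ is homogeneous of degree $1$ in $x_{i,\cdot}$ and the other factors are independent of $x_{i,\cdot}$, it acts on $\omega(x)$ by $-1/2=\beta_{n+i}$. For $1\le k\le n$, using $(\rho_{i,j})_k\,x_{i,j}t^{\rho_{i,j}}=x_{i,j}\cdot t_k\partial_{t_k}t^{\rho_{i,j}}$ and $s_{i,2}-x_{i,0}=\sum_{j\ge 1}x_{i,j}t^{\rho_{i,j}}$, one checks that $\mathscr{E}_k$ applied to the $x$-dependent part of $\omega(x)$ equals $t_k\partial_{t_k}$ applied to the same; combined with the torus-invariance of $dt_1\cdots dt_n/(t_1\cdots t_n)$, this means $\mathscr{E}_k\omega(x)=\mathcal{L}_{t_k\partial_{t_k}}\omega(x)$. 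Since $\omega(x)$ is a top form, $\mathcal{L}_{t_k\partial_{t_k}}\omega(x)=d\iota_{t_k\partial_{t_k}}\omega(x)$ is exact, so the integral over the closed chain $\gamma$ vanishes, matching $\beta_k=0$.

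The only real subtlety, and the step worth writing out carefully, is justifying the Stokes-type vanishing in the second Euler case: one must know that $\iota_{t_k\partial_{t_k}}\omega(x)$ extends across the branch locus as a section of the appropriate local system so that $\gamma$ has no boundary contribution. This is handled in the framework of twisted cycles of \cite{1990-Gelfand-Kapranov-Zelevinsky-generalized-euler-integrals-and-a-hypergeometric-functions}, and the present setting is entirely analogous: the half-integer exponents appearing in the integrand are compatible with $\gamma\in\mathrm{H}_n(X^\vee\setminus\mathcal{R},\mathscr{L}_{\chi_1})$ precisely because of how $\mathscr{L}_{\chi_1}$ was constructed from the double cover $Y^\vee\to X^\vee$. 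Once this is granted, the two verifications above complete the proof.
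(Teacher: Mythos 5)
Your verification is correct and is essentially the argument the paper has in mind: the paper states the proposition with no written proof beyond ``from the explicit form it is straightforward to see,'' deferring the meaning of the twisted integral to \cite{1990-Gelfand-Kapranov-Zelevinsky-generalized-euler-integrals-and-a-hypergeometric-functions}*{\S2.2}, and your direct check of the box operators (using that the last $r$ rows of $A_{\mathrm{ext}}$ force $|\ell^{+}_i|=|\ell^{-}_i|$ and the first $n$ rows match the $t$-monomials) together with the two Euler-operator cases (homogeneity of degree $-1/2$ in each block, and the Lie-derivative/Stokes argument for twisted cycles) is exactly that computation spelled out.
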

In the region \(\mathcal{R}:=\left\{x\in\mathbb{C}^{\mathcal{A}_{\mathrm{ext}}}\colon 
|x_{i,0}|\gg \max_{j}\{|x_{i,j}|\right\}
~\mbox{for all}~i=1,\ldots,r\}\), 
by making use of the power series expansion 
\begin{equation*}
\frac{1}{\sqrt{1-w}} = \sum_{k\ge 0} r_{k} w^{k},~\mbox{for}~|w|\ll 1
\end{equation*}
we can write
\begin{align*}
\left(\frac{x_{i,0}}{s_{i,2}}\right)^{1/2}=
\sum_{k\ge 0} \frac{r_{k}}{x_{i,0}^{k}}
(-x_{i,1}t^{\rho_{i,1}}-\cdots-x_{i,n_{i}}t^{\rho_{i,n_{i}}})^{k}.
\end{align*}
The \emph{normalized affine period integrals} 
\(\bar{\Pi}_{\gamma}(x)\) become
\begin{align}
\label{eq:normalized-periods}
\bar{\Pi}_{\gamma}(x)=\int_{\gamma} \left(\prod_{i=1}^{r} \sum_{k\ge 0} 
\frac{r_{k}}{x_{i,0}^{k}}
(-x_{i,1}t^{\rho_{i,1}}-\cdots-x_{i,n_{i}}t^{\rho_{i,n_{i}}})^{k}\right)
\frac{\mathrm{d}t_{1}}{t_{1}}\wedge\cdots
\wedge\frac{\mathrm{d}t_{n}}{t_{n}}.
\end{align}

Consider the cycle \(\gamma_{0}:=\left\{|t_{1}|=\cdots=|t_{n}|=\epsilon\right\}\).
We can compute \(\bar{\Pi}_{\gamma_{0}}(x)\).
Using the residue formula, over the region \(\mathcal{R}\), we have
\begin{align}
\label{eq:period-power-expansion}
\bar{\Pi}_{\gamma_{0}}(x)=(2\pi \sqrt{-1})^{n} 
\sum_{\ell\in\mathfrak{L}}
C_{\ell}\cdot (-1)^{\sum_{i=1}^{r}\ell_{i,0}}x^{\ell}
\end{align}
where \(\mathfrak{L}:=\{\ell\in L_{\mathrm{ext}}
\colon\ell_{i,j}\ge 0~\mbox{for all}~j\ne 0\}\)
and
\begin{align}
\label{eq:def-cl}
C_{\ell}= \prod_{i=1}^{r}
\frac{r_{-\ell_{i,0}}\Gamma(-\ell_{i,0}+1)}{\Gamma(\ell_{i,1}+1)\cdots
\Gamma(\ell_{i,n_{i}}+1)}.
\end{align}
\begin{remark}
\label{remark:period-cycle}
The sheaf \(\pi_{\ast}\mathbb{C}_{Y^{\vee}}\) 
(resp.~\(\pi_{\ast}\mathbb{C}_{Y^{\vee}\setminus R}\)) is decomposed into
eigensheaves
\begin{equation*}
\pi_{\ast}\mathbb{C}_{Y^{\vee}} = \mathscr{G}_{\chi_{0}}\oplus\mathscr{G}_{\chi_{1}}
~(\mbox{resp}.~\pi_{\ast}\mathbb{C}_{Y^{\vee}\setminus R} = 
\mathscr{L}_{\chi_{0}}\oplus\mathscr{L}_{\chi_{1}}).
\end{equation*} 
Here \(\chi_{k}(a)=a^{k}\) where \(a\)
is the generator of the multiplicative group \(\mathbb{Z}\slash 2\mathbb{Z}\).
Let \(i\colon R\to Y^{\vee}\) and 
\(j\colon Y^{\vee}\setminus R\to Y^{\vee}\) be the closed and open embedding.
Consider the standard triangle in the derived category
\begin{equation*}
j_{!}\mathbb{C}_{Y^{\vee}\setminus R}\to \mathbb{C}_{Y^{\vee}}
\to i_{+}\mathbb{C}_{R}.
\end{equation*}
Applying the functor \(R\pi_{\ast}\) to the above sequence, 
one can show that \(\left. \mathscr{G}_{\chi_{1}}\right|_{X^{\vee}\setminus R}
\simeq \mathscr{L}_{\chi_{1}}\) and that 
\(\mathrm{H}^{n}_{\mathrm{c}}(X^{\vee},\mathscr{G}_{\chi_{1}})\simeq
\mathrm{H}_{\mathrm{c}}^{n}(X^{\vee}\setminus R,\mathscr{L}_{\chi_{1}})\).

Moreover, we have 
\begin{align*}
\mathrm{H}^{n}_{\mathrm{c}}(Y^{\vee},\mathbb{C})&=
\mathrm{H}^{n}_{\mathrm{c}}(X^{\vee},\mathscr{G}_{\chi_{0}})\oplus
\mathrm{H}^{n}_{\mathrm{c}}(X^{\vee},\mathscr{G}_{\chi_{1}})\\
&=\mathrm{H}^{n}_{\mathrm{c}}(X^{\vee},\mathbb{C})\oplus
\mathrm{H}^{n}_{\mathrm{c}}(X^{\vee},\mathscr{G}_{\chi_{1}})\\
&\simeq\mathrm{H}^{n}_{\mathrm{c}}(X^{\vee},\mathbb{C})\oplus
\mathrm{H}_{\mathrm{c}}^{n}(X^{\vee}\setminus R,\mathscr{L}_{\chi_{1}}).
\end{align*}
If \(n\) is odd, \(\mathrm{H}^{n}_{\mathrm{c}}(X^{\vee},\mathbb{C})=0\)
since \(X^{\vee}\) is a smooth toric variety. 
\end{remark}

\section{Existence of maximal degeneracy points}
In this section, we study the 
maximal degeneracy problem and show that the GKZ
system associated with the gauge fixed double cover family
\(\mathcal{Y}^{\vee}\to V\) 
admits a maximal degeneracy point on a resolution of the secondary fan compactification of the moduli.
This extends the results in 
\cite{1997-Hosono-Lian-Yau-maximal-degeneracy-points-of-gkz-systems} to our case. 
The proof presented here is parallel to the one given in 
\cite{1997-Hosono-Lian-Yau-maximal-degeneracy-points-of-gkz-systems}.

\subsection{The maximal degeneracy points}
From the discussion in \S\ref{subsection:affine-period-integrals},
the affine period integrals \eqref{eq:general-period}
are sections of a local system defined on   
\(\mathbb{C}^{\mathcal{A}_{\mathrm{ext}}}\setminus\mathcal{D}\).
Recall that \(\mathcal{A}_{\mathrm{ext}}=\{\nu_{i,j}\colon (i,j)\in J\}\), 
\(W_{i}=\mathrm{H}^{0}(X^{\vee},F_{i})^{\vee}\), and \(W=\prod_{i=1}^{r} W_{i}\).
We also identify \(\mathbb{C}^{\mathcal{A}_{\mathrm{ext}}}\) with \(W^{\vee}\).
Applying the functor \(\mathrm{Hom}_{\mathbb{Z}}(-,\mathbb{C}^{\ast})\) to 
the sequence
\begin{equation*}
0\to L_{\mathrm{ext}}\to \mathbb{Z}^{p+r}\equiv
\mathbb{Z}^{\mathcal{A}_{\mathrm{ext}}}\to \mathbb{Z}^{n+r}\equiv N\times\mathbb{Z}^{r}\to 0,
\end{equation*}
we obtain a short exact sequence of algebraic tori 
(\(T_{M}=\mathrm{Hom}_{\mathbb{Z}}(N,\mathbb{C}^{\ast})\))
\begin{equation*}
1\to T_{M}\times(\mathbb{C}^{\ast})^{r}\to (\mathbb{C}^{\ast})^{\mathcal{A}_{\mathrm{ext}}}
\to \mathrm{Hom}(L_{\mathrm{ext}},\mathbb{C}^{\ast})\to 1.
\end{equation*}
Let \(S_{\mathcal{A}_{\mathrm{ext}}}\) be the image of 
\((\mathbb{C}^{\ast})^{\mathcal{A}_{\mathrm{ext}}}\setminus\mathcal{D}\)
under the map
\begin{equation*}
(\mathbb{C}^{\ast})^{\mathcal{A}_{\mathrm{ext}}}\to
(\mathbb{C}^{\ast})^{\mathcal{A}_{\mathrm{ext}}}\slash T_{M}\times (\mathbb{C}^{\ast})^{r}
\xrightarrow{\phi} 
\mathrm{Hom}_{\mathbb{Z}}(L_{\mathrm{ext}},\mathbb{C}^{\ast}). 
\end{equation*}
Here the isomorphism \(\phi\) is given by 
\begin{equation*}
\phi(x)(\ell) = (-1)^{\sum_{i=1}^{r}\ell_{i,0}}x^{\ell}~
\mbox{where}~
\ell\in L_{\mathrm{ext}}.
\end{equation*}
Any complete fan \(F\) in \(L_{\mathrm{ext}}^{\vee}\otimes\mathbb{R}\) 
gives rise to a complete toric variety \(X_{F}\)
which compactifies the torus 
\begin{equation*}
\mathrm{Hom}_{\mathbb{Z}}(L_{\mathrm{ext}},\mathbb{C}^{\ast})
\simeq \mathrm{Hom}_{\mathbb{Z}}(L,\mathbb{C}^{\ast})
\end{equation*}
and \(S_{\mathcal{A}_{\mathrm{ext}}}\) as well. 
Since the normalized affine period integrals \(\bar{\Pi}_{\gamma}(x)\)
are \(T_{M}\times (\mathbb{C}^{\ast})^{r}\) invariant, 
they descend to local sections of a locally constant sheaf on \(S_{\mathcal{A}_{\mathrm{ext}}}\).

\begin{definition}
\label{def:maximal-degeneracy-points}
We call a smooth boundary point 
\(p\in X_{F}\setminus \mathrm{Hom}_{\mathbb{Z}}(L,\mathbb{C}^{\ast})\) a 
\emph{maximal degeneracy point} if near \(p\) there is exactly one normalized affine period integral 
\(\bar{\Pi}_{\gamma}\) (up to a constant) extends over \(p\) holomorphically.
\end{definition}

\subsection{Triangulations, secondary fans and Gr\"{o}bner fans}
To proceed, let us retain the notation in 
\S\ref{subsection:notation} and recall the following terminologies. 
\begin{itemize}
\item Let \(\mathcal{A}_{\mathrm{ext}}=\{\nu_{i,j}\colon (i,j)\in J\}\) 
be the set points in \(\mathbb{Z}^{n+r}\).
We denote by \(\mathrm{Conv}(\mathcal{A}_{\mathrm{ext}})\) the convex 
hull generated by \(\mathcal{A}_{\mathrm{ext}}\).
\item A triangulation \(\mathscr{T}\) of \(\mathrm{Conv}(\mathcal{A}_{\mathrm{ext}})\) is a 
collection of \((r+n-1)\)-dimensional simplices whose
vertices are in \(\mathcal{A}_{\mathrm{ext}}\) such that the 
intersection of two such simplices is a face of both
and that their union is \(\mathrm{Conv}(\mathcal{A}_{\mathrm{ext}})\).
\item A continuous function \(h\) on the cone over \(\mathrm{Conv}(\mathcal{A}_{\mathrm{ext}})\) is called 
\emph{\(\mathscr{T}\)-piecewise linear} if it is linear on the cone over each 
simplex in \(\mathscr{T}\).
A \(\mathscr{T}\)-piecewise linear function \(h\) is called \emph{convex} 
if \(h(a+b)\le h(a)+h(b)\)
for arbitrary \(a,b\) and is called \emph{strictly convex} 
if it is convex and
\(\left.h\right|_{\sigma}\ne \left.h\right|_{\tau}\) for any large cones \(\sigma\ne\tau\).
\item Each point \(x\in\mathbb{R}^{\mathcal{A}_{\mathrm{ext}}}\) (components are labeled by \((i,j)\in J\))
determines a \(\mathscr{T}\)-piecewise linear function, which is denoted by \(h_{x}\). 
Let \(\mathcal{C}(\mathscr{T})\) be the set of all \(x\in\mathbb{R}^{\mathcal{A}_{\mathrm{ext}}}\) 
such that \(h_{x}\) is convex and that \(h_{x}(\nu_{i,j})\le x_{i,j}\) for a non vertex
\(\nu_{i,j}\in\mathcal{A}_{\mathrm{ext}}\).
Note that \(\mathcal{C}(\mathscr{T})\) is a rational
polyhedral cone in \(\mathbb{R}^{\mathcal{A}_{\mathrm{ext}}}\) 
but not strongly convex.
\item A triangulation \(\mathscr{T}\) is called \emph{regular} if 
\(\mathcal{C}(\mathscr{T})\) contains an interior point,
i.e., there exists an \(x\in\mathbb{R}^{\mathcal{A}_{\mathrm{ext}}}\) such that
\(h_{x}\) is a strictly convex function.
\end{itemize}

\begin{definition}
\label{def:secondary-fan}
The collection of the cones \(\mathcal{C}(\mathscr{T})\)
with \(\mathscr{T}\) regular, together
with all of their faces form a generalized fan in \(\mathbb{R}^{\mathcal{A}_{\mathrm{ext}}}\).
Note that each cone in \(\mathcal{C}(\mathscr{T})\) contains 
\(M_{\mathbb{R}}\times\mathbb{R}^{r}\) as a linear subspace
via \(A_{\mathrm{ext}}^{\intercal}\colon 
M_{\mathbb{R}}\times\mathbb{R}^{r}\hookrightarrow \mathbb{R}^{\mathcal{A}_{\mathrm{ext}}}\).
We can project the generalized fan \(\mathcal{C}(\mathscr{T})\)
along the subspace and get a complete fan in \(L_{\mathrm{ext}}^{\vee}\otimes\mathbb{R}\).
The resulting fan \(S\Sigma\) is called the \emph{secondary fan} of \(\mathcal{A}_{\mathrm{ext}}\).
\end{definition}

Each \(\omega\in\mathbb{R}^{\mathcal{A}_{\mathrm{ext}}}\) determines a polyhedral 
subdivision on \(\mathrm{Conv}(\mathcal{A}_{\mathrm{ext}})\).
Let \(C=\mathrm{Cone}\{(\nu_{i,j},\omega_{i,j})\in
\mathcal{A}_{\mathrm{ext}}\times\mathbb{R}\colon
\nu_{i,j}\in\mathcal{A}_{\mathrm{ext}}\}\). Recall that 
the lower hull of \(C\) is a 
collection of facets of \(C\) whose last coordinate in the inward 
normal vector is positive. Projecting down 
the facets in the lower hull gives rises to 
a polyhedral subdivision of \(\mathrm{Conv}(\mathcal{A}_{\mathrm{ext}})\)
if \(\dim C=n+r\).
For generic \(\omega\), the subdivision \(\mathscr{T}_{\omega}\) is a triangulation.
One can show that a triangulation \(\mathscr{T}\) of \(\mathrm{Conv}(\mathcal{A}_{\mathrm{ext}})\) is regular
if and only if \(\mathscr{T} = \mathscr{T}_{\omega}\) 
for some \(\omega\in\mathbb{R}^{\mathcal{A}_{\mathrm{ext}}}\).

Consider a polynomial ring 
\(\mathbb{C}[y]:=\mathbb{C}[y_{i,j}\colon (i,j)\in J]\)
and the toric ideal
\begin{eqnarray*}
I_{\mathcal{A}_{\mathrm{ext}}}=\left\langle y^{l^{+}}-y^{l^{-}}\colon l=l^{+}-l^{-}\in L_{\mathrm{ext}}
\right\rangle.
\end{eqnarray*}
Each \(\omega\in\mathbb{R}_{\ge 0}^{\mathcal{A}_{\mathrm{ext}}}\) 
determines a weight on \(\mathbb{C}[y]\)
by defining 
\begin{eqnarray*}
\mathrm{in}_{\omega}(y^{n}):=\sum_{i,j} \omega_{i,j} n_{i,j},~\mbox{where}
~y^{n} = \prod_{i,j} y_{i,j}^{n_{i,j}}.
\end{eqnarray*} 
Let \(\mathrm{LT}_{\omega}(I_{\mathcal{A}_{\mathrm{ext}}})\) be the leading term ideal 
with respect to \(\mathrm{in}_{\omega}\). We say that 
\(\omega,~\omega'\in\mathbb{R}^{\mathcal{A}_{\mathrm{ext}}}_{\ge 0}\) are equivalent 
if \(\mathrm{LT}_{\omega}(I_{\mathcal{A}_{\mathrm{ext}}}) =
\mathrm{LT}_{\omega'}(I_{\mathcal{A}_{\mathrm{ext}}})\).
We can extend the equivalence relation to 
\(\mathbb{R}^{\mathcal{A}_{\mathrm{ext}}}\) by the homogeneity of \(I_{\mathcal{A}_{\mathrm{ext}}}\).

\begin{definition}[cf.~\cites{1991-Sturmfels-grobner-bases-of-toric-varieties,1996-Sturmfels-grobner-bases-and-convex-polytopes}]
\label{def:grobner-fan}
The equivalence classes of vectors in \(\mathbb{R}^{\mathcal{A}_{\mathrm{ext}}}\) form a fan. 
Projecting along the linear subspace \(A_{\mathrm{ext}}^{\intercal}\colon
M_{\mathbb{R}}\times\mathbb{R}^{r}
\hookrightarrow \mathbb{R}^{\mathcal{A}_{\mathrm{ext}}}\), we obtain a fan in \(L_{\mathrm{ext}}^{\vee}
\otimes{\mathbb{R}}\).
The resulting fan \(G\Sigma\) is called the \emph{Gr\"{o}bner fan} of \(\mathcal{A}_{\mathrm{ext}}\).
An interior point
in a large cone in \(G\Sigma\) is called a \emph{term order} of \(I_{\mathcal{A}_{\mathrm{ext}}}\).
\end{definition}

\begin{remark}
Although the secondary fan and the Gr\"{o}bner fan (cf.~Definition \ref{def:secondary-fan}
and Definition \ref{def:grobner-fan}) depend not only on \(\Sigma\) but also on the nef-partition,
we still denote them by \(S\Sigma\) and \(G\Sigma\) respectively for simplicity.
We also remark that \(\mathrm{Conv}(\mathcal{A}_{\mathrm{ext}})\) projects
to \(\mathrm{Conv}(\nabla_{1},\ldots,\nabla_{r})\) under the canonical projection
\(N_{\mathbb{R}}\times\mathbb{R}^{r}\to N_{\mathbb{R}}\).
\end{remark}

\begin{remark}
Sturmfels \cite{1991-Sturmfels-grobner-bases-of-toric-varieties} showed that
the Gr\"{o}bner fan \(G\Sigma\) 
refines the secondary fan \(S\Sigma\). 
The two fans coincide if 
\(\mathcal{A}_{\mathrm{ext}}\) is 
unimodular.
In particular, if \(\omega\in\mathbb{R}^{\mathcal{A}_{\mathrm{ext}}}\) is a term order, 
then \(\mathscr{T}_{\omega}\) is a triangulation of 
\(\mathrm{Conv}(\mathcal{A}_{\mathrm{ext}})\).
\end{remark}
\subsection{The cohomology ring of toric manifolds}
We resume the notation in \S\ref{subsection:notation}
and the situation there.
Recall that a primitive collection of \( \Sigma \) 
is a subset \( \mathcal{P}\subset \Sigma(1) \)
such that the full set \(\mathcal{P}\) does not form a cone in \(\Sigma\) but any proper
subset does. 

For a projective \emph{smooth} toric variety \(X_{\Sigma}\),
the cohomology ring \(\mathrm{H}^{\bullet}(X_{\Sigma},\mathbb{Z})\) is given by
\( \mathbb{Z}[a_{i,j}\colon (i,j)\in I]\slash \mathcal{I}\), where \(\mathcal{I}\) is 
the ideal generated by
\begin{itemize}
\itemsep=3pt
\item[(a)] \(a_{\mathcal{P}}:=\prod_{(i,j)\in\mathcal{P}} a_{i,j}\), 
where \(\mathcal{P}\) is a primitive collection
in \(\Sigma\);
\item[(b)] \(\sum_{(i,j)\in I}\langle m,\rho_{i,j}\rangle a_{i,j}\) 
for all \(m\in M\).
\end{itemize}
The ideal generated by (a) 
is called the \emph{Stanley--Reisner ideal} of \( \Sigma \).

For a primitive collection \(\mathcal{P}\), we can define the
\emph{primitive relation} of \(\mathcal{P}\) as follows.
By completeness of \(\Sigma\), 
the vector \(\sum_{(i,j)\in\mathcal{P}} \rho_{i,j}\) must lie in the relative interior of 
some cone \(\sigma\) uniquely in \(\Sigma\). We may write
\begin{equation*}
\sum_{(i,j)\in\mathcal{P}} \rho_{i,j} 
= \sum_{(i,j)\in \sigma(1)} c_{i,j} \rho_{i,j},~c_{i,j}\in\mathbb{Z}_{>0}.
\end{equation*}
Equivalently, we have
\begin{equation*}
\sum_{(i,j)\in\mathcal{P}} \rho_{i,j} 
-\sum_{(i,j)\in \sigma(1)} c_{i,j} \rho_{i,j}
=\sum_{(i,j)\in I} b_{i,j} \rho_{i,j} = 0,~\mbox{with}~b_{i,j}\in\mathbb{Z}.
\end{equation*}
Under the inclusion \(L\hookrightarrow \mathbb{R}^{p}\),
the vector \( (b_{i,j})\in \mathbb{R}^{p} \) is an 
element in \(L\), called the \emph{primitive relation of \(\mathcal{P}\)},
and is denoted by \(\ell(\mathcal{P})\). We can identify 
\(L\otimes\mathbb{R}\) with \(\mathrm{N}_{1}(X_{\Sigma})\), 
the real vector space of \(1\)-cycles on \(X_{\Sigma}\)
modulo numerical equivalence.
\begin{proposition}[Toric cone theorem]
Let \(\mathrm{NE}(X_{\Sigma})\subset L\otimes\mathbb{R}\) be the cone generated by
classes of irreducible complete curves in \(X_{\Sigma}\).
We have
\begin{equation}
\overline{\mathrm{NE}}(X_{\Sigma}) = \mathrm{NE}(X_{\Sigma}) = 
\sum_{\mathcal{P}} \mathbb{R}_{\ge 0} \ell(\mathcal{P}),
\end{equation}
where the summation runs over all primitive collections $\mathcal{P}$.
\end{proposition}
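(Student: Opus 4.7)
The plan is to establish the chain $\overline{\mathrm{NE}}(X_\Sigma) = \mathrm{NE}(X_\Sigma) = \sum_{\mathcal{P}}\mathbb{R}_{\ge 0}\ell(\mathcal{P})$ by first proving the description of $\mathrm{NE}(X_\Sigma)$ in terms of primitive relations, and then deducing closedness (and hence the first equality) automatically from the fact that $\Sigma(1)$ is finite, so there are only finitely many primitive collections.

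For the easy inclusion $\sum_{\mathcal{P}}\mathbb{R}_{\ge 0}\ell(\mathcal{P}) \subseteq \mathrm{NE}(X_\Sigma)$, I would exhibit each primitive relation $\ell(\mathcal{P})$ as the class of an explicit effective curve. Using the defining property that no proper subset of a primitive collection $\mathcal{P}$ fails to span a cone of $\Sigma$, together with smoothness, one extracts a wall $\tau \in \Sigma(n-1)$ whose associated torus-invariant curve $V(\tau)\cong \mathbb{P}^1$ has intersection numbers with the $T$-divisors matching the components of $\ell(\mathcal{P})$ listed in the definition; effectivity is then clear.

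For the reverse inclusion $\mathrm{NE}(X_\Sigma) \subseteq \sum_{\mathcal{P}}\mathbb{R}_{\ge 0}\ell(\mathcal{P})$, my strategy rests on the classical fact (Fulton, \emph{Introduction to Toric Varieties}) that on any complete toric variety the Mori cone is generated by the classes of torus-invariant irreducible $\mathbb{P}^1$'s. Each such curve corresponds to a wall $\tau\in\Sigma(n-1)$ lying between two maximal cones $\sigma_1,\sigma_2$, and since $X_\Sigma$ is smooth the $n+1$ rays in $\sigma_1(1)\cup\sigma_2(1)$ satisfy a unique primitive linear relation that realizes the class $[V(\tau)]\in L$. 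Hence it suffices to show every such wall relation lies in $\sum_{\mathcal{P}}\mathbb{R}_{\ge 0}\ell(\mathcal{P})$.

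The main obstacle is precisely this decomposition step. My approach, following Batyrev, is inductive: among the rays appearing with positive coefficient in a given wall relation, locate a minimal subset that does not span a cone of $\Sigma$; by the smoothness assumption and the definition of primitive collections, this subset is itself a primitive collection $\mathcal{P}$. Subtract the largest non-negative multiple of $\ell(\mathcal{P})$ that keeps all coefficients non-negative, verify that the remainder is again realized by an effective (hence non-negative integer) combination of torus-invariant curve classes, and repeat. Termination follows because each subtraction strictly decreases a well-chosen measure—for instance, the sum of positive coefficients, which is a non-negative integer. Once every wall relation has been so decomposed, we obtain $\mathrm{NE}(X_\Sigma) = \sum_{\mathcal{P}}\mathbb{R}_{\ge 0}\ell(\mathcal{P})$; finite generation of the right-hand side forces it to be closed, yielding $\mathrm{NE}(X_\Sigma) = \overline{\mathrm{NE}}(X_\Sigma)$ as well.
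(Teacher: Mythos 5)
The paper offers no proof of this proposition at all: it is quoted as the classical toric cone theorem of Batyrev (see Batyrev's classification paper, or Cox--von Renesse, \emph{Primitive collections and toric varieties}), so your attempt has to stand on its own, and it has two genuine gaps. The first is in your ``easy inclusion'': you claim that every primitive relation \(\ell(\mathcal{P})\) is the class of a torus-invariant wall curve \(V(\tau)\). This is false. Take \(X_{\Sigma}\) to be the smooth projective toric surface whose rays are \(\pm e_{1},\pm e_{2},\pm(e_{1}+e_{2})\) (the degree-six del Pezzo). Then \(\mathcal{P}=\{e_{1},-e_{1}\}\) is a primitive collection with relation \(e_{1}+(-e_{1})=0\), so \(\ell(\mathcal{P})\) has intersection number \(1\) with \(D_{e_{1}}\) and \(D_{-e_{1}}\) and \(0\) with the other four boundary divisors; in particular it meets every invariant divisor non-negatively. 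But the torus-invariant curves here are exactly the six boundary divisors, each with self-intersection \(-1\), so \(\ell(\mathcal{P})\) is not the class of any invariant curve (it equals \([D_{e_{2}}]+[D_{e_{1}+e_{2}}]\), effective but reducible). The inclusion \(\ell(\mathcal{P})\in\mathrm{NE}(X_{\Sigma})\) is true, but requires a different argument, e.g.\ pairing \(\ell(\mathcal{P})\) with an arbitrary nef divisor and using convexity of its support function, together with the fact that \(\mathrm{NE}(X_{\Sigma})\) is closed because it is generated by the finitely many wall classes.

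The second gap is in the reverse inclusion, which is where the actual content of Batyrev's theorem sits and which you acknowledge as ``the main obstacle'' without resolving it. After subtracting a multiple of \(\ell(\mathcal{P})\) from a wall relation you simply assert that the remainder ``is again realized by an effective combination of torus-invariant curve classes''; no argument is given, and this effectivity of the remainder is precisely the hard step (it is what the complete proofs of Batyrev, Casagrande and Cox--von Renesse are devoted to). Your termination measure also fails as stated: subtracting \(\ell(\mathcal{P})\) lowers the coefficients on \(\mathcal{P}\) by \(1\) but raises the coefficients on \(\sigma(1)\) by the positive integers \(c_{i,j}\), so the sum of positive coefficients need not decrease. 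The standard remedy is to induct on the degree against a fixed integral ample divisor, which does strictly drop since \(\ell(\mathcal{P})\) has positive degree, but even then the induction only closes once one knows each remainder stays in the cone spanned by wall classes, which is exactly the unproved point. As written, therefore, neither inclusion is established; either carry out the nef-pairing argument and the full decomposition of wall relations, or cite the theorem, as the paper implicitly does.
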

\begin{lemma}
Under our smoothness assumption, we have \( \mathcal{P} \cap \sigma(1) = \emptyset \)
where \(\sigma(1)\) is the set of \(1\)-cones contained in \(\sigma\). 
\end{lemma}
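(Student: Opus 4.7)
The plan is to argue by contradiction and exploit the interplay between the primitive relation and the fan axioms. Suppose, toward a contradiction, that there exists $\rho_{0}\in\mathcal{P}\cap\sigma(1)$, and set $\mathcal{P}':=\mathcal{P}\setminus\{\rho_{0}\}$. Since $\mathcal{P}$ is a primitive collection, every proper subset of $\mathcal{P}$ generates a cone of $\Sigma$; in particular $\mathcal{P}'$ spans a cone $\tau\in\Sigma$. Because $X_{\Sigma}$ is smooth (hence $\Sigma$ is simplicial), both $\tau$ and $\sigma$ are simplicial, and the sum
\begin{equation*}
u:=\sum_{\rho\in\mathcal{P}'}\rho
\end{equation*}
lies in the relative interior of $\tau$.

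Next, I would rewrite $u$ using the primitive relation. Since $\sum_{\rho\in\mathcal{P}}\rho=\sum_{\rho'\in\sigma(1)}c_{\rho'}\rho'$ with $c_{\rho'}\in\mathbb{Z}_{>0}$, we get
\begin{equation*}
u=\sum_{\rho'\in\sigma(1)}c_{\rho'}\rho'-\rho_{0}=\sum_{\rho'\in\sigma(1)\setminus\{\rho_{0}\}}c_{\rho'}\rho'+(c_{\rho_{0}}-1)\rho_{0},
\end{equation*}
and since $c_{\rho_{0}}\ge 1$, this expresses $u$ as a non-negative combination of the generators of $\sigma$. In particular, $u\in\sigma$.

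Now I would invoke the fan axiom: $\tau\cap\sigma$ is a common face of both $\tau$ and $\sigma$. But $u$ lies in the relative interior of $\tau$ and also in $\tau\cap\sigma$, and the relative interiors of the faces of $\tau$ partition $\tau$, so the only face of $\tau$ meeting $\mathrm{relint}(\tau)$ is $\tau$ itself. Hence $\tau\cap\sigma=\tau$, i.e.\ $\tau\subseteq\sigma$, which forces $\mathcal{P}'\subseteq\tau(1)\subseteq\sigma(1)$. Together with $\rho_{0}\in\sigma(1)$, this gives $\mathcal{P}\subseteq\sigma(1)$. Because $\sigma$ is simplicial, every subset of $\sigma(1)$ generates a face of $\sigma$, which is itself a cone of $\Sigma$; thus $\mathcal{P}$ spans a cone of $\Sigma$, contradicting the defining property of a primitive collection.

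The argument is mostly structural, so there is no real computational obstacle; the one subtlety to watch is the step $\tau\cap\sigma=\tau$, where one must use simpliciality of $\tau$ to conclude that $u=\sum_{\rho\in\mathcal{P}'}\rho$ truly lies in $\mathrm{relint}(\tau)$, together with the standard fact that distinct faces of a cone have disjoint relative interiors. Everything else follows from the fan property and from $c_{\rho_{0}}\ge 1$, both of which are guaranteed by the smoothness hypothesis and the definition of the primitive relation.
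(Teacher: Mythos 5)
Your proof is correct. The paper gives no argument of its own for this lemma---it simply refers to Proposition~4.7 of Hosono--Lian--Yau (1997)---and your contradiction argument is essentially the standard proof underlying that citation: remove the common ray $\rho_{0}$, observe that $u=\sum_{\rho\in\mathcal{P}\setminus\{\rho_{0}\}}\rho$ lies both in $\mathrm{relint}(\tau)$ and (since $c_{\rho_{0}}\ge 1$) in $\sigma$, use the fan axiom to conclude $\tau=\tau\cap\sigma$ is a face of $\sigma$, hence $\mathcal{P}\subseteq\sigma(1)$, so $\mathcal{P}$ spans a face of $\sigma$ and therefore a cone of $\Sigma$, contradicting primitivity. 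Two minor remarks: the claim $u\in\mathrm{relint}(\tau)$ does not in fact need simpliciality (the sum of all ray generators of any cone lies in its relative interior, by the usual supporting-hyperplane argument), and the inclusion $\mathcal{P}\setminus\{\rho_{0}\}\subseteq\tau(1)$ that you use implicitly is justified by the fan axiom that a ray of $\Sigma$ contained in $\tau$ is a face of $\tau$. Smoothness is genuinely used where you use it: it gives $c_{\rho'}\in\mathbb{Z}_{>0}$ in the primitive relation and guarantees that every subset of $\sigma(1)$ spans a face of $\sigma$.
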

\begin{proof}
See {\cite{1997-Hosono-Lian-Yau-maximal-degeneracy-points-of-gkz-systems}*{Proposition 4.7}}.
\end{proof}
We can lift the primitive relations to obtain
relations among \(\nu_{i,j}\).
For a primitive collection \(\mathcal{P}\), we have correspondingly a cone \(\sigma\)
in \(\Sigma\) as above. We can thus write
\begin{equation}
\label{equation:primitive-relation-lifting}
\sum_{(i,j)\in\mathcal{P}} \nu_{i,j} = \sum_{(i,j)\in\sigma(1)} c_{i,j}\nu_{i,j} + 
\sum_{i=1}^{r} c_{i,0} \nu_{i,0}.
\end{equation}
\begin{corollary}
\(c_{i,0}\ge 0\) for all \(i=1,\ldots,r\).
\end{corollary}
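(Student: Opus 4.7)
The plan is to derive an explicit formula for $c_{k,0}$ by projecting the lifted relation \eqref{equation:primitive-relation-lifting} onto the last $r$ coordinates of $N\times\mathbb{Z}^r$, and then recognize that formula as the intersection pairing of the nef divisor $E_k$ with the primitive curve class $\ell(\mathcal{P})$. Projecting \eqref{equation:primitive-relation-lifting} onto the first $n$ coordinates kills every $\nu_{i,0}$ and reproduces the original primitive relation $\sum_{(i,j)\in\mathcal{P}}\rho_{i,j}=\sum_{(i,j)\in\sigma(1)}c_{i,j}\rho_{i,j}$, confirming that the $c_{i,j}$ indexed by $(i,j)\in\sigma(1)$ are the classical ones, so there is no ambiguity in the lifting. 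Next, reading off the $k$-th of the last $r$ coordinates — where both $\nu_{i,j}$ (any $j\ge 1$) and $\nu_{i,0}$ contribute $\delta_{k,i}$ — yields
\begin{equation*}
\#(\mathcal{P}\cap I_k) \;=\; \sum_{(k,j)\in\sigma(1)} c_{k,j} \;+\; c_{k,0},
\end{equation*}
so $c_{k,0}=\#(\mathcal{P}\cap I_k)-\sum_{(k,j)\in\sigma(1)} c_{k,j}$.

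The key observation is that this expression is exactly $E_k\cdot\ell(\mathcal{P})$. Indeed, under the identification $L=\ker(A)$, the divisor $E_k=\sum_{(i,j)\in I_k} D_{i,j}$ pairs with an element $(b_{i,j})\in L$ via $\sum_{(i,j)\in I_k} b_{i,j}$; the primitive relation vector has entries $+1$ on $\mathcal{P}$, $-c_{i,j}$ on $\sigma(1)$, and $0$ elsewhere, and the preceding lemma guarantees $\mathcal{P}\cap\sigma(1)=\emptyset$, so these two contributions do not overlap and restricting the sum to $I_k$ gives precisely our formula for $c_{k,0}$. To conclude, I would invoke the toric cone theorem to get $\ell(\mathcal{P})\in\mathrm{NE}(X_{\Sigma})$, together with the nef-partition assumption that each $E_k$ is nef; the pairing $E_k\cdot\ell(\mathcal{P})$ is then non-negative, which is exactly $c_{k,0}\ge 0$. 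The only bookkeeping hurdle is keeping the block structure of $\nu_{i,j}=(\rho_{i,j},\delta_{\bullet,i})$ straight — the first $n$ coordinates encode the geometry of the fan and the last $r$ record membership in the nef-partition — but once one projects onto the correct block the positivity is immediate from nefness, so there is no substantial obstacle.
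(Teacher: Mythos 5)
Your proof is correct and follows essentially the same route as the paper: both identify $c_{k,0}$ with the intersection number $E_k\cdot\ell(\mathcal{P})$ and conclude by nefness of $E_k$ together with $\ell(\mathcal{P})\in\mathrm{NE}(X_\Sigma)$. You merely make explicit the coordinate bookkeeping (projection onto the last $r$ coordinates and the use of $\mathcal{P}\cap\sigma(1)=\emptyset$) that the paper leaves implicit.
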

\begin{proof}
\(\ell(\mathcal{P})\) represents a curve class. The assertion 
follows from the fact that \( I_{1}\sqcup\cdots\sqcup I_{r} \) is a nef-partition 
and \(c_{i,0}\) is the intersection number of \(\ell(\mathcal{P})\) with \(E_{i}\).
\end{proof}

Let \(\ell(\mathcal{P})\) be a primitive relation and \(\ell_{\mathrm{ext}}(\mathcal{P})\)
be the corresponding element in \(L_{\mathrm{ext}}\) under the identification \(L\simeq
L_{\mathrm{ext}}\). We can rewrite \eqref{equation:primitive-relation-lifting}
into
\begin{equation}
0=\sum_{(i,j)\in\mathcal{P}} \nu_{i,j} - \sum_{(i,j)\in\sigma(1)} c_{i,j}\nu_{i,j} -
\sum_{i=1}^{r} c_{i,0} \nu_{i,0}=\sum_{(i,j)\in J} d_{i,j}\nu_{i,j}.
\end{equation}
\begin{corollary}
\label{corollary:coefficient-vector-lifting-relation}
The vector \((d_{i,j})_{(i,j)\in J}\)
is equal to \(\ell_{\mathrm{ext}}(\mathcal{P})\) as
elements in \(\mathbb{R}^{\mathcal{A}_{\mathrm{ext}}}\) and 
\(\ell_{\mathrm{ext}}^{\pm}(\mathcal{P})\) is given by the left-hand
and the right-hand side of \eqref{equation:primitive-relation-lifting}.
\end{corollary}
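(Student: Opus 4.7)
The plan is to unpack the two claims by tracking coefficients carefully in the $\mathbb{Z}^{\mathcal{A}}$-coordinates, using the relations $\nu_{i,j} = (\rho_{i,j}, e_i)$ and $\nu_{i,0} = (\mathbf{0}, e_i)$ together with the identifications among $L$, $L_{\mathrm{ext}}$, and the primitive relation $\ell(\mathcal{P})$.

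First I would verify that $(d_{i,j})_{(i,j)\in J}$ lies in $L_{\mathrm{ext}}$: by construction it satisfies $\sum_{(i,j)\in J} d_{i,j}\nu_{i,j}=0$, and this is precisely the kernel equation for $A_{\mathrm{ext}}$. Next, I would show $(d_{i,j})_{(i,j)\in J}=\ell_{\mathrm{ext}}(\mathcal{P})$. Writing out the $\nu_{i,j}$ componentwise, the first $n$ coordinates of $\sum d_{i,j}\nu_{i,j}=0$ give $\sum_{(i,j)\in I} d_{i,j}\rho_{i,j}=0$, so the projection of $(d_{i,j})$ along the forgetful map $\mathbb{Z}^{p+r}\to\mathbb{Z}^{p}$ recovers the primitive relation $\ell(\mathcal{P})\in L$. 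Since the vertical map $L_{\mathrm{ext}}\to L$ in the paper's diagram is an isomorphism, this already pins down $(d_{i,j})$ uniquely as $\ell_{\mathrm{ext}}(\mathcal{P})$. As a sanity check, the last $r$ coordinates of $\sum d_{i,j}\nu_{i,j}=0$ force $d_{i,0}=-\sum_{j=1}^{n_i}d_{i,j}$, and by the previous Corollary this common value equals $-c_{i,0}$ since $c_{i,0}=\ell(\mathcal{P})\cdot E_i=\sum_{(i,j)\in I_i}\ell(\mathcal{P})_{i,j}$, which is consistent with reading $d_{i,0}=-c_{i,0}$ off of \eqref{equation:primitive-relation-lifting}.

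Second, I would identify the positive and negative parts. Rewriting \eqref{equation:primitive-relation-lifting} as $\sum d_{i,j}\nu_{i,j}=0$, the coefficient $d_{i,j}$ equals $+1$ for $(i,j)\in\mathcal{P}$, $-c_{i,j}$ for $(i,j)\in\sigma(1)$ (with $c_{i,j}>0$), $-c_{i,0}$ for the extra indices $(i,0)$ (with $c_{i,0}\ge 0$), and $0$ elsewhere. Hence the LHS of \eqref{equation:primitive-relation-lifting} collects precisely the terms with positive $d_{i,j}$ and the RHS collects those with negative $d_{i,j}$. To conclude these give the canonical disjoint decomposition $\ell_{\mathrm{ext}}^{+}(\mathcal{P})-\ell_{\mathrm{ext}}^{-}(\mathcal{P})$, I need to check the supports are disjoint: the LHS is supported on $\mathcal{P}\subset I$, while the RHS is supported on $\sigma(1)\cup\{(i,0):c_{i,0}>0\}$. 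The extra indices $(i,0)$ are obviously disjoint from $\mathcal{P}\subset I$, and the Lemma immediately preceding the Corollary gives $\mathcal{P}\cap\sigma(1)=\emptyset$.

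The only nontrivial input is that Lemma guaranteeing disjointness of $\mathcal{P}$ from $\sigma(1)$ under smoothness; without it the ``$\pm$'' interpretation could fail because cancellations between $\mathcal{P}$ and $\sigma(1)$ contributions would be possible. Everything else is bookkeeping that follows from the definitions of $\nu_{i,j}$, $\nu_{i,0}$ and the identification $L\simeq L_{\mathrm{ext}}$, so I do not anticipate any obstacle beyond invoking that Lemma.
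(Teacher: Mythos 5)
Your argument is correct and is essentially the intended one: the paper leaves this corollary without explicit proof precisely because it follows, as you show, from the identification $L\simeq L_{\mathrm{ext}}$ via the forgetful map, the nonnegativity $c_{i,0}\ge 0$ of the preceding corollary, and the preceding lemma's disjointness $\mathcal{P}\cap\sigma(1)=\emptyset$, which guarantees the two sides of \eqref{equation:primitive-relation-lifting} have disjoint supports and hence give the canonical decomposition $\ell_{\mathrm{ext}}^{+}(\mathcal{P})-\ell_{\mathrm{ext}}^{-}(\mathcal{P})$. No gaps.
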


\subsection{Indicial ideals of Picard--Fuchs equations}
Our aim in this paragraph is to
describe the indicial rings attached to the GKZ system.
The arguments here are almost along the same line in 
\cite{1997-Hosono-Lian-Yau-maximal-degeneracy-points-of-gkz-systems}.
In this subsection, unless otherwise stated, \(X=X_{\Sigma}\) is a smooth projective toric
variety defined by a fan \(\Sigma\) as in \S\ref{subsection:notation}.
\begin{definition}
For \(\ell\in L_{\mathrm{ext}}=
\mathrm{ker}(A_{\mathrm{ext}})\), we define
\begin{equation*}
I_{\ell}(\alpha):= 
x^{-\alpha} x^{\ell^{+}}
(\partial_{x})^{\ell^{+}}
x^{\alpha}\in \mathbb{C}[\alpha]:=\mathbb{C}[\alpha_{i,j}\colon (i,j)\in J].
\end{equation*}
\end{definition}
Let us recall the definition of indicial ideals.
\begin{definition}
For a cone \( \tau\subset L_{\mathrm{ext}}^{\vee}\otimes \mathbb{R} \)
and an exponent \( \beta\in\mathbb{C}^{n+r} \), 
the \emph{indicial ideal} \( \mathrm{Ind}(\tau,\beta) \)
is the ideal in \(\mathbb{C}[\alpha_{i,j}\colon (i,j)\in J]\) 
generated by
\begin{itemize}
\item \(I_{\ell}(\alpha)\) where \(
0\ne \ell\in\tau^{\vee}\cap 
L_{\mathrm{ext}}\);
\item \(\textstyle\sum_{(i,j)\in J} \langle 
\bar{m},\nu_{i,j}\rangle\alpha_{i,j} -
\langle \bar{m},\beta\rangle\) for all \(
\bar{m}\in M\times\mathbb{Z}^{r}\).
\end{itemize}
\end{definition}

There is a canonical triangulation on \(\mathrm{Conv}(\mathcal{A}_{\mathrm{ext}})\). It is
given by the maximal cones in the fan defining the toric variety \(W\), 
the total space of the rank \(r\) vector bundle over \( X \) 
whose sheaf of sections is \( \oplus_{i=1}^{r} \mathcal{O}_{X}(-E_{i})\).
(Recall that \(E_{1}+\ldots+E_{r}\) is the nef-partition on \(X\).)
We call this triangulation the 
\emph{maximal triangulation} of \( \mathrm{Conv}(\mathcal{A}_{\mathrm{ext}}) \) and
is denoted by \(\mathscr{T}_{\mathrm{max}}\).

For a smooth variety \(X\), 
the K\"{a}hler cone of \(X\) is denoted by \(\mbox{K\"{a}h}(X)\). 
If \(X\) is a \emph{smooth projective} toric variety, then
\(\mbox{K\"{a}h}(X)\) is a cone sitting inside 
\(\mathrm{H}^{2}(X,\mathbb{R})\)
whose closure 
coincides with the the closure of the ample cone \(\mathrm{Amp}(X)\).
This is a large cone since \(X\) is projective.
Let \(\overline{\mbox{K\"{a}h}(X)}\) be the closure of the K\"{a}hler cone of \(X\).

Since \(E_{1}+\cdots+E_{r}\) is a nef-partition, we have
\( \ell_{\mathrm{ext}}(\mathcal{P})_{i,0}\le 0 \) for all \(i\) and 
{\cite{1997-Hosono-Lian-Yau-maximal-degeneracy-points-of-gkz-systems}*{Proposition 6.1}}
still holds. Combining with [loc.~cit., Corollary 6.2 and Corollary 6.3], we obtain
the following corollaries.

\begin{corollary}
The leading term ideal \(\mathrm{LT}_{\omega}(I_{\mathcal{A}_{\mathrm{ext}}})\) 
with respect to the term order \(\omega\) such that
\(\mathscr{T}_{\omega}=\mathscr{T}_{\mathrm{max}}\) is 
the Stanley--Reisner ideal of \( \Sigma \).
\end{corollary}

\begin{corollary}
\(\overline{\mbox{K\"{a}h}(X)}\in G\Sigma\). 
\end{corollary}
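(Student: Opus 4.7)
The plan is to import the argument of \cite{1997-Hosono-Lian-Yau-maximal-degeneracy-points-of-gkz-systems}*{Corollary 6.3} into the present setting, where the only extra wrinkle is the presence of the extra rows/columns of $A_{\mathrm{ext}}$ corresponding to the indices $(i,0)$. First I would set up the ambient identifications: under $L\simeq L_{\mathrm{ext}}$, the Kähler cone $\overline{\mbox{K\"{a}h}(X)}$ sits inside $L_{\mathrm{ext}}^\vee\otimes\mathbb{R}$ via the duality $\overline{\mbox{K\"{a}h}(X)}=\overline{\mathrm{NE}}(X)^\vee$; this places both cones in the same ambient space as $G\Sigma$. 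By the toric cone theorem, $\overline{\mathrm{NE}}(X)=\sum_{\mathcal{P}}\mathbb{R}_{\ge 0}\,\ell(\mathcal{P})$, so $\omega\in\overline{\mbox{K\"{a}h}(X)}$ is equivalent to $\langle\omega,\ell_{\mathrm{ext}}(\mathcal{P})\rangle\ge 0$ for every primitive collection $\mathcal{P}$.

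Next I would use Corollary \ref{corollary:coefficient-vector-lifting-relation} to read off $\ell_{\mathrm{ext}}(\mathcal{P})^+=\sum_{(i,j)\in\mathcal{P}}\nu_{i,j}$ and $\ell_{\mathrm{ext}}(\mathcal{P})^-=\sum_{(i,j)\in\sigma(1)}c_{i,j}\nu_{i,j}+\sum_{i}c_{i,0}\nu_{i,0}$, where the $c_{i,j},c_{i,0}\ge 0$. For $\omega$ in the interior of $\overline{\mbox{K\"{a}h}(X)}$, the weight inequality $\langle\omega,\ell_{\mathrm{ext}}(\mathcal{P})\rangle>0$ forces the leading term of the binomial $y^{\ell^+_{\mathrm{ext}}(\mathcal{P})}-y^{\ell^-_{\mathrm{ext}}(\mathcal{P})}\in I_{\mathcal{A}}$ to be the monomial $\prod_{(i,j)\in\mathcal{P}}y_{i,j}$, which is precisely a Stanley--Reisner generator. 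The content of the imported Proposition~6.1 is that these Stanley--Reisner monomials together with the binomials arising from trivial relations already generate $\mathrm{LT}_{\omega}(I_{\mathcal{A}})$; the sign condition $\ell_{\mathrm{ext}}(\mathcal{P})_{i,0}\le 0$ (noted above the statement) is exactly what makes that proof go through unchanged in the fractional setting, because the extra $\nu_{i,0}$ columns contribute only to $\ell^-$ and never compete for the leading monomial of a primitive binomial.

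Once $\mathrm{LT}_{\omega}(I_{\mathcal{A}})$ is shown to be independent of $\omega$ in the relative interior of $\overline{\mbox{K\"{a}h}(X)}$, that cone lies inside a single cone of $G\Sigma$. To upgrade this to the equality "$\overline{\mbox{K\"{a}h}(X)}\in G\Sigma$", I would argue the converse: if $\omega$ lies in the Gröbner cone just identified then each $\prod_{(i,j)\in\mathcal{P}}y_{i,j}$ must still be the $\omega$-leading monomial of its binomial, forcing $\langle\omega,\ell_{\mathrm{ext}}(\mathcal{P})\rangle\ge 0$ for all primitive $\mathcal{P}$, hence $\omega\in\overline{\mbox{K\"{a}h}(X)}$. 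Combined with the rationality and full-dimensionality of the Kähler cone (which follows from projectivity and smoothness of $X$), this gives the corollary.

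The main obstacle is the Proposition~6.1 step: one has to check that the combinatorial argument from \cite{1997-Hosono-Lian-Yau-maximal-degeneracy-points-of-gkz-systems} which identifies $\mathrm{LT}_{\omega}(I_{\mathcal{A}})$ with the Stanley--Reisner ideal (plus the obvious $M$-relations) survives the enlargement from $A$ to $A_{\mathrm{ext}}$. Here the already noted inequality $\ell_{\mathrm{ext}}(\mathcal{P})_{i,0}\le 0$ is essential, as it keeps the $(i,0)$-variables on the $\ell^-$ side of every primitive binomial and prevents them from introducing new leading terms; once this is verified, the rest of the proof is a direct quotation of the arguments for Corollaries 6.2 and 6.3 of \cite{1997-Hosono-Lian-Yau-maximal-degeneracy-points-of-gkz-systems}.
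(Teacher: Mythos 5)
Your proposal matches the paper's argument: the paper likewise observes that the nef-partition forces \(\ell_{\mathrm{ext}}(\mathcal{P})_{i,0}\le 0\), so that Proposition 6.1 and Corollaries 6.2--6.3 of \cite{1997-Hosono-Lian-Yau-maximal-degeneracy-points-of-gkz-systems} apply verbatim to \(A_{\mathrm{ext}}\). You simply spell out in more detail the leading-term analysis that the paper imports by citation, so this is essentially the same proof.
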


Taking such a term order \(\omega\) (one can take
any element in the ample cone to achieve this), we see that 
the leading term ideal of 
\begin{eqnarray*}
\left\{ y^{\ell_{\mathrm{ext}}^{+}(\mathcal{P})}-
y^{\ell_{\mathrm{ext}}^{-}(\mathcal{P})}\colon \mathcal{P}~
\mbox{is a primitive collection}\right\}
\end{eqnarray*}
is nothing but the Stanley--Reisner ideal.
Indeed, since \( \mathcal{P} \) is primitive and \( \omega \) is ample, \( \omega.\ell(\mathcal{P}) > 0 \).
Consequently, \( y^{\ell_{\mathrm{ext}}^{+}(\mathcal{P})} \) is the 
leading term with respect to \(\omega\).
Now use Corollary \ref{corollary:coefficient-vector-lifting-relation}.
\begin{corollary}
\label{cor:minimal-grobner-basis}
The set 
\begin{eqnarray*}
\left\{ y^{\ell_{\mathrm{ext}}^{+}(\mathcal{P})}-
y^{\ell_{\mathrm{ext}}^{-}(\mathcal{P})}\colon \mathcal{P}~
\mbox{is a primitive collection}\right\}
\end{eqnarray*}
is a minimal 
Gr\"{o}bner basis of the toric ideal \(I_{\mathcal{A}_{\mathrm{ext}}}\) 
for any term order \(\omega\) with \(\mathscr{T}_{\omega}=\mathscr{T}_{\mathrm{max}}\).
Consequently, the polynomial operators in the GKZ system \(\mathcal{M}(A_{\mathrm{ext}},\beta)\) 
are generated by
the box operators associated with \(\ell_{\mathrm{ext}}(\mathcal{P})\),
where \(\mathcal{P}\) is a primitive collection of \(\Sigma\).
\end{corollary}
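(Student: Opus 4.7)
The plan is to follow \cite{1997-Hosono-Lian-Yau-maximal-degeneracy-points-of-gkz-systems}*{Corollary 6.3} almost verbatim: the only novelty in the present fractional setting is the presence of the extra columns $\nu_{i,0}$, and the crucial sign condition needed to accommodate them, $c_{i,0}\ge 0$, has already been arranged in the corollary preceding this one.

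The first task is to pin down the leading monomial of each primitive relation. From the lifted relation \eqref{equation:primitive-relation-lifting} and Corollary \ref{corollary:coefficient-vector-lifting-relation}, together with the lemma $\mathcal{P}\cap\sigma(1)=\emptyset$, the support of $\ell_{\mathrm{ext}}^{+}(\mathcal{P})$ is exactly $\mathcal{P}$, while that of $\ell_{\mathrm{ext}}^{-}(\mathcal{P})$ is contained in $\sigma(1)\sqcup\{(i,0):1\le i\le r\}$; in particular
\[
y^{\ell_{\mathrm{ext}}^{+}(\mathcal{P})} \;=\; \prod_{(i,j)\in\mathcal{P}} y_{i,j}
\]
is squarefree and disjoint from the $y_{i,0}$ variables. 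Here it is essential that $c_{i,0}\ge 0$ for each $i$, so that each $-c_{i,0}\nu_{i,0}$ contributes to $\ell_{\mathrm{ext}}^{-}$ rather than to $\ell_{\mathrm{ext}}^{+}$.

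Next I would recall the combinatorics of the maximal triangulation $\mathscr{T}_{\mathrm{max}}$: its maximal simplices are $\{\nu_{i,j}:(i,j)\in\sigma(1)\}\cup\{\nu_{i,0}:1\le i\le r\}$ as $\sigma$ ranges over the top-dimensional cones of $\Sigma$. Since $X$ is smooth, each such simplex is unimodular (subtracting $\nu_{i,0}$ from $\nu_{i,j}$ recovers a basis of $N$ together with the standard basis of $\mathbb{Z}^{r}$), so Sturmfels' theorem \cite{1991-Sturmfels-grobner-bases-of-toric-varieties} identifies the initial ideal $\mathrm{LT}_{\omega}(I_{\mathcal{A}})$ — for any $\omega$ in the relative interior of the cone $\overline{\mbox{K\"{a}h}(X)}\in G\Sigma$ supplied by the preceding corollary — with the Stanley--Reisner ideal of $\mathscr{T}_{\mathrm{max}}$. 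Direct inspection of the maximal simplices shows that the minimal non-faces of $\mathscr{T}_{\mathrm{max}}$ are precisely the primitive collections $\mathcal{P}$ of $\Sigma$ (every $\nu_{i,0}$ lies in every maximal simplex, so no minimal non-face involves an apex vector), and therefore $\mathrm{LT}_{\omega}(I_{\mathcal{A}})$ is minimally generated by the monomials $\prod_{(i,j)\in\mathcal{P}}y_{i,j}$.

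Combining the two steps, the binomials $y^{\ell_{\mathrm{ext}}^{+}(\mathcal{P})} - y^{\ell_{\mathrm{ext}}^{-}(\mathcal{P})}$ lie in $I_{\mathcal{A}}$ and their leading monomials minimally generate the initial ideal, so they form a minimal Gr\"{o}bner basis. The \emph{consequently} clause then follows from the standard translation between the toric ideal and the box-operator part of a GKZ ideal: any generating set of $I_{\mathcal{A}}$ lifts under the symbol correspondence $y_{i,j}\leftrightarrow\partial_{i,j}$ to a set of box operators that generates the polynomial part of $I(A_{\mathrm{ext}},\beta)$ modulo the Euler operators already present in the GKZ ideal. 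The main potential obstacle is verifying that the extra apex vectors $\nu_{i,0}$ do not create any new minimal non-faces beyond the primitive collections of $\Sigma$; this is the point where the nef-partition hypothesis enters crucially, via the sign condition $c_{i,0}\ge 0$ and the fact that each $\nu_{i,0}$ belongs to every maximal simplex of $\mathscr{T}_{\mathrm{max}}$.
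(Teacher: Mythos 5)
Your proposal is correct and follows essentially the same route as the paper, which simply observes that the nef-partition hypothesis forces $\ell_{\mathrm{ext}}(\mathcal{P})_{i,0}\le 0$ and then invokes Proposition 6.1 and Corollaries 6.2--6.3 of Hosono--Lian--Yau (1997); you have merely written out the content of that citation (unimodularity of the maximal simplices, identification of the initial ideal with the Stanley--Reisner ideal, minimal non-faces being the primitive collections). The only point worth making explicit is why $y^{\ell_{\mathrm{ext}}^{+}(\mathcal{P})}$ is the leading term (the paper notes $\omega.\ell(\mathcal{P})>0$ for ample $\omega$; alternatively, it follows from your setup since $y^{\ell_{\mathrm{ext}}^{-}(\mathcal{P})}$ is supported on a simplex of $\mathscr{T}_{\mathrm{max}}$ and hence lies outside the initial ideal).
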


\begin{remark}
From the corollaries above,
we see that one can also use the Gr\"{o}bner basis with
respect to a term order \(\omega\) with \(\mathscr{T}_{\omega}=\mathscr{T}_{\mathrm{max}}\) to
approximate the indicial ideal
\(\mathrm{Ind}(\tau,\beta)\) 
as well as 
the GKZ system in our fractional case.
\end{remark}

Let \( \ell_{\mathrm{ext}}(\mathcal{P}) \) be the lifting of \(\ell(\mathcal{P})\) under
the isomorphism \(L_{\mathrm{ext}}\simeq L\) as before.
\begin{lemma}
\label{lemma:ideal-indicial-compare}
Let \( \tau = \overline{\mbox{K\"{a}h}(X)} \).
The ideal generated by
\begin{itemize}
\item[\textrm{(a')}] \(I_{\ell_{\mathrm{ext}}(\mathcal{P})}(\alpha)\) 
for \(\mathcal{P}\) primitive;
\item[\textrm{(b')}] \(\textstyle\sum_{(i,j)\in J} \langle \bar{m},\nu_{i,j}
\rangle\alpha_{i,j} -\langle \bar{m},\beta\rangle\) for all \(
\bar{m}\in M\times\mathbb{Z}^{r}\);
\end{itemize}
is an ideal contained in \( \mathrm{Ind}(\tau,\beta) \). Moreover,
they have the same zero locus. 
\end{lemma}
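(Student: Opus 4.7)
The plan is to establish the ideal inclusion and the equality of zero loci in turn. The ideal inclusion is a direct consequence of the Toric Cone Theorem: $\tau^{\vee}=\overline{\mathrm{NE}}(X)=\sum_{\mathcal{P}}\mathbb{R}_{\geq 0}\,\ell(\mathcal{P})$, so under the identification $L\simeq L_{\mathrm{ext}}$ each lifted primitive relation $\ell_{\mathrm{ext}}(\mathcal{P})$ is a nonzero element of $\tau^{\vee}\cap L_{\mathrm{ext}}$, making $I_{\ell_{\mathrm{ext}}(\mathcal{P})}(\alpha)$ one of the box generators of $\mathrm{Ind}(\tau,\beta)$. The linear $(\mathrm{b}')$ generators are literally those of $\mathrm{Ind}(\tau,\beta)$.

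For the equality of zero loci, one inclusion follows from the ideal containment, so the content is the other. The first step is to simplify the primitive box operators. A direct calculation gives
\[
I_{\ell}(\alpha)\;=\;\prod_{(i,j)\in J}(\alpha_{i,j})_{\ell^{+}_{i,j}},\qquad (z)_{k}:=z(z-1)\cdots(z-k+1).
\]
By Corollary~\ref{corollary:coefficient-vector-lifting-relation} combined with the smoothness observation $\mathcal{P}\cap\sigma(1)=\emptyset$, the positive part $\ell^{+}_{\mathrm{ext}}(\mathcal{P})$ equals $\sum_{(i,j)\in\mathcal{P}}e_{i,j}$, so $I_{\ell_{\mathrm{ext}}(\mathcal{P})}(\alpha)=\prod_{(i,j)\in\mathcal{P}}\alpha_{i,j}$. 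Consequently the ideal generated by the $(\mathrm{a}')$ relations coincides with the Stanley--Reisner ideal of $\Sigma$ in the variables $\{\alpha_{i,j}\}_{(i,j)\in I}$, and any $\alpha\in V(\mathrm{a}',\mathrm{b}')$ has its support $V(\alpha):=\{(i,j)\in I:\alpha_{i,j}\neq 0\}$ contained in $\sigma_{\alpha}(1)$ for some cone $\sigma_{\alpha}\in\Sigma$.

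It then remains to show that for such $\alpha$ and any $\ell\in\tau^{\vee}\cap L_{\mathrm{ext}}$ we have $I_{\ell}(\alpha)=0$. Decomposing $\ell=\sum_{\mathcal{P}}c_{\mathcal{P}}\,\ell_{\mathrm{ext}}(\mathcal{P})$ with $c_{\mathcal{P}}\geq 0$, and using that a primitive collection is by definition not contained in any single cone, for every $\mathcal{P}$ with $c_{\mathcal{P}}>0$ there is some $(i,j)\in\mathcal{P}$ with $\rho_{i,j}\notin\sigma_{\alpha}(1)$, i.e.\ $\alpha_{i,j}=0$. The subtle point is to produce such a vanishing index that survives in the support of $\ell^{+}$ rather than cancelling against the $\ell^{-}_{\mathrm{ext}}(\mathcal{P}')$ contributions; here I would exploit the Euler relations $(\mathrm{b}')$, in particular the fractional constraints $\sum_{j}\alpha_{k,j}=-1/2$ obtained from $\bar m=(0,e_{k})$, to rule out uniform cancellation. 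Once an index $(i,j)$ with $\ell^{+}_{i,j}>0$ and $\alpha_{i,j}=0$ is identified, the factor $(\alpha_{i,j})_{\ell^{+}_{i,j}}$ vanishes, forcing $I_{\ell}(\alpha)=0$. The main obstacle is precisely this last combinatorial cancellation-control step, which I expect to follow the strategy of \cite{1997-Hosono-Lian-Yau-maximal-degeneracy-points-of-gkz-systems}, likely by induction on the size of a minimal $\mathbb{Z}_{\geq 0}$-decomposition of $\ell$ into primitive relations.
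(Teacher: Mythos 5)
Your ideal-inclusion step and your computation $I_{\ell_{\mathrm{ext}}(\mathcal{P})}(\alpha)=\prod_{(i,j)\in\mathcal{P}}\alpha_{i,j}$ (so that the $(\mathrm{a}')$ generators cut out exactly the Stanley--Reisner ideal) are both correct and consistent with what the paper uses later. But the proof is not complete: the entire content of the lemma is the nontrivial inclusion of zero loci, namely that every box generator $I_{\ell}(\alpha)$ with $0\neq\ell\in\tau^{\vee}\cap L_{\mathrm{ext}}$ vanishes on the common zero locus of $(\mathrm{a}')$ and $(\mathrm{b}')$, and at exactly that point you write that you ``expect'' the cancellation-control step to follow the strategy of \cite{1997-Hosono-Lian-Yau-maximal-degeneracy-points-of-gkz-systems}. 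That is the gap. Moreover, the route you sketch is the wrong one: decomposing $\ell=\sum_{\mathcal{P}}c_{\mathcal{P}}\,\ell_{\mathrm{ext}}(\mathcal{P})$ only gives real (at best rational) nonnegative coefficients, since an integral point of $\overline{\mathrm{NE}}(X)$ need not be a $\mathbb{Z}_{\geq 0}$-combination of primitive relations, so an induction on a ``minimal $\mathbb{Z}_{\geq 0}$-decomposition'' does not get off the ground, and the Euler relations $(\mathrm{b}')$ play no role in controlling which indices survive in $\ell^{+}$.

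What actually closes the argument — and what the paper does — is Gr\"obner-theoretic: for any term order $\omega$ in the interior of $\tau=\overline{\mbox{K\"{a}h}(X)}$ one has $\mathscr{T}_{\omega}=\mathscr{T}_{\mathrm{max}}$, and by Corollary \ref{cor:minimal-grobner-basis} the binomials $y^{\ell_{\mathrm{ext}}^{+}(\mathcal{P})}-y^{\ell_{\mathrm{ext}}^{-}(\mathcal{P})}$ form a Gr\"obner basis of $I_{\mathcal{A}}$ whose leading term ideal is the Stanley--Reisner ideal; the paper then invokes Proposition 5.6 of \cite{1997-Hosono-Lian-Yau-maximal-degeneracy-points-of-gkz-systems}, which says precisely that such a Gr\"obner basis computes the zero locus of the indicial ideal. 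Concretely, for $0\neq\ell\in\tau^{\vee}\cap L_{\mathrm{ext}}$ one has $\omega\cdot\ell>0$, so $y^{\ell^{+}}$ is the leading term of $y^{\ell^{+}}-y^{\ell^{-}}$ and hence lies in the leading term ideal, i.e.\ the support of $\ell^{+}$ contains a primitive collection $\mathcal{P}$; since your $\alpha$ satisfies $\prod_{(i,j)\in\mathcal{P}}\alpha_{i,j}=0$ and $\ell^{+}_{i,j}\geq 1$ on $\mathcal{P}$, the factor $(\alpha_{i,j})_{\ell^{+}_{i,j}}$ vanishes and $I_{\ell}(\alpha)=0$. The key point you missed is that the primitive collection must be found inside the support of $\ell^{+}$ itself (via the leading-term identification), not extracted from a cone decomposition of $\ell$, which is exactly why your cancellation problem disappears in the paper's approach.
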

\begin{proof}
Let \(\mathcal{I}'\) be the ideal generated by the elements in 
\(\mathrm{(a')}\) and \(\mathrm{(b')}\). Clearly, we have \(\mathcal{I}'\subset
\mathrm{Ind}(\tau,\beta)\). 
For any term order \(\omega\in\tau\), 
we have \(\mathscr{T}_{\omega}=
\mathscr{T}_{\mathrm{max}}\). 
Together with Corollary \ref{cor:minimal-grobner-basis}, it follows from 
{\cite{1997-Hosono-Lian-Yau-maximal-degeneracy-points-of-gkz-systems}*{Proposition 5.6}}
that the zero locus of 
\(\mathrm{Ind}(\tau,\beta)\) is the same as the one defined by \(\mathcal{I}'\).
\end{proof}

From this, we can deduce that 
\begin{proposition}
\label{prop:zero-locus}
Let \(\tau \subset \overline{\mbox{K\"{a}h}(X)}\).
There is a surjection
\begin{equation}
\mathrm{H}^{\bullet}(X,\mathbb{C}) 
\to\mathbb{C}[\alpha]\slash \mathrm{Ind}(\tau,\beta),
~D_{i,j}\mapsto \alpha_{i,j}
\end{equation}
from the cohomology ring of \(X\) to 
the indicial ring of the GKZ \(A\)-hypergoemetric system
associated with the family \(\mathcal{Y}^{\vee}\to V\).
\end{proposition}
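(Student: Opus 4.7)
The plan is to verify that both families of defining relations of $\mathrm{H}^{\bullet}(X,\mathbb{C})$ pull back into $\mathrm{Ind}(\tau,\beta)$ under the assignment $a_{i,j}\mapsto \alpha_{i,j}$, and then check surjectivity by recovering the $(i,0)$-generators from the Euler-type relations. Concretely, I would first set up the polynomial map
\[
\mathbb{C}[a_{i,j}\colon (i,j)\in I]\longrightarrow \mathbb{C}[\alpha_{i,j}\colon (i,j)\in J]\big/\mathrm{Ind}(\tau,\beta),\qquad a_{i,j}\mapsto\alpha_{i,j}.
\]
For the linear relations, the key is to feed the Euler-type generator of $\mathrm{Ind}(\tau,\beta)$ with $\bar m=(m,0)\in M\times\mathbb{Z}^{r}$. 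Since $\rho_{i,0}=\mathbf{0}$ and $\langle (m,0),\beta\rangle=0$, that generator reduces to $\sum_{(i,j)\in I}\langle m,\rho_{i,j}\rangle\alpha_{i,j}$, precisely the image of the linear cohomology relation.

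The crux is the Stanley--Reisner monomials. For a primitive collection $\mathcal{P}$ with associated cone $\sigma$, Corollary \ref{corollary:coefficient-vector-lifting-relation} describes $\ell_{\mathrm{ext}}(\mathcal{P})\in L_{\mathrm{ext}}$ in coordinates as $d_{i,j}=1$ on $\mathcal{P}$, $d_{i,j}=-c_{i,j}<0$ on $\sigma(1)$, and $d_{i,0}=-c_{i,0}\le 0$ (using the nef-partition corollary). Combined with the smoothness lemma $\mathcal{P}\cap\sigma(1)=\emptyset$, this forces $\ell_{\mathrm{ext}}^{+}(\mathcal{P})$ to be the $\{0,1\}$-vector with support exactly $\mathcal{P}$. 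A direct calculation (each factor $x_{i,j}\partial_{i,j}x_{i,j}^{\alpha_{i,j}}=\alpha_{i,j}x_{i,j}^{\alpha_{i,j}}$) yields
\[
I_{\ell_{\mathrm{ext}}(\mathcal{P})}(\alpha)=\prod_{(i,j)\in\mathcal{P}}\alpha_{i,j}.
\]
Since $\tau\subset\overline{\mbox{K\"ah}(X)}$, Kähler-Mori duality gives $\tau^{\vee}\supset\overline{\mathrm{NE}}(X)\ni\ell(\mathcal{P})$, so $\ell_{\mathrm{ext}}(\mathcal{P})\in\tau^{\vee}\cap L_{\mathrm{ext}}$, hence $\prod_{(i,j)\in\mathcal{P}}\alpha_{i,j}\in\mathrm{Ind}(\tau,\beta)$ by the very definition. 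This kills the Stanley--Reisner monomial $a_{\mathcal{P}}$, so the map descends to a ring homomorphism from $\mathrm{H}^{\bullet}(X,\mathbb{C})$.

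Surjectivity follows because $\alpha_{i,j}$ for $(i,j)\in I$ is the direct image of $D_{i,j}$, while the remaining generators $\alpha_{i,0}$ are recovered from the Euler generator with $\bar m=(0,e_{i})\in M\times\mathbb{Z}^{r}$: that relation reads $\sum_{j=0}^{n_{i}}\alpha_{i,j}=-\tfrac{1}{2}$ in the quotient, so $\alpha_{i,0}=-\tfrac{1}{2}-\sum_{j=1}^{n_{i}}\alpha_{i,j}$ is in the image. The main obstacle — really the only one requiring care — is the identification $I_{\ell_{\mathrm{ext}}(\mathcal{P})}(\alpha)=\prod_{(i,j)\in\mathcal{P}}\alpha_{i,j}$: it hinges on the fact that $\ell_{\mathrm{ext}}^{+}(\mathcal{P})$ is a $\{0,1\}$-vector, which in turn uses smoothness of $X$ (to get $\mathcal{P}\cap\sigma(1)=\emptyset$) and the nef-partition structure (to push the $(i,0)$-components to the negative part via $c_{i,0}\ge 0$). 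Once this combinatorial fact is in place, the proposition is immediate from the definition of $\mathrm{Ind}(\tau,\beta)$ — no appeal to Lemma \ref{lemma:ideal-indicial-compare} is needed for the containment direction, though that lemma reassures us that we have not missed essential generators.
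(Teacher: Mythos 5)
Your proof is correct and follows essentially the same route as the paper: the core step in both is the identification \(I_{\ell_{\mathrm{ext}}(\mathcal{P})}(\alpha)=\prod_{(i,j)\in\mathcal{P}}\alpha_{i,j}\) via Corollary \ref{corollary:coefficient-vector-lifting-relation} (using \(\mathcal{P}\cap\sigma(1)=\emptyset\) and \(c_{i,0}\ge 0\)), so that these generators of \(\mathrm{Ind}(\tau,\beta)\) together with the Euler-type linear relations reproduce the Stanley--Reisner and linear relations presenting \(\mathrm{H}^{\bullet}(X,\mathbb{C})\). You merely make explicit two points the paper leaves implicit, namely the duality \(\tau\subset\overline{\mbox{K\"{a}h}(X)}\Rightarrow\tau^{\vee}\supset\overline{\mathrm{NE}}(X)\ni\ell(\mathcal{P})\) and the recovery of \(\alpha_{i,0}\) from the Euler relation with \(\bar{m}=(0,\mathrm{e}_{i})\) for surjectivity.
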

\begin{proof}
Let \(\mathcal{I}'\) again be the ideal generated by the elements in 
\(\mathrm{(a')}\) and \(\mathrm{(b')}\) in Lemma \ref{lemma:ideal-indicial-compare}.
By Corollary \ref{corollary:coefficient-vector-lifting-relation}, 
for a primitive collection \(\mathcal{P}\), we have
\(\ell_{\mathrm{ext}}^{+}(\mathcal{P})_{i,0}=0\) for all \(i\), 
\(\ell_{\mathrm{ext}}^{+}(\mathcal{P})_{i,j}=1\) for \(\rho_{i,j}\in\mathcal{P}\),
and \(\ell_{\mathrm{ext}}^{+}(\mathcal{P})_{i,j}=0\) for \(\rho_{i,j}\notin\mathcal{P}\).
Consequently,
\begin{equation*}
I_{\ell_{\mathrm{ext}}(\mathcal{P})}(\alpha) = \alpha^{\ell_{\mathrm{ext}}^{+}(\mathcal{P})}.
\end{equation*}
When \(\mathcal{P}\) runs through all primitive collections of \(\Sigma\),
the elements \(I_{\ell_{\mathrm{ext}}(\mathcal{P})}(\alpha)\) generate exactly 
the Stanley--Reisner ideal of \(\Sigma\). From this, we see that
\begin{equation*}
\mathrm{H}^{\bullet}(X,\mathbb{C}) \simeq \mathbb{C}[\alpha_{i,j}\colon (i,j)\in J]\slash
\mathcal{I}'.
\end{equation*}
The statement follows from the fact that 
\(\mathcal{I}'\subset\mathrm{Ind}(\overline{\mbox{K\"{a}h}(X)},\beta)
\subset\mathrm{Ind}(\tau,\beta)\).
\end{proof}

In particular, this implies
\begin{corollary}
Let \(\tau\) be as in Proposition \ref{prop:zero-locus}.
The zero locus of \(\mathrm{Ind}(\tau,\beta)\)
consists of at most one point \(\alpha=(\alpha_{i,j})
\in\mathbb{C}^{p+r}\) where \(\alpha_{i,0}=-1/2\) for \(1\le i\le r\)
and \(\alpha_{i,j}=0\) for other \(i,j\).
\end{corollary}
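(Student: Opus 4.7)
The plan is to deduce the corollary directly from the preceding proposition by reducing the question to the spectrum of \(\mathrm{H}^{\bullet}(X,\mathbb{C})\) and then reading off the coordinates of the unique candidate point from the inhomogeneous Euler generators in \(\mathrm{(b')}\).

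First, I would invoke the proposition to obtain the surjective ring map \(\mathrm{H}^{\bullet}(X,\mathbb{C}) \twoheadrightarrow \mathbb{C}[\alpha]/\mathrm{Ind}(\tau,\beta)\). Dualizing this yields a closed immersion of schemes
\[
V(\mathrm{Ind}(\tau,\beta)) \hookrightarrow \mathrm{Spec}\,\mathrm{H}^{\bullet}(X,\mathbb{C}).
\]
Since \(X\) is a smooth projective toric variety, \(\mathrm{H}^{\bullet}(X,\mathbb{C})\) is a finite-dimensional graded \(\mathbb{C}\)-algebra with \(\mathrm{H}^{0}=\mathbb{C}\) and every positive-degree element nilpotent; hence the augmentation ideal is the unique maximal ideal, and \(\mathrm{Spec}\,\mathrm{H}^{\bullet}(X,\mathbb{C})\) consists of a single point set-theoretically. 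Consequently \(V(\mathrm{Ind}(\tau,\beta))\) contains at most one point, and at this point every \(\alpha_{i,j}\) with \((i,j)\in I\) must vanish, being the image of a nilpotent cohomology class.

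To pin down the remaining coordinates \(\alpha_{i,0}\), I would specialize the generators of type \(\mathrm{(b')}\) to \(\bar{m}=(0,e_{i})\in M\times\mathbb{Z}^{r}\). Using \(\nu_{i',j}=(\rho_{i',j},\delta_{1,i'},\ldots,\delta_{r,i'})\), \(\nu_{i',0}=(\mathbf{0},\delta_{1,i'},\ldots,\delta_{r,i'})\) and \(\beta=(\mathbf{0},-\tfrac{1}{2},\ldots,-\tfrac{1}{2})^{\intercal}\), all cross terms disappear by the Kronecker deltas and the relation collapses to
\[
\sum_{j=0}^{n_{i}}\alpha_{i,j}=-\tfrac{1}{2}.
\]
Substituting \(\alpha_{i,j}=0\) for \(j\geq 1\) forces \(\alpha_{i,0}=-\tfrac{1}{2}\), which is precisely the point claimed.

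I do not anticipate any serious obstacle: the substantive content has already been packaged into the preceding proposition through Corollary~\ref{corollary:coefficient-vector-lifting-relation} and Lemma~\ref{lemma:ideal-indicial-compare}. The only point requiring care is the bookkeeping between the \((i,0)\)-coordinates, which carry the \(-1/2\)-shift induced by \(\beta\), and the \((i,j)\)-coordinates for \(j\geq 1\), which correspond to genuine cohomology generators; this is however routine Kronecker-delta arithmetic rather than a conceptual hurdle.
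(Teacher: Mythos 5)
Your argument is correct and is exactly the deduction the paper intends (the paper leaves it implicit with ``In particular, this implies''): the surjection from \(\mathrm{H}^{\bullet}(X,\mathbb{C})\), which is a local Artinian ring, forces the zero locus to be at most one point with \(\alpha_{i,j}=0\) for \(j\ge 1\), and the Euler-type generators \(\mathrm{(b')}\) with \(\bar{m}=(0,e_{i})\) then give \(\alpha_{i,0}=-1/2\). No gaps.
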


\subsection{The existence of maximal degeneracy points}
We summarize the results we have obtained in the previous paragraphs.
Recall that the secondary fan \(S\Sigma\) is
a complete fan in \(L^{\vee}_{\mathrm{ext}}\otimes\mathbb{R}\)
and the toric variety \(X_{S\Sigma}\) gives rise to a compactification 
of the algebraic torus 
\begin{equation*}
(\mathbb{C}^{\ast})^{\mathcal{A}_{\mathrm{ext}}}\slash 
T_{M}\times(\mathbb{C}^{\ast})^{r}.
\end{equation*}
The Gr\"{o}bner fan \(G\Sigma\) 
gives a partial resolution \(X_{G\Sigma}\to X_{S\Sigma}\).

Let \(\tau\) be a \emph{regular} maximal cone in the space 
\(L^{\vee}_{\mathrm{ext}}\otimes\mathbb{R}\). 
It determines a unique integral basis 
\(\{\ell^{(1)},\ldots,\ell^{(p-n)}\}\) of 
\(L_{\mathrm{ext}}\) in \(\tau^{\vee}\cap L_{\mathrm{ext}}\), 
and hence a set of canonical coordinates \(z_{\tau}^{(1)},\ldots,z_{\tau}^{(p-n)}\)
on the smooth affine toric variety \(X_{\tau} = \mathrm{Hom}(\tau^{\vee}\cap 
L_{\mathrm{ext}},\mathbb{C})\).
Explicitly, we have
\begin{equation*}
z_{\tau}^{(k)}= (-1)^{\sum_{i=1}^{r}\ell^{(k)}_{i,0}}
x^{\ell^{(k)}},~1\le k\le p-n.
\end{equation*}
We can employ the argument in 
{\cite{1997-Hosono-Lian-Yau-maximal-degeneracy-points-of-gkz-systems}*{Corollary 5.12}}
to obtain the following result.
\begin{corollary}
\label{cor:extension-unique}
Let \(\tau\subset \overline{\mbox{K\"{a}h}(X)}\) be a regular cone of maximal dimension.
The GKZ system \(\mathcal{M}(A_{\mathrm{ext}},\beta)\) has at most one
power series solution of the form \(x^{\alpha}(1+g(z))\) with \(g(0)=0\) on \(X_{\tau}\).
Moreover, if this is a solution, then
\(\alpha=(\alpha_{i,j})\) with \(\alpha_{i,0}=-1/2\) for \(1\le i\le r\)
and \(\alpha_{i,j}=0\) for other \(i,j\).
\end{corollary}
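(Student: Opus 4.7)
The plan is to substitute the ansatz $u(x)=x^{\alpha}(1+g(z))$ into the generators of $I(A_{\mathrm{ext}},\beta)$ and extract, first, an algebraic constraint that pins down $\alpha$ at the leading order, then a triangular recursion that determines $g$. Writing $1+g(z)=\sum_{\mu\in\tau^{\vee}\cap L_{\mathrm{ext}}}a_{\mu}\,x^{\mu}$ with $a_{0}=1$, so that $u=\sum a_{\mu}x^{\alpha+\mu}$, I would first feed $u$ into the Euler operators: since $u$ is an eigenfunction for the $T_{M}\times(\mathbb{C}^{\ast})^{r}$-action, this immediately imposes on $\alpha$ the linear relations (b') of Lemma~\ref{lemma:ideal-indicial-compare}.

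Feeding $u$ into the box operator $\partial^{\ell^{+}}-\partial^{\ell^{-}}$ and comparing coefficients of $x^{\alpha+\nu}$ for each $\nu\in L_{\mathrm{ext}}$ (after reindexing the $\partial^{\ell^{+}}$-sum via $\mu\mapsto\mu+\ell$) yields the relation
\begin{equation*}
a_{\mu''+\ell}\,I_{\ell}(\alpha+\mu''+\ell)=a_{\mu''}\,I_{-\ell}(\alpha+\mu''),
\end{equation*}
with the convention $a_{\nu}=0$ for $\nu\notin\tau^{\vee}\cap L_{\mathrm{ext}}$. For nonzero $\ell\in\tau^{\vee}\cap L_{\mathrm{ext}}$, the choice $\mu''=-\ell$ kills the right-hand side (since $-\ell\notin\tau^{\vee}$) while reducing the left-hand side to $a_{0}\,I_{\ell}(\alpha)=I_{\ell}(\alpha)$, so $I_{\ell}(\alpha)=0$. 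Combined with the Euler relations this forces $\alpha$ into the zero locus of $\mathrm{Ind}(\tau,\beta)$, and by the preceding corollary this locus consists of at most the single point with $\alpha_{i,0}=-1/2$ and $\alpha_{i,j}=0$ for $j\neq 0$. Having pinned down $\alpha$, the same recursion at $\mu''=0$ gives $a_{\ell}=I_{-\ell}(\alpha)/I_{\ell}(\alpha+\ell)$, and iterating upward along the filtration of $\tau^{\vee}\cap L_{\mathrm{ext}}$ determines every $a_{\mu}$ uniquely from $a_{0}=1$.

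The main obstacle is to check that this recursion is truly well-posed: the Pochhammer-type factors $I_{\ell}(\alpha+\mu''+\ell)$ must be nonvanishing for the relevant $(\mu'',\ell)$, and the recursions coming from different primitive $\ell$ must be mutually consistent. By Corollary~\ref{cor:minimal-grobner-basis} it suffices to restrict $\ell$ to $\ell_{\mathrm{ext}}(\mathcal{P})$ for primitive collections $\mathcal{P}$, and regularity of $\tau$ supplies the integral basis $\{\ell^{(1)},\ldots,\ell^{(p-n)}\}$ needed to index the induction. Both the nonvanishing (using $\alpha_{i,0}=-1/2$ and $\alpha_{i,j}=0$ for $j\neq 0$, which keep the shifted entries clear of the falling-factorial zeros) and the consistency between different primitive box relations are settled exactly as in \cite{1997-Hosono-Lian-Yau-maximal-degeneracy-points-of-gkz-systems}*{Corollary 5.12}, and with these verifications in hand the corollary follows.
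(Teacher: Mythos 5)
Your determination of the exponent is sound and is exactly the route the paper intends (it simply cites Corollary 5.12 of Hosono--Lian--Yau 1997): substituting the ansatz, the Euler operators force the linear relations (b'), and the box relation evaluated at \(\mu''=-\ell\) gives \(I_{\ell}(\alpha)=0\) for every \(0\ne\ell\in\tau^{\vee}\cap L_{\mathrm{ext}}\) (here one should note that \(\tau\) is a \emph{maximal} cone, so \(\tau^{\vee}\) is strongly convex and indeed \(-\ell\notin\tau^{\vee}\)); hence \(\alpha\) lies in the zero locus of \(\mathrm{Ind}(\tau,\beta)\), which by the preceding corollary consists of at most the stated point. This also shows any two solutions share the same \(\alpha\).

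The gap is in your uniqueness argument for \(g\). You propose a forward recursion using only the box operators of the primitive relations \(\ell_{\mathrm{ext}}(\mathcal{P})\), dividing by \(I_{\ell}(\alpha+\mu''+\ell)\), and you defer both the nonvanishing of these factors and the mutual consistency of the overlapping recursions to Corollary 5.12 of the 1997 paper. That corollary contains no such statements, and the nonvanishing is in fact false in general: for \(j\ne 0\) the relevant factor is the falling factorial \(\prod_{s=0}^{\ell_{i,j}-1}(\mu_{i,j}-s)\) (since \(\alpha_{i,j}=0\)), which vanishes whenever \(0\le\mu_{i,j}\le\ell_{i,j}-1\) --- e.g.\ when two extremal classes meet a common divisor with opposite signs --- so the recursion restricted to primitive \(\ell\) can stall, and the cited corollary cannot rescue it. Two repairs: (i) the argument HLY actually use --- if two normalized solutions with exponent \(\alpha\) differed, their difference would be a nonzero series solution whose minimal exponent \(\alpha+\mu_{0}\), \(\mu_{0}\ne 0\) minimal with respect to a weight in the interior of \(\tau\), would again be a zero of \(\mathrm{Ind}(\tau,\beta)\) (the Euler relations persist since \(\mu_{0}\in L_{\mathrm{ext}}\)), contradicting the uniqueness of the zero; or (ii) stay with your coefficient identity but take \(\mu''=0\) and \(\ell=\mu\) \emph{arbitrary} (all box operators belong to the system, not only the primitive ones): then \(a_{\mu}\,I_{\mu}(\alpha+\mu)=I_{-\mu}(\alpha)\), and \(I_{\mu}(\alpha+\mu)=\prod_{\mu_{i,j}>0}\prod_{s=0}^{\mu_{i,j}-1}(\alpha_{i,j}+\mu_{i,j}-s)\) never vanishes because the \(j=0\) factors are half-integers (as \(\alpha_{i,0}=-1/2\)) and the \(j\ne 0\) factors equal \(\mu_{i,j}!\); this pins down every \(a_{\mu}\) directly, with no induction and no consistency check needed for the ``at most one'' claim.
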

Now we can prove our main result in this section.
\begin{theorem}
\label{thm:main-theorem-1}
For every toric resolution \(X_{G\Sigma'}\to X_{G\Sigma}\), there exists
at least one maximal degeneracy point in \(X_{G\Sigma'}\). 
\end{theorem}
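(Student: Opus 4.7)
The plan is to produce the maximal degeneracy point as a torus fixed point of \(X_{G\Sigma'}\) corresponding to a smooth maximal cone of \(G\Sigma'\) lying inside the K\"ahler cone of \(X\), and then verify the defining ``exactly one holomorphic normalized period'' property by combining the uniqueness from Corollary \ref{cor:extension-unique} with an explicit existence argument.

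First I would locate the candidate point. Since \(\overline{\mbox{K\"{a}h}(X)}\in G\Sigma\), and \(G\Sigma'\) refines \(G\Sigma\), the K\"ahler cone is a union of maximal cones of \(G\Sigma'\). Pick any maximal cone \(\tau\) of \(G\Sigma'\) with \(\tau\subset\overline{\mbox{K\"{a}h}(X)}\); because \(X_{G\Sigma'}\to X_{G\Sigma}\) is a resolution, \(\tau\) is smooth and regular. It yields a smooth torus fixed point \(p_{\tau}\in X_{G\Sigma'}\), together with canonical local coordinates \(z_{\tau}^{(1)},\ldots,z_{\tau}^{(p-n)}\) on the affine chart \(X_{\tau}\) coming from the integral basis of \(\tau^{\vee}\cap L_{\mathrm{ext}}\).

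Next, the uniqueness half. By Corollary \ref{cor:extension-unique}, on \(X_{\tau}\) the GKZ system \(\mathcal{M}(A_{\mathrm{ext}},\beta)\) admits at most one solution of the form \(x^{\alpha}(1+g(z_{\tau}))\) with \(g(0)=0\), and necessarily \(\alpha_{i,0}=-1/2\), \(\alpha_{i,j}=0\) for \(j\neq 0\). The normalization factor \(\bigl(\prod_{i}x_{i,0}\bigr)^{1/2}\) exactly cancels the \(x^{\alpha}\) prefactor (up to a monomial pulled back from the torus factor that has been quotiented out), so this translates directly into the statement that at most one normalized period \(\bar{\Pi}_{\gamma}\) extends holomorphically across \(p_{\tau}\).

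For existence I would take the explicit cohomology-valued series \(B_{X}^{\alpha}(x)\) from \eqref{eq:coh-valused-series}, or equivalently its scalar shadow \eqref{eq:series-solution-intro}. A direct check (box and Euler operators applied term-by-term, as recorded after Theorem \ref{prop:coh-ser-solu-gkz-intro}) shows this is a formal solution of the GKZ system. Its support sits in \(\overline{\mathrm{NE}}(X)\cap L_{\mathrm{ext}}\), and because \(\tau\subset\overline{\mbox{K\"{a}h}(X)}\) the cone \(\tau^{\vee}\) contains the Mori cone, so every monomial \(x^{\ell}\) with \(\ell\in\overline{\mathrm{NE}}(X)\cap L_{\mathrm{ext}}\) becomes an honest monomial in the local coordinates \(z_{\tau}^{(k)}\); the \(\ell=0\) term contributes the required leading \(x^{\alpha}\), and standard Gamma-ratio estimates give convergence on a neighborhood of \(p_{\tau}\). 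Pairing \(B_{X}^{\alpha}(x)\) with \(1\in\mathrm{H}^{\bullet}(X,\mathbb{C})^{\vee}\) produces exactly a normalized period of the required shape \(x^{\alpha}(1+g(z_{\tau}))\), matching the holomorphic series \(\bar{\Pi}_{\gamma_{0}}\) of \eqref{eq:period-power-expansion} once one translates regions.

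The main obstacle is the existence step, specifically: identifying the explicit analytic continuation of \(\bar{\Pi}_{\gamma_{0}}\) from the region \(R\) to a neighborhood of an arbitrary fixed point \(p_{\tau}\) of \(X_{G\Sigma'}\), and confirming that the resulting series is genuinely of the normalized form \(x^{\alpha}(1+g(z_{\tau}))\) rather than being killed by the normalization or by a change of branch. Once this is verified, together with the a priori uniqueness from Corollary \ref{cor:extension-unique}, \(p_{\tau}\) is a maximal degeneracy point in the sense of Definition \ref{def:maximal-degeneracy-points}, completing the proof.
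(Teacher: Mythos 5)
Your overall architecture matches the paper's: localize at a regular maximal cone \(\tau\) of the resolution lying inside \(\overline{\mbox{K\"ah}(X)}\), take the corresponding torus fixed point, get the ``at most one'' half from Corollary \ref{cor:extension-unique}, and then exhibit one holomorphic normalized period there. The uniqueness half is fine. The genuine gap is exactly the step you flag as the ``main obstacle'': your existence argument produces a holomorphic \emph{GKZ solution} at \(p_{\tau}\) (the \(\Gamma\)-series \eqref{eq:series-solution-intro}, equivalently a pairing of \(B_{X}^{\alpha}\)), but Definition \ref{def:maximal-degeneracy-points} requires a normalized \emph{affine period integral} \(\bar{\Pi}_{\gamma}\) extending holomorphically over \(p_{\tau}\). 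Nothing available at this point of the paper identifies the \(\Gamma\)-series with (the analytic continuation of) an actual period, and you leave that identification open, so as written the existence half is not established.

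The paper closes this more directly and never needs the \(\Gamma\)-series: it works with the genuine period \(\bar{\Pi}_{\gamma_{0}}\), already computed by residues in \eqref{eq:period-power-expansion} as \(\sum_{\ell\in\mathfrak{L}}C_{\ell}x^{\ell}\) with \(\mathfrak{L}=\{\ell\in L_{\mathrm{ext}}\colon \ell_{i,j}\ge 0,\ j\ne 0\}\). The key combinatorial point, which replaces your ``translate regions / analytic continuation'' worry, is that \(\mathfrak{L}\subset\tau^{\vee}\): since \(\tau\subset\overline{\mbox{K\"ah}(X)}\), the associated triangulation is the maximal one \(\mathscr{T}_{\mathrm{max}}\), every maximal simplex \(\mathfrak{B}\) of which contains all the \(\nu_{i,0}\), and the dual description \(\tau^{\vee}\supset\{\ell\colon\ell_{i,j}\ge 0,\ \nu_{i,j}\notin\mathfrak{B}\}\) then forces \(\mathfrak{L}\subset\tau^{\vee}\). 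Hence every \(\ell\in\mathfrak{L}\) is a nonnegative integral combination of the basis \(\ell^{(1)},\ldots,\ell^{(p-n)}\) attached to \(\tau\), so the very series defining \(\bar{\Pi}_{\gamma_{0}}\) on the region \(R\) is already a power series in the canonical coordinates \(z_{\tau}^{(k)}\) with nonzero constant term, and therefore extends holomorphically over the fixed point; combined with Corollary \ref{cor:extension-unique} this finishes the proof. If you insist on your route, you would additionally have to prove that the \(\Gamma\)-series coincides with \(\bar{\Pi}_{\gamma_{0}}\) near \(p_{\tau}\), which is extra work the paper's argument avoids.
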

\begin{proof}
Put \(\tau'=\overline{\mbox{K\"{a}h}}(X)\).
Then \(X_{\tau'}\) is a (possibly singular) affine toric variety. 
A smooth subdivision \(F\) of \(\tau'\) gives 
a toric resolution \(X_{F}\to X_{\tau'}\). Let \(\tau\) be a regular
maximal cone in \(F\).

Recall that \(\tau'\) determines the maximal triangulation \(\mathscr{T}_{\mathrm{max}}\). 
By definition, 
\begin{equation*}
\tau^{\vee}\supset \{\ell\in L_{\mathrm{ext}}\colon \ell_{i,j}\ge 0,~\nu_{i,j}
\notin \mathfrak{B}\}
\end{equation*}
for all bases \(\mathfrak{B}\in\mathscr{T}_{\mathrm{max}}\). Also, for 
all \(1\le i\le r\) and all \(\mathfrak{B}\in\mathscr{T}_{\mathrm{max}}\),
we have \(\nu_{i,0}\in\mathfrak{B}\).
It follows that 
the range \(\mathfrak{L}\) in the summation \eqref{eq:period-power-expansion} 
is contained in \(\tau^{\vee}\).
Consequently, for any \(\ell\in\mathfrak{L}\),
there exist uniquely non-negative integers \(m_{1},\ldots,m_{p-n}\)
such that 
\begin{equation*}
\ell = \sum_{k=1}^{p-n} m_{k} \ell^{(k)}.
\end{equation*}
As a function on \(X_{\tau}\), 
the normalized affine period integral \(\bar{\Pi}_{\gamma_{0}}\) becomes
\begin{equation}
\label{eq:period-extension}
\bar{\Pi}_{\gamma_{0}}(z) = 
(2\pi\sqrt{-1})^{n}\sum_{m\in \mathcal{S}} C_{\sum_{k=1}^{p-n} m_{k} \ell^{(k)}} z_{\tau}^{m},
\end{equation}
where \(\mathcal{S}=\left\{(m_{1},\ldots,m_{p-n})\in 
\mathbb{Z}^{p-n}_{\ge 0}\colon 
\ell_{i,0}\le 0~\mbox{for all}~i,~\mbox{where}~\ell=\sum_{k=1}^{p-n} m_{k}\ell^{(k)}\right\}\)
and \(C_{\ell}\) as well as \(\gamma_{0}\) are defined in 
\S\ref{subsection:affine-period-integrals}.

On one hand, from \eqref{eq:period-extension}, 
we see that \(\bar{\Pi}_{\gamma_{0}}\) extends holomorphically to 
the unique torus fixed point in \(X_{\tau}\). On the other hand, 
by Corollary \ref{cor:extension-unique}, there are no other 
normalized affine period integrals
with this property. This completes the proof.
\end{proof}

\section{Generalized Frobenius methods}

The aim of this section is to give a complete set of solutions 
to the GKZ hypergeometric system for our double covers via mirror symmetry. 
We will mainly follow the exposition in
\cite{1996-Hosono-Lian-Yau-gkz-generalized-hypergeometric-systems-in-mirror-symmetry-of-calabi-yau-hypersurfaces} and 
\cite{2006-Borisov-Horja-mellin-barnes-integrals-as-fourier-mukai-transforms}.
In what follows, let \(X=X_{\Sigma}\) be as in \S\ref{subsection:notation}.

\subsection{A series solution to GKZ systems}
We continuously assume 
the case \(\beta = \begin{bmatrix}\mathbf{0}&-1/2&\ldots&-1/2
\end{bmatrix}^{\intercal}\in \mathbb{Q}^{n+r}\). 
Let \(\alpha\in\mathbb{C}^{p+r}\) such that \(A_{\mathrm{ext}}(\alpha)=\beta\).
An obvious choice of \(\alpha\) is \(\alpha=(\alpha_{i,j})\) 
with \(\alpha_{i,j}=0\) for \(j\ne 0\) 
and \(\alpha_{i,0} = -1/2\) for \(i=1,\ldots,r\) (regarded as a column vector). 
A formal power series solution to the GKZ system 
\(\mathcal{M}(A_{\mathrm{ext}},\beta)\) 
is given by 
\begin{equation}
    \label{equation:gkz-system-solution-formal-series}
    \sum_{\ell\in L_{\mathrm{ext}}} 
    \frac{1}{\prod_{i=1}^{r}\prod_{j=0}^{n_i} \Gamma(\ell_{i,j}+\alpha_{i,j}+1)}
    x^{\ell+\alpha}.
\end{equation}
Notice that in the present case 
the formal power series \eqref{equation:gkz-system-solution-formal-series}
is non-zero and will be convergent around the origin if we choose the charge vectors
appropriately. However, in order to obtain an ``integral'' series, a renormalization is needed.
Following the treatment in 
\cite{1996-Hosono-Lian-Yau-gkz-generalized-hypergeometric-systems-in-mirror-symmetry-of-calabi-yau-hypersurfaces},
we multiply the series \eqref{equation:gkz-system-solution-formal-series}
by an overall constant factor \(\prod_{i=1}^{r}\Gamma(1+\alpha_{i,0})\).
Manipulating the identity \(\Gamma(z)\Gamma(1-z) = \pi\slash \sin(\pi z)\) 
\((z\notin\mathbb{Z})\),
we can rewrite the product \(\prod_{i=1}^{r}\Gamma(1+\alpha_{i,0})
\cdot\eqref{equation:gkz-system-solution-formal-series}\) into
the following form.
\begin{definition}
[The \(\Gamma\)-series, cf.~{\cite{1996-Hosono-Lian-Yau-gkz-generalized-hypergeometric-systems-in-mirror-symmetry-of-calabi-yau-hypersurfaces}*{Equation (3.5)}}]
Let
\begin{equation}
\label{equation:B-series-gamma-function}
\Phi^{\alpha}(x):=\sum_{\ell\in L_{\mathrm{ext}}} 
\frac{\prod_{i=1}^{r}\Gamma(-\ell_{i,0}-\alpha_{i,0})}
{\prod_{i=1}^{r}\Gamma(-\alpha_{i,0})
\prod_{i=1}^{r}\prod_{j=1}^{n_i} \Gamma(\ell_{i,j}+\alpha_{i,j}+1)}
(-1)^{\sum_{i} \ell_{i,0}}x^{\ell+\alpha}.
\end{equation}
\end{definition}

\begin{remark}
We can multiply \eqref{equation:B-series-gamma-function} 
by an overall factor \(\prod_{i=1}^{r}\prod_{j=1}^{n_{i}+1}\Gamma(\alpha_{i,j}+1)\) to get 
the usual product form.
It was pointed out in
\cite{2000-Hosono-local-mirror-symmetry-and-type-IIA-monodromy-of-calabi-yau-manifolds} 
that the Gamma function is crucial in order to get an integral, symplectic basis of 
the period integrals, although the period integrals 
obtained from the product form and the Gamma form are the same
up to a Gamma factor.
\end{remark}

\subsection{A cohomology-valued series associated with the holomorphic period}
Put \(D_{i,0}=-\sum_{j=1}^{n_{i}}D_{i,j}\) for all \(1\le i\le r\).
For each \(\ell\in L_{\mathrm{ext}}\), we define 
\begin{equation}
\label{eq:define-coh-valued-term}
\mathcal{O}^\alpha_\ell
:=\frac{\prod_{i=1}^{r}(-1)^{\ell_{i,0}}
\Gamma(-D_{i,0}-\ell_{i,0}-\alpha_{i,0})}{\prod_{i=1}^{r}\Gamma(-\alpha_{i,0})
\prod_{i=1}^r\prod_{j=1}^{n_i}\Gamma(D_{i,j}+\ell_{i,j}+\alpha_{i,j}+1)}.
\end{equation}
The quantity is understood as follows. The function \(1\slash \Gamma(z)\) is
an entire function on the complex plane. For \(j\ne 0\), we can expand
\begin{equation*}
\frac{1}{\Gamma(z+\ell_{i,j}+\alpha_{i,j}+1)}
\end{equation*}
into a power series in \(z\) around \(1\slash \Gamma(\ell_{i,j}+\alpha_{i,j}+1)\); namely
\begin{equation*}
\frac{1}{\Gamma(z+\ell_{i,j}+\alpha_{i,j}+1)} = 
1\slash \Gamma(\ell_{i,j}+\alpha_{i,j}+1) + a_{1} z + a_{2} z^{2} +\cdots.
\end{equation*}
Then for a divisor class \(D\in\mathrm{H}^{2}(X,\mathbb{Z})\), we define 
\begin{equation*}
\frac{1}{\Gamma(D+\ell_{i,j}+\alpha_{i,j}+1)} = 
\mathbf{1}\slash \Gamma(\ell_{i,j}+\alpha_{i,j}+1) + a_{1} D + a_{2} D^{2} +\cdots,
\end{equation*}
where \(\mathbf{1}\in\mathrm{H}^{0}(X,\mathbb{Z})\) is the Poincar\'{e}
dual of the fundamental class.
This is an honest element in \(\mathrm{H}^{\bullet}(X,\mathbb{C})\) since
\(D\) is nilpotent. 
For \(j=0\), we consider the \emph{deformed} coefficient
\begin{equation*}
\frac{\Gamma(-z-\ell_{i,0}-\alpha_{i,0})}{\Gamma(-\alpha_{i,0})}
\end{equation*}
and expand it into a power series in \(z\) 
around \(z=0\); namely
\begin{equation*}
\frac{\Gamma(-z-\ell_{i,0}-\alpha_{i,0})}{\Gamma(-\alpha_{i,0})}
=\frac{\Gamma(-\ell_{i,0}-\alpha_{i,0})}{\Gamma(-\alpha_{i,0})}+a_{1}z+a_{2}z^{2}+\cdots.
\end{equation*}
For any divisor class \(D\in\mathrm{H}^{2}(X,\mathbb{Z})\), we define 
\begin{equation*}
\frac{\Gamma(-D-\ell_{i,0}-\alpha_{i,0})}{\Gamma(-\alpha_{i,0})}
=\frac{\Gamma(-\ell_{i,0}-\alpha_{i,0})\cdot\mathbf{1}}{\Gamma(-\alpha_{i,0})}
+a_{1}D+a_{2}D^{2}+\cdots.
\end{equation*}
Consequently, 
\(\mathcal{O}_{\ell}^{\alpha}\) is a \emph{well-defined} 
element in \(\mathrm{H}^{\bullet}(X,\mathbb{C})\).


\begin{remark}
Note that \(1\slash \Gamma(w+D)\) is \emph{divisible} by \(D\)
if \(w\in\mathbb{Z}_{\le 0}\).
\end{remark}

The following lemma follows from the multiplicative 
property of the Gamma function.
\begin{lemma}
Let \(w\in\mathbb{C}\). Then for any \(D\in\mathrm{H}^{\bullet}(X,\mathbb{Z})\),
\begin{equation*}
\frac{(w+D)}{\Gamma(1+w+D)} = \frac{1}{\Gamma(w+D)}.
\end{equation*}
\end{lemma}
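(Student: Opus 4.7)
The plan is to reduce this identity to the classical functional equation $\Gamma(1+z) = z\,\Gamma(z)$, reinterpreted through the paper's convention that cohomology-valued Gamma expressions are defined by Taylor substitution of nilpotent elements into entire functions.

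First I would observe that both $z \mapsto \tfrac{1}{\Gamma(z)}$ and $z \mapsto \tfrac{z}{\Gamma(1+z)}$ are entire functions on $\mathbb{C}$: the former is the reciprocal Gamma function (which is entire), and the latter is $z$ times an entire function. The functional equation asserts that they agree on $\mathbb{C}\setminus\{0,-1,-2,\ldots\}$, and since both are entire they coincide everywhere. Call this common entire function $g(z)$.

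Next I would unwind the paper's convention: for an entire function $f$ and a divisor class $D\in\mathrm{H}^{2}(X,\mathbb{Z})$, one interprets
$$f(w+D) \;:=\; \sum_{k\ge 0}\frac{f^{(k)}(w)}{k!}\,D^{k},$$
a finite sum since $D$ is nilpotent in $\mathrm{H}^{\bullet}(X,\mathbb{Z})$. Since the two presentations $\tfrac{z}{\Gamma(1+z)}$ and $\tfrac{1}{\Gamma(z)}$ define the same entire function $g$, their Taylor expansions about $z=w$ are termwise identical, and therefore they produce the same class in $\mathrm{H}^{\bullet}(X,\mathbb{Z})$ after substituting $D$ for the shift variable. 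This is precisely the claimed identity.

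I do not foresee any real obstacle; the proof is essentially a bookkeeping exercise. The one point to verify is compatibility with the paper's piecewise definition of $\mathcal{O}^{\alpha}_{\ell}$ (which for $j=0$ uses the deformed ratio $\Gamma(z+\alpha_{i,0}+1)/\Gamma(z+\ell_{i,0}+\alpha_{i,0}+1)$), but by Remark \ref{remark:prod} that ratio is a polynomial in $z$ hence entire, so the Taylor-substitution mechanism is unambiguous in every case and the argument applies uniformly.
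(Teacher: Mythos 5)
Your proposal is correct and follows essentially the same route as the paper: reduce to the functional equation $\Gamma(1+z)=z\,\Gamma(z)$, note that both sides are entire in the shift variable, and conclude via the Taylor-substitution convention for nilpotent classes. Your write-up merely makes explicit the entirety and well-definedness points that the paper's two-line proof leaves implicit.
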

\begin{proof}
Fix \(w\in \mathbb{C}\), we have \(\Gamma(1+w+z)=(w+z)\Gamma(w+z)\)
as a function in \(z\). Therefore, 
\begin{equation*}
\frac{(w+z)}{\Gamma(1+w+z)} = \frac{1}{\Gamma(w+z)}.
\end{equation*}
\end{proof}

We now define the cohomology-valued series. Recall that we have an isomorphy
\(L_{\mathrm{ext}}\simeq L\) between the lattice relation of 
\(A_{\mathrm{ext}}\) and that of \(A\). The Mori cone \(\overline{\mathrm{NE}}(X)\)
can thus be regarded as a cone in \(L_{\mathrm{ext}}\)
which is also denoted by \(\overline{\mathrm{NE}}(X)\).
\begin{definition}
[cf.~\cites{1996-Hosono-Lian-Yau-gkz-generalized-hypergeometric-systems-in-mirror-symmetry-of-calabi-yau-hypersurfaces,2006-Borisov-Horja-mellin-barnes-integrals-as-fourier-mukai-transforms}]
\label{definition:b-series-definition}
We define the cohomology-valued \(B\) series to be
\begin{equation}
B_{X}^{\alpha}(x):=\left(\sum_{\ell\in \overline{\mathrm{NE}}(X)\cap L_{\mathrm{ext}}} 
\mathcal{O}_{\ell}^{\alpha} x^{\ell+\alpha}\right)
\exp\left(\sum_{i=1}^{r}\sum_{j=0}^{n_i}(\log x_{i,j}) D_{i,j}\right).
\end{equation}
\(B_{X}^{\alpha}\) is regarded as an element in 
\(\mathbb{C}\llbracket x_{i,j}\rrbracket
\otimes_{\mathbb{C}}\mathrm{H}^{\bullet}(X,\mathbb{C})\).
\end{definition}

\begin{proposition}
We have
\(\mathcal{O}_{\ell}^{\alpha}=0\) for \(\ell\in L_{\mathrm{ext}}
\setminus\overline{\mathrm{NE}}(X)\).
\end{proposition}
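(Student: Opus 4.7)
The plan is to factor $\mathcal{O}_\ell^\alpha$ as a multiple of the monomial $\prod_{(i,j)\in I^-(\ell)} D_{i,j}$ in $H^\bullet(X,\mathbb{C})$, where $I^-(\ell):=\{(i,j)\in I : \ell_{i,j}\le -1\}$ is the negative support of $\ell$ away from the $(i,0)$ coordinates, and then to show that $I^-(\ell)$ must contain a primitive collection whenever $\ell\notin\overline{\mathrm{NE}}(X)$. The Stanley--Reisner relations then annihilate the monomial.

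For the divisibility, consider an index $(i,j)$ with $j\ne 0$ (so $\alpha_{i,j}=0$) and $\ell_{i,j}\le -1$. Iterating $\Gamma(z+1)=z\Gamma(z)$ gives
\[
\frac{1}{\Gamma(D_{i,j}+\ell_{i,j}+1)}
=\frac{D_{i,j}(D_{i,j}-1)\cdots(D_{i,j}+\ell_{i,j}+1)}{\Gamma(D_{i,j}+1)},
\]
which is manifestly divisible by $D_{i,j}$. For $j=0$, the paired ratio $\Gamma(D_{i,0}+1/2)/\Gamma(D_{i,0}+\ell_{i,0}+1/2)$ is, by iterating the same recursion, either a polynomial in $D_{i,0}$ with nonzero constant term (when $\ell_{i,0}\le 0$, cf.~Remark~\ref{remark:prod}) or a product of units $1/(D_{i,0}+k-1/2)$ for $k=1,\ldots,\ell_{i,0}$ in $H^\bullet(X,\mathbb{C})$ (when $\ell_{i,0}\ge 1$). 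Hence the $(i,0)$ factors contribute no vanishing, and $\mathcal{O}_\ell^\alpha$ is divisible by $\prod_{(i,j)\in I^-(\ell)} D_{i,j}$.

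The main step is the combinatorial-geometric claim: \emph{if $I^-(\ell)$ contains no primitive collection, then $\ell\in\overline{\mathrm{NE}}(X)$.} Since $\Sigma$ is a smooth fan and primitive collections are exactly its minimal non-faces, such an $I^-(\ell)$ is the ray set of some cone of $\Sigma$; enlarge to a maximal cone $\tau$. For any nef divisor $D=\sum a_{i,j} D_{i,j}$, there exists $m\in M$ with $a_{i,j}=-\langle m,\rho_{i,j}\rangle$ for $(i,j)\in\tau(1)$ and $a_{i,j}+\langle m,\rho_{i,j}\rangle\ge 0$ for all $(i,j)\in I$. Using the defining relation $\sum_{(i,j)\in I}\ell_{i,j}\rho_{i,j}=0$ of $L\simeq L_{\mathrm{ext}}$,
\[
D\cdot\ell=\sum_{(i,j)\in I}(a_{i,j}+\langle m,\rho_{i,j}\rangle)\,\ell_{i,j}
=\sum_{(i,j)\notin\tau(1)}(a_{i,j}+\langle m,\rho_{i,j}\rangle)\,\ell_{i,j}\ge 0,
\]
because $\ell_{i,j}\ge 0$ for all $(i,j)\notin\tau(1)$ by assumption. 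As $D$ was an arbitrary nef divisor, the toric cone theorem forces $\ell\in\overline{\mathrm{NE}}(X)$.

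Combining, if $\ell\notin\overline{\mathrm{NE}}(X)$, then $I^-(\ell)$ contains a primitive collection $\mathcal{P}$, the Stanley--Reisner relation yields $\prod_{(i,j)\in\mathcal{P}} D_{i,j}=0$ in $H^\bullet(X,\mathbb{C})$, and divisibility forces $\mathcal{O}_\ell^\alpha=0$. I expect the cone-theoretic step to be the main obstacle: one must align the purely combinatorial statement on $I^-(\ell)$ with the support-function description of nef toric divisors and handle the identification $L\simeq L_{\mathrm{ext}}$ carefully, so that the $(i,0)$ coordinates (which play no role in the pairing with toric divisors) do not spoil the Mori-cone conclusion. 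The Gamma-function divisibility is routine, and once the geometric lemma is in place the Stanley--Reisner conclusion is immediate.
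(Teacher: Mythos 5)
Your proposal is correct, and its overall skeleton (divisibility of $\mathcal{O}_\ell^\alpha$ by the toric divisors indexed by the negative support, plus the claim that this support contains a primitive collection, plus Stanley--Reisner) is the same as the paper's; but you establish the key combinatorial claim by a genuinely different route. The paper argues directly: for $\ell\notin\overline{\mathrm{NE}}(X)$ it picks an ample $B$ with $B.\ell<0$, observes that $y^{\ell^-}$ is then the leading term of the binomial $y^{\ell^+}-y^{\ell^-}\in I_{\mathcal{A}}$, and invokes its earlier Gr\"obner-fan result (the minimal Gr\"obner basis coming from primitive relations, whose leading term ideal is the Stanley--Reisner ideal) to conclude that $\operatorname{supp}(\ell^-)$ contains a primitive collection. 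You instead prove the contrapositive with no Gr\"obner input: if $I^-(\ell)$ contains no primitive collection it spans a cone of the (smooth, hence simplicial) fan, you enlarge it to a maximal cone $\tau$, use the Cartier data $m_\tau$ of an arbitrary nef divisor together with $m_\tau\in\Delta_D$ to get $D\cdot\ell\ge 0$, and conclude $\ell\in\overline{\mathrm{NE}}(X)$ by the duality $\overline{\mathrm{NE}}(X)=\mathrm{Nef}(X)^{\vee}$. Both arguments ultimately rest on this duality (the paper uses it implicitly to produce the ample $B$), so the real difference is the middle step: the paper's proof is shorter given the Gr\"obner machinery it has already set up and keeps the exposition aligned with the initial-ideal theme used for the indicial ideals, while yours is self-contained at the level of standard toric geometry (support functions of nef Cartier divisors and minimal non-faces), and it also makes explicit the routine but worth-recording point that the $(i,0)$ factors of $\mathcal{O}_\ell^\alpha$ are units or polynomials with nonzero constant term and so never obstruct the vanishing. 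Your Gamma-function manipulations are valid in $\mathrm{H}^\bullet(X,\mathbb{C})$ since they come from identities of meromorphic functions expanded at a nilpotent argument, exactly as in the paper's Remark \ref{remark:prod}.
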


\begin{proof}
Let \(\ell=(\ell_{i,j})\in L_{\mathrm{ext}}\setminus \overline{\mathrm{NE}}(X)\). 
We claim that there exists a primitive collection 
\(\mathcal{P}\subset \{(i,j)\in J\colon \ell_{i,j}< 0\}\). 
Assuming the claim, we see that
\begin{equation*}
    \prod_{(i,j)\in\mathcal{P}} D_{i,j} 
\end{equation*}
appears in the numerator of \(\mathcal{O}^{\alpha}_{\ell}\) and hence 
\(\mathcal{O}_{\ell}^{\alpha}=0\) in \(\mathrm{H}^{\bullet}(X,\mathbb{C})\).

To prove the claim, we choose an ample divisor \(B\) with \(B.\ell<0\). 
\(B\) corresponds to a term order on \(\mathbb{C}[y_{i,j}]\), 
the homogeneous coordinate ring of \(X\). 
Write \(\ell=\ell^+-\ell^-\) as before. 
Then \(B.(\ell^+-\ell^-)<0\) and \(y^{\ell^-}\) will be the leading term 
of \(y^{\ell^+}-y^{\ell^-}\) with respect to \(B\).
Hence \(y^{\ell^-}\) is contained in the Stanley--Reisner ideal 
of \(X\). 

Using the fact that \(\alpha_{i,j}=0\) for all \(i\) and \(j\ne 0\),
we see that \(\mathcal{O}_{\ell}^{\alpha}\) is divisible by \(D_{i,j}\) for 
those \((i,j)\) such that \(\ell_{i,j}<0\) and hence it is divisible by 
\(\prod_{(i,j)\in\mathcal{P}}D_{i,j}\) for some
primitive collection \(\mathcal{P}\) of \(X\). This establishes the claim.
\end{proof}

This proposition allows us to rewrite
\begin{equation*}
B_{X}^{\alpha}(x)=\left(\sum_{\ell\in L_{\mathrm{ext}}} 
\mathcal{O}_{\ell}^{\alpha} x^{\ell+\alpha}\right)
\exp\left(\sum_{i=1}^{r}\sum_{j=0}^{n_i}(\log x_{i,j}) D_{i,j}\right).
\end{equation*}

\begin{proposition}[cf.~{\cite{2006-Borisov-Horja-mellin-barnes-integrals-as-fourier-mukai-transforms}*{Proposition 2.17}}]
\label{prop:coh-series-solu-gkz}
We regard \(B^{\alpha}_{X}(x)\) as an element in 
\(\mathbb{C}\llbracket x_{i,j}\rrbracket
\otimes_{\mathbb{C}}\mathrm{H}^{\bullet}(X,\mathbb{C})\).
For any \(h\in\mathrm{H}^{\bullet}(X,\mathbb{C})^{\vee}\),
the pairing \(\langle B^{\alpha}_{X}(x),h\rangle\in 
\mathbb{C}\llbracket x_{i,j}\rrbracket\)
is annihilated by \(\mathcal{M}(A_{\mathrm{ext}},\beta)\).
\end{proposition}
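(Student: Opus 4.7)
The plan is to verify directly that $B^{\alpha}_{X}(x)$, viewed as a power series with coefficients in $\mathrm{H}^{\bullet}(X,\mathbb{C})$, is annihilated by each generator of $I(A_{\mathrm{ext}},\beta)$; the statement for the scalar pairing $\langle B^{\alpha}_{X}(x),h\rangle$ then follows by linearity. Reading $x_{i,j}^{D_{i,j}}$ as $\exp(D_{i,j}\log x_{i,j})$ and writing $\alpha'_{i,j}:=\alpha_{i,j}+D_{i,j}$, the series takes the compact form
\begin{equation*}
B^{\alpha}_{X}(x)=\sum_{\ell\in L_{\mathrm{ext}}}\mathcal{O}^{\alpha}_{\ell}\,x^{\ell+\alpha'},
\end{equation*}
and a single derivative $\partial_{i,j}$ applied to $x_{i,j}^{\ell_{i,j}+\alpha'_{i,j}}$ multiplies by $\ell_{i,j}+\alpha'_{i,j}$ while lowering the exponent by $1$.

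For an Euler operator $\mathscr{E}_{k}-\beta_{k}$, acting on each summand multiplies by $\sum_{(i,j)\in J}\langle\nu_{i,j},e_{k}\rangle(\ell_{i,j}+\alpha_{i,j}+D_{i,j})$. The $\ell$-part vanishes because $\ell\in\ker A_{\mathrm{ext}}$; the $\alpha$-part equals $\beta_{k}$ by the choice of $\alpha$; and the $D$-part vanishes in cohomology. For the first $n$ components this is the toric relation $\sum_{\rho\in\Sigma(1)}\langle m,\rho\rangle D_{\rho}=0$ for $m\in M$, and for the last $r$ components it is $D_{i,0}+\sum_{j=1}^{n_{i}}D_{i,j}=0$, which is built into the definition of $D_{i,0}$.

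The box operator computation is the heart of the argument. After reindexing $\ell=m+\ell^{\pm}$, the coefficient of $x^{m+\alpha'}$ in $\partial^{\ell^{\pm}}B^{\alpha}_{X}$ equals
\begin{equation*}
\mathcal{O}^{\alpha}_{m+\ell^{\pm}}\prod_{(i,j)\in J}\prod_{k=0}^{\ell^{\pm}_{i,j}-1}\bigl(m_{i,j}+\ell^{\pm}_{i,j}+\alpha'_{i,j}-k\bigr).
\end{equation*}
I then apply the preceding lemma $(w+D)/\Gamma(1+w+D)=1/\Gamma(w+D)$ iteratively on each index $(i,j)$: the $\ell^{\pm}_{i,j}$ linear factors telescope against the factor $\Gamma(m_{i,j}+\ell^{\pm}_{i,j}+\alpha'_{i,j}+1)$ sitting in the denominator of $\mathcal{O}^{\alpha}_{m+\ell^{\pm}}$, collapsing both expressions to $\mathcal{O}^{\alpha}_{m}$ independently of the sign choice. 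Consequently $(\partial^{\ell^{+}}-\partial^{\ell^{-}})B^{\alpha}_{X}=0$.

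The one place requiring genuine care is that the individual factors $\Gamma(\text{integer}+D_{i,j})$ can have poles at $D_{i,j}=0$, even though the $D_{i,j}$ are nilpotent. The cure is to work exclusively with the meaningful Taylor expansions set up around \eqref{eq:define-coh-valued-term}: the expansions of $1/\Gamma(D+\cdot)$ for $j\neq 0$, and of the deformed quotient $\Gamma(D+\alpha_{i,0}+1)/\Gamma(D+\ell_{i,0}+\alpha_{i,0}+1)$ for $j=0$, which by Remark~\ref{remark:prod} reduces to a finite polynomial in $D_{i,0}$ whenever $\ell_{i,0}\leq 0$, as always holds for $\ell\in\overline{\mathrm{NE}}(X)$. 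Telescoping in the direction $\Gamma\to 1/\Gamma$ keeps every intermediate expression in $\mathrm{H}^{\bullet}(X,\mathbb{C})[[x]][\log x]$, so the formal identities above are genuine; pairing with any $h$ then finishes the proof.
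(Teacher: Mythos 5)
Your proof is correct and follows essentially the same route as the paper's: the Euler operators are handled by splitting $\ell_{i,j}+\alpha_{i,j}+D_{i,j}$ into the $\ker A_{\mathrm{ext}}$ part, the $\beta$ part, and the cohomological linear relations, while the box operators are handled by using the lemma $(w+D)/\Gamma(1+w+D)=1/\Gamma(w+D)$ to collapse $\partial^{\ell^{\pm}}$ applied to each term down to the same coefficient $\mathcal{O}^{\alpha}_{m}$, exactly as the paper shifts $\mathcal{O}^{\alpha}_{\ell}$ to $\mathcal{O}^{\alpha}_{\ell-l^{\pm}}$ and matches the two sums. The only presentational difference is that you compare coefficients of a fixed monomial $x^{m+\alpha'}$ (implicitly using that the cosets $L_{\mathrm{ext}}-\ell^{+}$ and $L_{\mathrm{ext}}-\ell^{-}$ coincide, which the paper states explicitly), which is an immaterial variation.
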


\begin{proof}
For simplicity, we drop the 
subscripts \(\alpha\) and \(X\) in \(B_{X}^{\alpha}(x)\).
For each variable \(x_{i,j}\), we have
\begin{align}
\label{equation:B-series-differentiate-x-i-j}
\begin{split}
\frac{\partial B(x)}{\partial x_{i,j}}&=\sum_{\ell\in L_{\mathrm{ext}}} 
\left(\frac{\ell_{i,j}+\alpha_{i,j}+D_{i,j}}{x_{i,j}}\right)
\mathcal{O}_{\ell}^{\alpha} x^{\ell+\alpha}
\exp\left(\sum_{i=1}^{r}\sum_{j=0}^{n_i}(\log x_{i,j}) D_{i,j}\right)\\
&=\left(\frac{1}{x_{i,j}}\right)
\sum_{\ell\in L_{\mathrm{ext}}}(\ell_{i,j}+\alpha_{i,j}+D_{i,j}) 
\mathcal{O}_{\ell}^{\alpha} x^{\ell+\alpha}\exp\left(\sum_{i=1}^{r}\sum_{j=0}^{n_i}
(\log x_{i,j}) D_{i,j}\right).
\end{split}
\end{align}
Hence the series \(\langle B^{\alpha}_{X}(x),h\rangle\)
is annihilated by the Euler operators.
Now we examine the box operators. 
We write
\begin{align}
\begin{split}
\eqref{equation:B-series-differentiate-x-i-j}=
\left(\frac{1}{x_{i,j}}\right)\sum_{\ell\in L_{\mathrm{ext}}}
\mathcal{O}_{\ell-\mathrm{e}_{i,j}}^{\alpha} x^{\ell+\alpha}
\exp\left(\sum_{i=1}^{r}\sum_{j=0}^{n_i}(\log x_{i,j}) D_{i,j}\right),
\end{split}
\end{align}
where \(\{\mathrm{e}_{i,j}\colon (i,j)\in J\}\) 
is the standard basis of \(\mathbb{Z}^{p+r}\). Here
we extend the definition of \(\mathcal{O}_{\xi}^{\alpha}\) to
any element \(\xi\in\mathbb{Z}^{p+r}\) by \eqref{eq:define-coh-valued-term}.

For \(l=l^{+}-l^{-}\in L_{\mathrm{ext}}\), we have
\begin{align}
\label{equation:B-series-1st}
\prod_{l_{i,j}>0}\left(\frac{\partial}{\partial x_{i,j}}\right)^{l_{i,j}}B(x)=
\sum_{\ell\in L_{\mathrm{ext}}}\mathcal{O}_{\ell-l^{+}}^{\alpha} 
x^{\ell+\alpha-l^{+}}\exp\left(\sum_{i=1}^{r}\sum_{j=0}^{n_i}(\log x_{i,j}) D_{i,j}\right).
\end{align}
Similarly, we have
\begin{align}
\label{equation:B-series-2st}
\prod_{l_{i,j}<0}\left(\frac{\partial}{\partial x_{i,j}}\right)^{-l_{i,j}}B(x)=
\sum_{\ell\in L_{\mathrm{ext}}}\mathcal{O}_{\ell-l^{-}}^{\alpha} 
x^{\ell+\alpha-l^{-}}\exp\left(\sum_{i=1}^{r}\sum_{j=0}^{n_i}(\log x_{i,j}) D_{i,j}\right).
\end{align}
Note that the ranges of the summations appeared 
on the right hand side of \eqref{equation:B-series-1st} and \eqref{equation:B-series-2st}
are the same.
Indeed, for any \(\ell\in L_{\mathrm{ext}}\) and \(l\in L_{\mathrm{ext}}\), 
there exists \(\ell'\in L_{\mathrm{ext}}\) such that
\(\ell-l^{+}=\ell'-l^{-}\) since \(l=l^{+}-l^{-}\in L_{\mathrm{ext}}\). 
This implies that \(\Box_{l} B(x)=0\).
\end{proof}

\begin{corollary}
\label{cor:full-set-soultion}
Assume \(X\) is smooth as before. When \(h\in\mathrm{H}^{\bullet}(X,\mathbb{C})^{\vee}\)
runs through a basis of \(\mathrm{H}^{\bullet}(X,\mathbb{C})^{\vee}\),
the series \(\langle B^{\alpha}_{X}(x),h\rangle\)
give a complete set of solution to \(\mathcal{M}(A_{\mathrm{ext}},\beta)\). 
\end{corollary}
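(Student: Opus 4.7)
The plan is to reduce the statement to two ingredients: (i) each pairing $\langle B^{\alpha}_{X}(x),h\rangle$ solves the GKZ system, and (ii) as $h$ ranges over a basis of $\mathrm{H}^{\bullet}(X,\mathbb{C})^{\vee}$, these pairings are linearly independent and their number matches the holonomic rank of $\mathcal{M}(A_{\mathrm{ext}},\beta)$. The first ingredient is exactly Proposition \ref{prop:coh-series-solu-gkz} above; the second splits into a linear-independence step and a counting step.

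For linear independence, I would isolate the factor $\exp\bigl(\sum_{i,j}(\log x_{i,j}) D_{i,j}\bigr)$ inside $B^{\alpha}_{X}(x)$ and expand it in powers of the logarithms $\log x_{i,j}$. The coefficient of a given log-monomial is a specific element of $\mathrm{H}^{\bullet}(X,\mathbb{C})$; since the divisor classes $D_{i,j}$ generate the cohomology ring of $X$ (as recalled in the section on the cohomology of toric manifolds), the assignment $h\mapsto \langle B^{\alpha}_{X}(x),h\rangle$ separates the elements of any basis of $\mathrm{H}^{\bullet}(X,\mathbb{C})^{\vee}$: different $h$'s pick off different log-polynomial leading behaviors, forcing the pairings to be linearly independent as formal series.

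For the counting step I would invoke the Gel'fand--Kapranov--Zelevinski\u{\i} rank formula, $\mathrm{rank}\,\mathcal{M}(A_{\mathrm{ext}},\beta)=\mathrm{vol}(A_{\mathrm{ext}})$, which holds for generic $\beta$ and extends to all $\beta$ whenever the toric coordinate ring of $A_{\mathrm{ext}}$ is Cohen--Macaulay (Matusevich--Miller--Walther); the latter is guaranteed by the nef-partition structure. Under the smoothness of $X$, the maximal triangulation $\mathscr{T}_{\mathrm{max}}$, coming from the fan of the total space $W$ of $\bigoplus_{i}\mathcal{O}_{X}(E_{i})$, is unimodular, so $\mathrm{vol}(A_{\mathrm{ext}})$ equals the number of top-dimensional simplices in $\mathscr{T}_{\mathrm{max}}$, which in turn is the number of maximal cones in the fan of $W$, i.e., the topological Euler characteristic $\chi(W)$. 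Since $W$ retracts onto $X$ and smooth projective toric varieties have vanishing odd Betti numbers, $\chi(W)=\chi(X)=\dim_{\mathbb{C}}\mathrm{H}^{\bullet}(X,\mathbb{C})$, closing the count.

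The second assertion, that for $n$ odd these pairings give the full set of period integrals, is immediate from Remark \ref{remark:period-cycle}: the vanishing $\mathrm{H}^{n}(X^{\vee},\mathbb{C})=0$ for smooth toric $X^{\vee}$ identifies the full middle cohomology of $Y^{\vee}$ with $\mathrm{H}^{n}(X^{\vee}\setminus\mathcal{R},\mathscr{L}_{\chi_{1}})$, the Poincar\'{e} dual of the cycles appearing in the affine period integrals. The main technical obstacle I foresee is a rigorous justification of $\mathrm{rank}\,\mathcal{M}(A_{\mathrm{ext}},\beta)=\mathrm{vol}(A_{\mathrm{ext}})$ at the specific fractional $\beta=(\mathbf{0},-1/2,\ldots,-1/2)^{\intercal}$: one must either directly check that this $\beta$ avoids the resonant locus associated to the faces of $\mathrm{Conv}(A_{\mathrm{ext}})$, or appeal to the Cohen--Macaulayness of the semigroup ring; thereafter the combinatorial identification of $\mathrm{vol}(A_{\mathrm{ext}})$ with $\chi(X)$ via the bundle $W$ is the geometric crux.
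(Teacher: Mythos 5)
Your proposal is correct, and its skeleton coincides with the paper's proof: Proposition \ref{prop:coh-series-solu-gkz} supplies the solutions, linear independence plus the dimension count \(\mathrm{rank}\,\mathcal{M}(A_{\mathrm{ext}},\beta)=\mathrm{vol}_{r+n}(A_{\mathrm{ext}})=\chi(X)=\dim\mathrm{H}^{\bullet}(X,\mathbb{C})\) gives completeness, and Remark \ref{remark:period-cycle} gives the odd-\(n\) statement. Where you diverge is in how the middle steps are justified, and in both places you are more explicit than the paper. The paper dismisses linear independence with ``it is clear'' and imports the identity \(\mathrm{vol}_{r+n}(A_{\mathrm{ext}})=\chi(X)\) from Proposition 1.2 of the companion double-cover paper, whereas you extract it directly from the maximal triangulation \(\mathscr{T}_{\mathrm{max}}\): smoothness of \(X\) makes it unimodular, its maximal simplices biject with the maximal cones of \(\Sigma\), and these are counted by \(\chi(X)\) --- a self-contained argument that buys independence from the citation. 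You also flag, correctly, the one point the paper silently assumes: that the holonomic rank equals the normalized volume at the specific fractional \(\beta\), not merely for generic \(\beta\); since the rank can only jump up at special parameters, this is precisely the inequality needed for completeness, and appealing to Matusevich--Miller--Walther via Cohen--Macaulayness is a legitimate way to close it. Two small repairs: Cohen--Macaulayness is not a consequence of the nef-partition structure per se but of the smoothness hypothesis --- the unimodular triangulation \(\mathscr{T}_{\mathrm{max}}\) makes the semigroup generated by the \(\nu_{i,j}\) saturated, so the semigroup ring is normal and hence Cohen--Macaulay by Hochster (alternatively, check non-resonance of \(\beta=(\mathbf{0},-1/2,\ldots,-1/2)^{\intercal}\) directly); and your linear-independence sketch should record that \(\mathcal{O}_{0}^{\alpha}\) is a unit in \(\mathrm{H}^{\bullet}(X,\mathbb{C})\) and that monomials in the \(D_{i,j}\) span the cohomology ring, so the \(x^{\alpha}\)-coefficient of \(\langle B^{\alpha}_{X}(x),h\rangle\), read off log-monomial by log-monomial, already determines \(h\). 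With those points made precise your argument is complete.
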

\begin{proof}
It is clear that all the coefficients are linearly independent. On one hand,
for a general \(x\), we know that the solution space to 
\(\mathcal{M}(A_{\mathrm{ext}},\beta)\) 
has dimensional \(\mathrm{vol}_{r+n}(A_{\mathrm{ext}})\),
where \(\mathrm{vol}_{r+n}\) denotes the normalized volume in \(\mathbb{R}^{n+r}\).
On the other hand, by {\cite{2020-Hosono-Lee-Lian-Yau-mirror-symmetry-for-double-cover-calabi-yau-varieties}*{Proposition 1.2}}, 
\begin{equation*}
\mathrm{vol}_{r+n}(A_{\mathrm{ext}}) = \chi (X) = 
\dim \mathrm{H}^{\bullet} (X,\mathbb{C})
\end{equation*}
since \(X\) is a smooth toric variety.
\end{proof}

\begin{remark}
For odd \(n\), from the proof of 
\cite{2020-Hosono-Lee-Lian-Yau-mirror-symmetry-for-double-cover-calabi-yau-varieties}*{Theorem 2.2},
we know \(\chi(Y^{\vee})=\chi(X^{\vee})-\chi(X)\). Also from [loc.~cit., Theorem 2.1], we have
\begin{equation*}
\dim\mathrm{H}^{p,q}(Y^{\vee},\mathbb{C})=\dim\mathrm{H}^{p,q}(X^{\vee},\mathbb{C})~\mbox{for}~p+q\ne n.
\end{equation*}
Since \(X^{\vee}\) is also a smooth toric variety, it follows that 
\(\dim\mathrm{H}^{n}(Y^{\vee},\mathbb{C})=\chi(X)=\mathrm{vol}_{r+n}(A_{\mathrm{ext}})\). 
Together with Remark \ref{remark:period-cycle}, it suggests that 
the affine periods are all the solutions to the GKZ system.
\end{remark}

\bibliographystyle{amsxport}
\begin{bibdiv}
\begin{biblist}
\bib{1993-Batyrev-variations-of-the-mixed-hodge-structure-of-affine-hypersurfaces-in-algebraic-tori}{article}{
      author={Batyrev, Victor~V.},
       title={Variations of the mixed {H}odge structure of affine hypersurfaces
  in algebraic tori},
        date={1993},
        ISSN={0012-7094},
     journal={Duke Math. J.},
      volume={69},
      number={2},
       pages={349\ndash 409},
         url={https://mathscinet.ams.org/mathscinet-getitem?mr=1203231},
}

\bib{1994-Batyrev-dual-polyhedra-and-mirror-symmetry-for-calabi-yau-hypersurfaces-in-toric-varieties}{article}{
      author={Batyrev, Victor~V.},
       title={{Dual polyhedra and mirror symmetry for Calabi--Yau hypersurfaces
  in toric varieties}},
        date={1994},
     journal={Journal of Algebraic Geometry},
      volume={3},
       pages={493\ndash 545},
}

\bib{1996-Batyrev-Borisov-on-calabi-yau-complete-intersections-in-toric-varieties}{incollection}{
      author={Batyrev, Victor~V.},
      author={Borisov, Lev~A.},
       title={On {C}alabi--{Y}au complete intersections in toric varieties},
        date={1996},
   booktitle={Higher-dimensional complex varieties ({T}rento, 1994)},
   publisher={de Gruyter, Berlin},
       pages={39\ndash 65},
}

\bib{1994-Bershadsky-Cecotti-Ooguri-Vafa-kodaira-spencer-theory-of-gravity-and-exact-results-for-quantum-string-amplitudes}{article}{
      author={Bershadsky, M.},
      author={Cecotti, S.},
      author={Ooguri, H.},
      author={Vafa, C.},
       title={Kodaira-{S}pencer theory of gravity and exact results for quantum
  string amplitudes},
        date={1994},
        ISSN={0010-3616},
     journal={Comm. Math. Phys.},
      volume={165},
      number={2},
       pages={311\ndash 427},
  url={https://mathscinet-ams-org.ezp-prod1.hul.harvard.edu/mathscinet-getitem?mr=1301851},
}

\bib{2006-Borisov-Horja-mellin-barnes-integrals-as-fourier-mukai-transforms}{article}{
      author={Borisov, Lev~A.},
      author={Horja, R.~Paul},
       title={Mellin-{B}arnes integrals as {F}ourier-{M}ukai transforms},
        date={2006},
        ISSN={0001-8708},
     journal={Advances in Mathematics},
      volume={207},
      number={2},
       pages={876\ndash 927},
}

\bib{1991-Candelas-de-la-Ossa-Green-Parkes-a-pair-of-calabi-yau-manifolds-as-an-exactly-soluable-superconformal-theory}{article}{
      author={Candelas, Philip},
      author={de~la Ossa, Xenia~C.},
      author={Green, Paul~S.},
      author={Parkes, Linda},
       title={A pair of {C}alabi--{Y}au manifolds as an exactly soluble
  superconformal theory},
        date={1991},
        ISSN={0550-3213},
     journal={Nuclear Phys. B},
      volume={359},
      number={1},
       pages={21\ndash 74},
         url={https://mathscinet.ams.org/mathscinet-getitem?mr=1115626},
}

\bib{1990-Gelfand-Kapranov-Zelevinsky-generalized-euler-integrals-and-a-hypergeometric-functions}{article}{
      author={Gel\cprime~fand, I.~M.},
      author={Kapranov, M.~M.},
      author={Zelevinsky, A.~V.},
       title={Generalized {E}uler integrals and {$A$}-hypergeometric
  functions},
        date={1990},
        ISSN={0001-8708},
     journal={Adv. Math.},
      volume={84},
      number={2},
       pages={255\ndash 271},
         url={https://mathscinet.ams.org/mathscinet-getitem?mr=1080980},
}

\bib{1989-Gelfand-Zelevinskii-Kapranov-hypergeometric-functions-and-toric-varieties}{article}{
      author={Gel\cprime~fand, I.~M.},
      author={Zelevinski\u{\i}, A.~V.},
      author={Kapranov, M.~M.},
       title={Hypergeometric functions and toric varieties},
        date={1989},
        ISSN={0374-1990},
     journal={Akademiya Nauk SSSR. Funktsional\cprime ny\u{\i} Analiz i ego
  Prilozheniya},
      volume={23},
      number={2},
       pages={12\ndash 26},
}

\bib{1998-Givental-a-mirror-theorem-for-toric-complete-intersections}{incollection}{
      author={Givental, Alexander},
       title={A mirror theorem for toric complete intersections},
        date={1998},
   booktitle={Topological field theory, primitive forms and related topics
  ({K}yoto, 1996)},
      series={Progr. Math.},
      volume={160},
   publisher={Birkh\"{a}user Boston, Boston, MA},
       pages={141\ndash 175},
         url={https://mathscinet.ams.org/mathscinet-getitem?mr=1653024},
}

\bib{1990-Greene-Plesser-duality-in-calabi-yau-moduli-space}{article}{
      author={Greene, B.~R.},
      author={Plesser, M.~R.},
       title={Duality in {C}alabi--{Y}au moduli space},
        date={1990},
     journal={Nuclear Physics B},
      volume={338},
       pages={15\ndash 37},
}

\bib{1995-Hosono-Klemm-Theisen-Yau-mirror-symmetry-mirror-map-and-applications-to-complete-intersection-calabi-yau-spaces}{article}{
      author={Hosono, S.},
      author={Klemm, A.},
      author={Theisen, S.},
      author={Yau, S.-T.},
       title={Mirror symmetry, mirror map and applications to complete
  intersection {C}alabi-{Y}au spaces},
        date={1995},
        ISSN={0550-3213},
     journal={Nuclear Phys. B},
      volume={433},
      number={3},
       pages={501\ndash 552},
         url={https://mathscinet.ams.org/mathscinet-getitem?mr=1319280},
}

\bib{2000-Hosono-local-mirror-symmetry-and-type-IIA-monodromy-of-calabi-yau-manifolds}{article}{
      author={Hosono, Shinobu},
       title={{Local mirror symmetry and type IIA monodromy of Calabi--Yau
  manifolds}},
        date={2000},
     journal={Advances in Theoretical and Mathematical Physics},
      volume={4},
      number={2},
       pages={335\ndash 376},
}

\bib{2020-Hosono-Lee-Lian-Yau-mirror-symmetry-for-double-cover-calabi-yau-varieties}{article}{
      author={Hosono, Shinobu},
      author={Lee, Tsung-Ju},
      author={Lian, Bong~H.},
      author={Yau, Shing-Tung},
       title={Mirror symmetry for double cover {C}alabi--{Y}au varieties},
        date={2020-03},
      eprint={2003.07148},
}

\bib{2020-Hosono-Lian-Takagi-Yau-k3-surfaces-from-configurations-of-six-lines-in-p2-and-mirror-symmetry-i}{article}{
      author={Hosono, Shinobu},
      author={Lian, Bong~H.},
      author={Takagi, Hiromichi},
      author={Yau, Shing-Tung},
       title={K3 surfaces from configurations of six lines in {$\mathbb{P}^2$}
  and mirror symmetry {I}},
        date={2020},
        ISSN={1931-4523},
     journal={Communications in Number Theory and Physics},
      volume={14},
      number={4},
       pages={739\ndash 783},
}

\bib{1996-Hosono-Lian-Yau-gkz-generalized-hypergeometric-systems-in-mirror-symmetry-of-calabi-yau-hypersurfaces}{article}{
      author={Hosono, Shinobu},
      author={Lian, Bong~H.},
      author={Yau, Shing-Tung},
       title={G{KZ}-generalized hypergeometric systems in mirror symmetry of
  {C}alabi--{Y}au hypersurfaces},
        date={1996},
        ISSN={0010-3616},
     journal={Communications in Mathematical Physics},
      volume={182},
      number={3},
       pages={535\ndash 577},
}

\bib{1997-Hosono-Lian-Yau-maximal-degeneracy-points-of-gkz-systems}{article}{
      author={Hosono, Shinobu},
      author={Lian, Bong~H.},
      author={Yau, Shing-Tung},
       title={{Maximal degeneracy points of GKZ systems}},
        date={1997April},
     journal={Journal of the American Mathematical Society},
      volume={10},
      number={2},
       pages={427\ndash 443},
}

\bib{2019-Hosono-Lian-Yau-k3-surfaces-from-configurations-of-six-lines-in-p2-and-mirror-symmetry-ii-lambda-k3-functions}{article}{
      author={Hosono, Shinobu},
      author={Lian, Bong~H},
      author={Yau, Shing-Tung},
       title={{K3 surfaces from configurations of six lines in
  \(\mathbb{P}^{2}\) and mirror symmetry II -- \(\lambda_{K3}\)-functions}},
        date={201911},
        ISSN={1073-7928},
     journal={International Mathematics Research Notices},
         url={https://doi.org/10.1093/imrn/rnz259},
}

\bib{1991-Sturmfels-grobner-bases-of-toric-varieties}{article}{
      author={Sturmfels, Bernd},
       title={Gr\"{o}bner bases of toric varieties},
        date={1991},
        ISSN={0040-8735},
     journal={Tohoku Math. J. (2)},
      volume={43},
      number={2},
       pages={249\ndash 261},
  url={https://mathscinet-ams-org.ezp-prod1.hul.harvard.edu/mathscinet-getitem?mr=1104431},
}

\bib{1996-Sturmfels-grobner-bases-and-convex-polytopes}{book}{
      author={Sturmfels, Bernd},
       title={Gr\"{o}bner bases and convex polytopes},
      series={University Lecture Series},
   publisher={American Mathematical Society, Providence, RI},
        date={1996},
      volume={8},
        ISBN={0-8218-0487-1},
  url={https://mathscinet-ams-org.ezp-prod1.hul.harvard.edu/mathscinet-getitem?mr=1363949},
}

\end{biblist}
\end{bibdiv}
\end{document}